\DeclareMathOperator{\tr}{tr}
\DeclareMathOperator{\Sec}{Sec}
\DeclareMathOperator{\ric}{Ric}
\DeclareMathOperator{\St}{\widetilde{\Sigma}}
\newcommand{\Mt}{\widetilde{M}}
\DeclareMathOperator{\sym}{Sym}
\DeclareMathOperator{\grad}{grad}
\DeclareMathOperator{\diver}{div}
\DeclareMathOperator{\vol}{Vol}
\DeclareMathOperator{\spec}{spec}
\DeclareMathOperator{\id}{id}
\DeclareMathOperator{\riem}{Riem}
\newcommand{\dd}{\partial}
\newcommand{\mf}{\mathfrak}
\newcommand{\bo}{\mathring{b}}
\newcommand{\wtp}{\widetilde{\Psi}}
\newcommand{\gp}{g^+}
\newcommand{\mfr}{\mathfrak{R}}
\newcommand{\mfl}{\mathfrak{L}}
\newcommand{\mfh}{\mathfrak{H}}
\newcommand{\spheres}{\mathbb{S}}
\newcommand\sbt{
    \raisebox{.4ex}{\scalebox{0.4}{$\bullet$}}
}
\newtheorem{proposition}{Proposition}[section]
\newtheorem{lemma}[proposition]{Lemma}
\newtheorem{theorem}[proposition]{Theorem}
\newtheorem{corollary}[proposition]{Corollary}
\title[Volume renormalization in higher codimension]{Volume renormalization of higher-codimension singular Yamabe spaces}
\author{Sri Rama Chandra Kushtagi}
\address{Dept. of Mathematical Sciences, FO 35\\University of Texas at Dallas\\800 W. Campbell Road\\Richardson, TX 75080}
\email{SriRamaChandra.Kushtagi@utdallas.edu}
\author{Stephen E. McKeown}
\address{Dept. of Mathematical Sciences, FO 35\\University of Texas at Dallas\\800 W. Campbell Road\\Richardson, TX 75080}
\email{stephen.mckeown@utdallas.edu}
\urladdr{www.utdallas.edu/$\sim$sxm190098}
\keywords{Singular Yamabe problem, global conformal invariants, renormalized volume, extrinsic conformal geometry}
\subjclass[2020]{Primary: 53C40, 53C18; Secondary: 53A55, 58J90}
\begin{document}
\begin{abstract}
	Given an embedded closed submanifold $\Sigma^n$ in the closed Riemannian manifold $M^{n + k}$, where $k < n + 2$,
	we define extrinsic global conformal invariants of $\Sigma$ by renormalizing the volume
	associated to the unique singular Yamabe metric with singular set $\Sigma$. In case $n$ is odd, the renormalized volume is an absolute conformal invariant, while if $n$ is even, there is a conformally invariant
	energy term given by the integral of a local Riemannian submanifold invariant. In particular, the renormalized volume gives a global conformal invariant of a knot embedding in the three-sphere. We compute the variations
	of these quantities with respect to variations of the submanifold. We extend the construction of energies for even
	$n$ to general codimension by considering formal solutions to the singular Yamabe problem; except that, for each fixed $n$, there are finitely many $k \geq n + 2$, which we identify, for which the smoothness of
	the formal solution is obstructed and we obtain instead a pointwise conformal invariant. We compute the new quantities in several cases.
\end{abstract}

\maketitle

\section{Introduction}
Suppose that $\Sigma^{n}$ is a closed embedded submanifold of the closed conformal manifold $(M^{n + k},[g])$, with $n - k + 2 > 0$. Recall that the \emph{singular Yamabe problem} asks for a conformal factor $u > 0$
so that the metric $u^{-2}g$ is complete on $M \setminus \Sigma$ and has constant scalar curvature.
In this paper, we use the unique solution of the singular Yamabe problem and the renormalized volume
construction to introduce (extrinsic) global conformal invariants of the pair $(M,\Sigma)$. Subject to the dimensional constraint, our quantities are always defined and conformally invariant; in case $n$ is odd, they are
global in $M$ and are new; in case $n$ is even, they are the integral over $\Sigma$ of an extrinsic Riemannian invariant, and are of similar type to the energies seen in \cite{gw99}.
The invariance of our volumes if $n$ is odd is rather surprising, as such behavior had not previously been noticed in the codimension-one case of the singular Yamabe problem; but see below. We also compute the derivative
of the variation of these quantities with respect to variations of $\Sigma$ in $M$. In case $n$ is even, the variation is the log-term coefficient in the expansion of $u$ itself, while if $n$ is odd, it is a conformally
invariant nonlocal term in the expansion.

The energy term when $n$ is even can be recovered in most cases even if $n - k + 2 \leq 0$. Although the solution to the singular Yamabe problem may no longer exist in that case, and it is not unique when it does,
we can still consider \emph{formal} solutions to the problem. These always exist, and they are smooth enough for our construction to work -- and give an invariant global energy --
except in a few exceptional codimensions for each dimension. In the latter cases, the formal expansion of the singular Yamabe function contains log terms that disrupt the construction of the energy; but their 
coefficients are themselves pointwise conformal invariants. In the critical case $k = n + 2$, both phenomena occur: there is an extrinsic pointwise conformal invariant of the correct weight so that its integral over $\Sigma$ is
a global invariant. The formal construction works in the case of odd $n$ as well, and renormalized volumes can be defined; but there is no longer a way to pick an invariant global singular Yamabe metric on $M$, so these
are essentially meaningless in terms of the geometry. In a few exceptional codimensions, pointwise extrinsic conformal invariants are obtained for odd $n$ as well.

The renormalized volume was introduced in conformal geometry in the setting of so-called Poincar\'{e}-Einstein (PE) metrics \cite{hs98,g99}, having been proposed by physicists
interested in the AdS/CFT correspondence \cite{mal98,wit98}. An asymptotically hyperbolic (AH) metric is a metric $g^+$ on the interior of a compact manifold
$X^{n + 1}$ with boundary $M^n = \partial X$ satisfying two conditions: if $\varphi \in C^{\infty}(X)$ is a defining function for $M$ -- that is, a function such that $M = \varphi^{-1}\left( \left\{ 0 \right\} \right)$
and $d\varphi|_{M}$ is nonvanishing -- then we require that (1) $\varphi^2g^+$ extends to a (say) $C^3$ metric on $X = \overline{X}$ and (2) $|d\varphi|_{\varphi^2g^+} \equiv 1$ along $M$. An AH metric is Poincar\'{e}-Einstein
if it additionally satisfies the Einstein condition $\ric(g^+) + ng^+ = 0$; or, more generally, $\ric(g^+) + ng^+ = O(\varphi^{\infty})$ (in which case, we might also call it a formally Poincar\'{e}-Einstein metric, if the
distinction is to be emphasized).
Now, among the defining functions, a particularly important class is the geodesic or special defining functions, which exist for any AH manifold. Let $[h] = [\varphi^2g|_{TM}]$ be the conformal class induced on $M$ by $g^+$
(called the \emph{conformal infinity}). By a result in \cite{gl91} (see \cite{g99}), for any $h \in [h]$ and $\varepsilon > 0$ small, there exists a unique diffeomorphism
$\psi:[0,\varepsilon)_r \times M \hookrightarrow X$ onto a neighborhood of $M$ in $X$ such that $\psi^*g^+ = \frac{dr^2 + g_r}{r^2}$, with $g_r$ a one-parameter family of metrics on $M$ and $g_0 = h$. For $g^+$ a
PE metric, it follows from the Einstein equations (see \cite{g99,cdls05}) that $g_r$ has an even expansion in $r$ up to order $n$:
\begin{equation}\label{eingexpeq}
	\begin{split}
		g_r = h + r^2g^{(2)} + r^{4}g^{(4)} +& \cdots + r^{n - 1}g^{(n - 1)}\\
		&+ r^{n}(\log r)K + r^ng^{(n)} + O(r^{n + 1}),
	\end{split}
\end{equation}
with each $g^{(i)}$ and $K$ a symmetric two-tensor on $M$. Moreover, if $n$ is odd, then $K = 0$ and $\tr_hg^{(n)} = 0$, while if $n$ is even, then $g^{(n - 1)} = 0$.

Now, the volume of $(X,g^+)$ is of course infinite, as $g$ blows up to second order at $M$. One may consider, however, the finite volume $\vol_{g^+}\left( \left\{ r > \varepsilon \right\} \right)$, where
$g^+$ is PE and $r$ a special defining function; and expanding this in $\varepsilon$, one finds (\cite{g99}) 
\begin{equation}\label{ahvolexpeq}
	\begin{split}
		\vol_{g^+}\left( \left\{ r > \varepsilon \right\} \right) = c_0\varepsilon^{-n} + c_1\varepsilon^{1 - n} +& \cdots + c_{n - 1}\varepsilon^{-1}\\
		&+ \mathcal{E}\log\left( \frac{1}{\varepsilon} \right) + V + o(1).
	\end{split}
\end{equation}
However, due to the parity properties of (\ref{eingexpeq}), one can show that $c_j = 0$ for all odd $j$. Moreover, if $n$ is odd, then $\mathcal{E} = 0$ and $V$ is a global conformal invariant in the sense that it does not
depend on which geodesic defining function is chosen. On the other hand, if $n$ is even, then $\mathcal{E}$ is a global conformal invariant in the same sense
and, in addition, is the integral over $M$ of a local Riemannian invariant. 
These invariants have been central objects of study in the theory of PE metrics, and even conformal geometry broadly, since their introduction \cite{g99,and01,cqy08,alexakis12,
gms12,gw99}. According to the Fefferman-Graham
holographic procedure \cite{fg85,fg12}, one can realize any conformal manifold $(M^n,[h])$ as the conformal infinity of a (formal) PE metric $(X^{n + 1},g^+)$, so the invariants can naturally be used to study $M$ and the relationship
between $M$ and $X$. In odd boundary dimensions, of course, $V$ will depend on the specific global metric $g^+$ chosen, and so is most interesting when a globally Einstein metric is available.

Barely after the introduction of renormalized volume, Graham and Witten introduced the concept of renormalized \emph{area} to study the extrinsic conformal geometry of submanifolds $\Sigma \subset M$ \cite{gw99}. 
Changing dimensional notation
from the prior paragraph to match our usage in the rest of the paper, suppose $\Sigma^n \subset M^{n + k}$ is a closed submanifold of codimension $k$ in the conformal manifold $M$. Let $M$ be realized as the conformal
infinity of a formal PE metric $(X^{n + k + 1},g^+)$. One can then (at least formally) realize $\Sigma$ as the boundary of a complete minimal submanifold $Y^{n + 1} \subset X$, which itself is AH with respect to the induced
metric.  One can renormalize the area of $Y$ with respect to the induced metric, in exactly the same way as the volume above. Once again, if $n$ is odd, the area is a global invariant of $Y$, while if $n$ is even,
a logarithmic term appears in the expansion of the area, which is conformally invariant and is the integral over $\Sigma$ of an \emph{extrinsic} Riemannian invariant term. In case $n = 2$, this is the Willmore invariant; thus,
the even-dimensional cases of the Graham-Witten result may be seen as generalizing the Willmore invariants. Once again, in the odd-dimensional case, the renormalized area depends on the global metric $g^+$ as well as the
global submanifold $Y$. For more recent work related to this construction, see \cite{am10,gr20,tyr22,cgktw24}.

The renormalized volume construction was significantly extended in \cite{g17,gw17} to the setting of singular Yamabe metrics. We here recall this theory. Suppose $(M^{n + 1},g)$ is a Riemannian metric on a manifold
$M$ with boundary $\Sigma^n$, and let $r$ be the distance function to $\Sigma$ in $M$.
The singular Yamabe (or Loewner-Nirenberg) problem asks whether there is a defining function $u$ for $\Sigma$ so that the complete metric $g^+ = u^{-2}g$ on $\mathring{M}$ has constant scalar curvature
$-n(n + 1)$. The problem was originally posed in \cite{ln74}, and by \cite{ln74,am88,acf92}, a solution always exists and is unique; moreover, by \cite{maz91sy}, it is polyhomogeneous, i.e., it is smooth in
$r$ and terms of the form $r^k(\log r)^j$. By uniqueness, it follows that $u$ is conformally invariant: if $\hat{g} = e^{2\omega}g$, then $\hat{u} = e^{\omega}u$. This, and the fact that $u$ is polyhomogeneous
and (thus) has a well-defined and locally determined expansion to order $n + 1$ at the boundary has led in recent years to the problem's heavy use as a means to study the extrinsic conformal geometry of the embedding
$\Sigma \hookrightarrow M$ \cite{gw15,gp18,jo22,cmy22}. 
Now, $g^+$ is actually an AH metric, and in \cite{g17,gw17}, it was realized that one could study the volume $\vol_{g^+}\left( \left\{ r > \varepsilon \right\} \right)$ and get an expansion
like (\ref{ahvolexpeq}). In this instance, no interesting parity properties of $u$ were observed, and none of the $c_i$ vanishes. Moreover, $V$ is generically not conformally invariant in any dimension: if the volume is
expanded with respect to the distance function $\hat{r}$ corresponding to $\hat{g}$, then $\widehat{V}$ may not equal $V$. However, the energy
$\mathcal{E}$ is always conformally invariant (and generically nonzero), and also gives the Willmore energy in case $n = 2$. There has been a great deal of study of these quantities in recent years
(\cite{gg19,gw19,cmy22,jo22}). We note that the paper \cite{gw17} also considers \emph{interior} hypersurfaces of a closed manifold, but
still focuses on the expansion of $u$ on one side of the hypersurface. 

The singular Yamabe problem has actually been posed, and solved, for singular sets of codimension higher than one. Suppose $\Sigma^n$ is a closed, embedded submanifold of the compact Riemannian manifold
$(M^{n + k},g)$; and suppose $n - k + 2 > 0$. Let $t$ be the distance function to $\Sigma$ with respect to $g$. (We use $t$ to emphasize that this distance, unlike $r$, is not smooth.)
By \cite{am88}, there always exists a unique $u \geq 0$ satisfying $u^{-1}\left( \left\{ 0 \right\} \right) = \Sigma$ such that the complete metric $g^+ = u^{-2}g$ has constant negative scalar curvature
$-(n - k + 2)(n + k - 1)$. 

Moreover, by \cite{maz91sy}, $u$ has a polyhomogeneous expansion in $t$. Recall that the normal exponential map of $\Sigma$ induces, for some neighborhood $U$ of $\Sigma$, a diffeomorphism
$U \setminus \Sigma \approx (0,\varepsilon) \times SN\Sigma$, where the latter factor is the normal sphere bundle over $\Sigma$. Then the result of \cite{maz91sy} may be stated as saying that $u$ has
an expansion of the form 
\begin{equation}\label{polyhom}
	\begin{split}
		u =  t + t^2u_2 +& \cdots + t^{n + 1}u_{n + 1} + t^{\gamma}u_{\gamma}\\
		&+ t^{n + 2}(\log t)\mathcal{L} + t^{n + 2}u_{n + 2} + o(t^{n + 2}),\\
	\end{split}
\end{equation}
with $\gamma \in (n + 1,n + 2)$ determined by $n$ and $k$, and with the $u_j$ and $\mathcal{L}$ smooth on $SN\Sigma$; and indeed, the expansion may be continued in powers of $t$ and $\log t$ to arbitrarily high order.

In this paper, we study the renormalized volume of higher-codimension singular Yamabe metrics $g^+$ by considering the expansion $\vol_{g^+}\left( \left\{ t > \varepsilon \right\} \right)$ in $\varepsilon$.
More specifically, we study the expansion
\begin{equation}\label{volexpintroeq}
	\begin{split}
		\vol_{g^+}\left( \left\{ t > \varepsilon \right\} \right) = c_0\varepsilon^{-n} + c_1\varepsilon^{1 - n} +& \cdots + c_{n - 1}\varepsilon^{-1}\\
		&+ \mathcal{E}_{n,k}\log\left( \frac{1}{\varepsilon} \right) + V_{n,k} + o(1).
	\end{split}
\end{equation}
The somewhat surprising result is that this problem manifests the same parity-dependence as in the Einstein case, as distinguished from the codimension-one singular Yamabe case on a manifold with boundary.
We will discuss this more below. We now state our main result. 
\begin{theorem}\label{nicecase}
	Suppose $(M^{n + k},[g])$ is a closed conformal manifold and $\Sigma^n$ is a closed, embedded submanifold, with $n - k + 2 > 0$. 
	Suppose $g^+ = u^{-2}g$ is the unique singular Yamabe metric satisfying the condition $R_{g^+} = (k - n - 2)(n + k - 1)$.
	
	Then in the expansion (\ref{volexpintroeq}), $c_j = 0$ for odd $j$. 
	In case $n$ is odd, $\mathcal{E}_{n,k} = 0$ and $V_{n,k}$ is a conformal invariant. In case $n$ is even, $\mathcal{E}_{n,k}$ is a conformal invariant, 
	and is the integral over $\Sigma$ of a local extrinsic Riemannian invariant.
\end{theorem}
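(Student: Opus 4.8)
The plan is to push everything onto the unit normal sphere bundle $SN\Sigma$ of $\Sigma$ and reduce the expansion (\ref{volexpintroeq}) to one-dimensional integrals in $t$. Working in Fermi coordinates, write the normal variable as $x = t\omega$ with $\omega \in S^{k - 1}$, so that $\sqrt{\det g}\,dx\,dV_\Sigma = t^{k - 1}J(y,\omega,t)\,dt\,d\omega\,dV_\Sigma$, where $J = \sum_{p \geq 0}J_p(y,\omega)t^p$ and each $J_p$ is the restriction to the fibre sphere of a homogeneous polynomial of degree $p$ in $\omega$, so that $J_p(y,-\omega) = (-1)^pJ_p(y,\omega)$. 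Since $\gp = u^{-2}g$, the $\gp$-volume density is $u^{-(n + k)}t^{k - 1}J = t^{-n - 1}\sum_{j \geq 0}A_j(y,\omega)t^j$, with $A_j = \sum_{p + q = j}W_qJ_p$ and $W_q$ the order-$q$ coefficient of $(u/t)^{-(n + k)}$, a universal polynomial in $u_2,\dots,u_{q + 1}$. Splitting $\vol_{\gp}(\{t > \varepsilon\})$ into the integral over a tube $\{\varepsilon < t < T\}$ and a finite $\varepsilon$-independent remainder, and integrating each $t^{j - n - 1}$ over $(\varepsilon,T)$, one reads off directly that, writing $b_j := \int_{SN\Sigma}A_j\,d\omega\,dV_\Sigma$, one has $c_i = b_i/(n - i)$ for $i < n$, $\mathcal{E}_{n,k} = b_n$, and $V_{n,k}$ the resulting constant term. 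Thus $c_i = 0 \iff b_i = 0$, and everything reduces to the parity of the $b_j$.

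The crux is the following parity lemma: $u_m$, viewed as a function on $SN\Sigma$, has parity $(-1)^{m - 1}$ under the fibrewise antipodal map $\omega \mapsto -\omega$; equivalently, $u_m$ is a sum of spherical harmonics of degree congruent to $m - 1$ modulo $2$. I would prove this by induction on $m$ from the singular Yamabe equation, which in terms of $u$ reads
\[
	u^2R_g + 2(n + k - 1)u\Delta_gu - (n + k)(n + k - 1)|\nabla u|_g^2 = (k - n - 2)(n + k - 1).
\]
Inserting (\ref{polyhom}) and matching the coefficient of $t^{m - 1}$ yields an equation of the form $I_m[u_m] = F_m$, where the indicial operator $I_m$ acts on each fibre as $m^2 - (n + 2)m + (k - 1) + \Delta_{S^{k - 1}}$ and the forcing $F_m$ is a universal expression in $u_2,\dots,u_{m - 1}$ and the normal Taylor coefficients of $g$. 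Since $\Delta_{S^{k - 1}}$ is diagonal on spherical harmonics it preserves parity; and a term-by-term accounting shows every contribution to $F_m$ carries parity $(-1)^{m - 1}$ (a degree-$p$ metric coefficient carries parity $(-1)^p$, and a product $u_iu_{i'}$ entering at the relevant order has $i + i'$ of the correct parity). Because $n - k + 2 > 0$, the relevant indicial roots are excluded through order $n + 1$, so $I_m$ is invertible on the parity sector in question and the solution $u_m$ inherits parity $(-1)^{m - 1}$; the base case is $u_1 = 1$. This is precisely the mechanism that is unavailable in codimension one, where the fibre is $S^0$ and offers no averaging.

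Granting the lemma, $W_q$ has parity $(-1)^q$, hence $A_j$ has parity $(-1)^j$; since an odd function integrates to zero over $S^{k - 1}$, we get $b_j = 0$ for every odd $j$, and therefore $c_j = 0$ for odd $j$. If $n$ is odd then $A_n$ is odd, so $\mathcal{E}_{n,k} = b_n = 0$. If $n$ is even then $\mathcal{E}_{n,k} = b_n = \int_{SN\Sigma}A_n$; because $A_n$ is a universal polynomial in the jets of $g$ and the second fundamental form of $\Sigma$, integrating over the fibre spheres produces a local extrinsic Riemannian invariant density on $\Sigma$ whose integral is $\mathcal{E}_{n,k}$.

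For conformal invariance the structural point is that $\gp$ is itself a conformal invariant of $(M,[g],\Sigma)$: if $\hat{g} = e^{2\omega}g$ then $\hat{u} = e^{\omega}u$, whence $\hat{g}^+ = \hat{u}^{-2}\hat{g} = u^{-2}g = \gp$. Consequently the only dependence of $\vol_{\gp}(\{t > \varepsilon\})$ on the representative is through the cutoff, i.e.\ through replacing the $g$-distance $t$ by the $\hat{g}$-distance $\hat{t} = e^{\omega|_\Sigma}t(1 + O(t))$. I would compare the two expansions by computing the $\gp$-volume of the region between the level sets $\{t = \varepsilon\}$ and $\{\hat{t} = \varepsilon\}$, exactly as in the Poincar\'{e}--Einstein renormalized-volume calculation. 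Since rescaling the cutoff by $\varepsilon \mapsto \lambda\varepsilon$ sends $\log(1/\varepsilon) \mapsto \log(1/\varepsilon) - \log\lambda$, the $\log$-coefficient $\mathcal{E}_{n,k}$ is left unchanged when $n$ is even, while the constant term is shifted by a local anomaly; when $n$ is odd, the vanishing $\mathcal{E}_{n,k} = 0$ removes the anomaly and forces $V_{n,k}$ to be invariant. The main obstacle throughout is the parity lemma: identifying the indicial operator at each order, checking that it preserves parity and is invertible on the relevant parity sector throughout the range $n - k + 2 > 0$, and controlling the polyhomogeneous log terms well enough to secure the odd-$n$ invariance of $V_{n,k}$.
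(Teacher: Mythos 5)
Your first half is essentially the paper's own argument in different clothing: the paper encodes your antipodal parity as membership in spaces $\mathcal{Y}_j(\widetilde{\Sigma})$ of spherical harmonics of degree $\leq j$ and fixed parity, proves exactly your parity lemma for the coefficients of $u/t$ by induction on the indicial equation (your $I_m$ agrees with the paper's $I_s$ under the shift $s = m-1$, and the root count under $n-k+2>0$ is the same), and then kills the odd fiber integrals just as you do. One remark: rather than your "term-by-term accounting" of the forcing $F_m$ — which would require tracking antipodal equivariance of the cylindrical metric coefficients $b^{(j)}, h^{(j)}, a^{(j)}$ and their inverses through derivatives and contractions — the paper gets the parity of $F_m$ from a cleaner observation (its Lemma 3.5): the map $v \mapsto L[tv]$ preserves smoothness on $M$, because $t\partial_t = \tfrac12\grad_g(t^2)$ is a smooth vector field and $t\Delta_g t = \diver_g(t\grad_g t) - 1$ is smooth. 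That device would save you real work.

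The genuine gap is in the conformal-invariance step, specifically the odd-$n$ invariance of $V_{n,k}$, which is the paper's headline claim. Your inference "rescaling the cutoff sends $\log(1/\varepsilon)\mapsto\log(1/\varepsilon)-\log\lambda$, so $\mathcal{E}_{n,k}=0$ removes the anomaly and forces $V_{n,k}$ invariant" is valid only for a \emph{constant} rescaling $\lambda$. The actual change of cutoff is $\hat{\varepsilon} = \varepsilon\,\Psi(x,\omega,\varepsilon)$, where $\Psi = \hat{t}/t$ varies over the normal sphere bundle and depends on $\varepsilon$; under such a change \emph{every} divergent term feeds into the constant term. Indeed the shift is
\begin{equation*}
	V_{n,k} - \widehat{V}_{n,k} = \sum_{j = 0}^{n - 1}\frac{1}{j - n}\int_{\widetilde{\Sigma}}\vartheta_jF_{n - j,n - j}\,dV_{\bo}dV_{h_0} - \int_{\widetilde{\Sigma}}\vartheta_n\,\omega_0\,dV_{\bo}dV_{h_0},
\end{equation*}
where $F_{j,m}$ is the order-$m$ Taylor coefficient of $\Psi^{-j}$ and $\omega_0 = \omega|_{\Sigma}$. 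Even granting $\mathcal{E}_{n,k} = 0$, the terms with $j < n$ have nothing to do with $\mathcal{E}_{n,k}$, and the last term need not vanish just because $\int_{\widetilde{\Sigma}}\vartheta_n = 0$ (since $\omega_0$ is not constant on $\Sigma$). Killing this shift for odd $n$ requires two inputs you do not supply: (1) that $\hat{t}/t$ extends to a \emph{smooth} function on $M$ — the paper's Theorem \ref{tsmooththm}, proved via an indicial analysis of the eikonal equation, and genuinely dependent on the metrics being conformally related (it fails, e.g., for $dx^2 + dx\,dy + dy^2$ versus the flat metric), so that its Taylor coefficients $F_{j,m}$ have parity $(-1)^m$; and (2) the resulting product-parity argument: $\vartheta_jF_{n-j,n-j}$ and $\vartheta_n\omega_0$ have parity $(-1)^n$ and hence integrate to zero over the fibers when $n$ is odd. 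Your parity machinery could deliver (2), but (1) is a missing idea, and the step as you state it would fail.
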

The first question this theorem raises is -- why should it be true? To see the essence, recall that we may locally identify a deleted neighborhood $U$ of $\Sigma$ with the product $\Sigma \times \spheres^{k - 1} \times (0,\delta)_t$
via the normal exponential map.
Suppose $f$ is a smooth function on $M$. Then we may expand
\begin{equation*}
	f = f_0 + tf_1 + t^2f_2 + \cdots + t^nf_n + O(t^{n + 1}),
\end{equation*}
where each $f_i$ is a function on $\Sigma \times \spheres^{k - 1}$. The crucial point is that under the antipodal map $T:\spheres^{k - 1} \to \spheres^{k - 1}$, each $f_i$ transforms by
$T^*f_i = (-1)^if_i$. The same principle applies to the expansion of a smooth volume form in appropriate coordinates. So upon integration over $\spheres^{k - 1}$, the odd terms all vanish. (Compare \cite[section 9.3]{grayTubes}.)
There is much to be done before simply applying this intuition to the singular volume form $dV_{g^+}$ and obtaining the theorem. 
For example, a good deal of our paper is devoted to showing that $\frac{u}{t}$ is actually smooth to order
$o(t^{n})$, and not merely polyhomogeneous; and smoothness is required for this argument to work.
But the essential idea is that the parity-dependence that appeared before integration in the Poincar\'{e}-Einstein case
appears \emph{upon} integration in our case.

It was pointed out to us by Robin Graham that this behavior \emph{could} have been (though was not) noticed even in the codimension-one case, had the hypersurface been considered as two-sided and embedded
in the interior. Suppose $\Sigma^n$ is a hypersurface of a closed $M^{n + 1}$, rather than a boundary. If the singular Yamabe problem is solved (on each side, if $\Sigma$ is dividing), the same
parity behavior emerges if the coefficients are viewed as functions on the unit normal bundle $\spheres^0$. Alternatively, one could consider a \emph{signed} distance function $r$ and consider the volume
$\vol_{g^+}(\left\{ |r| > \varepsilon \right\})$.

Several interesting features of Theorem \ref{nicecase} bear remark. First, in the special case $\spheres^1 \hookrightarrow \spheres^3$, the renormalized volume $V_{1,2}$ 
gives a global conformal invariant of a knot embedding. This appears to be the same
kind of quantity as the writhe of a knot \cite{o03}. 
Unlike the writhe, our volume does not vanish on the equatorial unknot.

In general, the renormalized volume $V_{n,k}$ for $n$ odd will be a well-defined global invariant, due to uniqueness of the singular Yamabe metric.

Next, it is interesting to note the relationship between our quantity and the Graham-Witten renormalized area. In the case of $n$ even, this gives an energy of precisely the same sort as the Graham-Witten energy --
a global conformal invariant given by the integral of an extrinsic local invariant. Exactly what the relationship is bears further study; see section \ref{calcsec} for the analysis when $n = 2$. 
In the case of $n$ odd, it is likewise of the same type, but with an important difference:
the Graham-Witten renormalized area is well-defined only upon choosing a PE metric (and a minimal $Y$). Generally, only formal PE metrics may exist, and there is no known method to choose a best one. On the other hand, the volume
defined here is always canonical.

In fact -- finally -- we believe that in the case of odd $n$,
the renormalized volume defined here is the first such example of a renormalized volume that is a canonical conformal invariant associated to the geometric data. The volume in the
(codimension-one, boundary) singular Yamabe setting is well-defined, but is not generally conformally invariant; while the Einstein and Graham-Witten renormalized volumes and areas are conformally invariant, but generally
depend on a choice of formal Einstein metric (unless a genuine Einstein metric is available and is unique).

We also compute the variations of the invariant quantities $\mathcal{E}_{n,k}$ and $V_{n,k}$. To state the results, we note first that the quantity $\mathcal{L}$ in (\ref{polyhom}) is actually the restriction
to the unit normal bundle of a \emph{linear} function on the entire normal bundle, which we refer to as $\Lambda\mathcal{L}$. This may be interpreted as a section
of $T^*M|_{\Sigma}$, i.e., a one-form. Moreover, it is conformally invariant of weight $-n$. (By raising an index, we get a conformally invariant normal vector field.)
A more subtle phenomenon arises regarding the term $u_{n + 2}$. It is \emph{not} the restriction to the normal bundle of a linear function, but in case $n$ is odd, its
linear \emph{part}, which is not locally determined, determines a conformally invariant one-form $\Lambda u_{n + 2}$ on $N\Sigma$ by the formula
\begin{equation}
	\label{uform}
	(\Lambda u_{n+2})_p(X) = \frac{k}{\vol(\spheres^{k - 1})}\int_{SN_p\Sigma}u_{n + 2}(Y)\langle X,Y\rangle dV_{\bo}(Y).
\end{equation} 
Here, $\bo$ is the metric on the unit normal sphere $SN_p\Sigma$ at a point $p \in \Sigma$. When $k = 1$, the integral is a sum over two points. See Lemma \ref{oneformlem} and section \ref{smoothsysec} for details.
\begin{theorem}
	\label{varthm}
	Let $M,[g],\Sigma, n$, and $k$ be as in Theorem \ref{nicecase}. Write $h_0 = g|_{T\Sigma}$.
	If $n$ is odd and $k > 1$, then we have $\mathcal{L} = 0$. Suppose $\mathcal{F}:(-\delta,\delta) \times \Sigma \hookrightarrow M$ is a smooth variation
	of $\Sigma$ with $\mathcal{F}(0,\cdot) = \id_{\Sigma}$. Let $X = \frac{d}{ds}\mathcal{F}(s,\cdot)|_{s = 0} \in \Gamma(\Sigma,N\Sigma)$ be the variation field. For each $s$, let 
	$\Sigma_s = \mathcal{F}(s,\Sigma)$, and let $\mathcal{E}_{n,k}(s)$ and
	$V_{n,k}(s)$ be the energy and volume corresponding to $(M,\Sigma_s)$. Let
	\begin{equation*}
		C_{n,k} = (-1)^n\frac{(n + k)(n^2 + n + 2k - 4)}{k(n - k + 2)}\vol(\spheres^{k - 1}).
	\end{equation*}
	If $n$ is even, then
	\begin{equation}
		\label{evenvar}
		\left.\frac{d}{ds}\mathcal{E}_{n,k}(s)\right|_{s = 0} = C_{n,k}\int_{\Sigma}(\Lambda\mathcal{L})(X)dV_{h_{0}}.
	\end{equation}
	If $n$ is odd, then
	\begin{equation}
		\left.\frac{d}{ds}V_{n,k}(s)\right|_{s = 0} = C_{n,k}\int_{\Sigma}(\Lambda u_{n + 2})(X)dV_{h_0}.
	\end{equation}
\end{theorem}
This result for even $n$ bears a strong resemblance to the variation formula given in \cite{g17} for the variation of the energy (in all dimensions) for the boundary singular Yamabe case, and (as already discussed there) to the
variation of the energy term in the Einstein case, which is given by the Fefferman-Graham obstruction tensor that forms the first logarithmic term in the expansion of the Einstein metric -- a multiple of $K$ in (\ref{eingexpeq}).
See \cite{gh05}. In the case of $n$ odd, it is analogous to the variation formula of Anderson \cite{and01} for the variation of the renormalized volume of Einstein metrics, 
where the variation is the nonlocal term in the expansion of the Einstein metric. See also \cite{alb09}.

We comment that Theorem \ref{varthm} puts constraints on conformal Killing fields that are transverse to $\Sigma$.

When $n - k + 2 \leq 0$, the theory of the solution to the singular Yamabe problem 
\begin{equation}\label{syhighh}
	R_{u^{-2}g} = (k - n - 2)(n + k - 1)
\end{equation}
becomes quite complicated, and several of the appealing properties such as uniqueness
are lost. It is known that the prescribed scalar curvature in these codimensions must be positive or zero \cite{am88,maz91sy}. We pass over the critical case for the time being and consider the case $n - k + 2 < 0$.  
Solutions to (\ref{syhighh}), when they exist, are spectacularly non-unique. Moreover, there exist many solutions that are not polyhomogeneous \cite{maz91sy}.
The most complete existence result to date is \cite{mp96}, which says that if $(M,[g])$ has nonnegative Yamabe constant, then there exists an infinite-dimensional family of polyhomogeneous solutions. Now,
in this case, the expansion generically includes terms of the form $t^{\gamma}u_{\gamma}$ for $\gamma \in (0,n+1) \setminus \mathbb{N}$.
These would alter the volume expansion (\ref{volexpintroeq}) by adding terms of non-integral powers in $\varepsilon$. With that modification, Theorem \ref{nicecase} would still otherwise hold
(including invariance of volume for odd $n$). However, due to the non-uniqueness of solutions, the volume would not be an interesting invariant of $(M,\Sigma,[g])$ unless one could choose a conformally invariant
condition to uniquely specify an existent singular Yamabe metric from all the possibilities. Given this,
we here consider instead \emph{formal} solutions $u$ having the expansion 
\begin{equation}\label{formaleq}
	u =  t + t^2u_2 + \cdots + t^{n + 1}u_{n + 1} + t^{n + 2}(\log t)\mathcal{L} + t^{n + 2}u_{n + 2} + o(t^{n + 2})
\end{equation}
and satisfying $R_{g^+}  = (k - n -  2)(n + k - 1) + O(t^{n + 2})$ (where, indeed, the expansion may be continued
in powers of $t$ and $\log t$ until the order power on the right-hand side is made as large as desired or even infinite). This works generically, but for some exceptional $k$, logarithmic terms appear before order
$t^{n + 2}$ and disrupt the construction.

We wish, then, to study the volume expansion (\ref{volexpintroeq}) for formal solutions (\ref{formaleq}) of the singular Yamabe equation.
As we will see, for even $n$ this will allow us to extend the existence
of the conformally invariant energies to most codimensions $k$.
We must be quite careful, however, because although such formal expansions exist generally, as already mentioned there are various values of $n$ and $k$ for which the ``indicial root'' (see section \ref{smoothsec}) becomes integral,
and logarithmic powers are introduced into the expansion at low orders, which complicates things significantly. Actually, and interestingly, when $n$ is even, half of the integral indicial roots do \emph{not} lead to logarithmic terms.
This may be seen as analogous to the PE setting for odd boundary dimension, where an indicial root leads to a loss of uniqueness, but not to a logarithmic term. When we do get a logarithm, we will not be able to run our volume
expansion to obtain an energy; but our recompense will be that the log coefficient is a pointwise conformal invariant. 

We summarize as briefly as possible the situation for formal expansions when $n - k + 2 \leq 0$. The parity of the order at which the formal expansion becomes undetermined turns out to be
decisive. Fix $n \geq 2$. Set $P_n(p) = n + 2 + 2np - 4p^2$ and $Q_n(p) = 2n + 1 + 2(n-2)p - 4p^2$. If $n$ is even, define
\begin{equation}\label{eveneo}
	\begin{split}
		E_n &= \left\{ P_n(p): p = 0,\cdots,\left\lfloor \frac{n}{4}\right\rfloor  \right\}\\
		O_n &= \left\{ Q_n(p) : p = 0,\cdots,\left\lfloor\frac{n}{4}\right\rfloor\right\}.
	\end{split}
\end{equation}
If $n$ is odd, define
\begin{equation}\label{oddeo}
	\begin{split}
		E_n &= \left\{ P_n(p): p = 1,\cdots,\left\lfloor \frac{n}{2}\right\rfloor \right\}\\
		O_n &= E_n \cup \left\{ n + 2 \right\}.
	\end{split}
\end{equation}
Following is a summary of the existence situation for formal expansions.
\begin{theorem}\label{expandthm}
	Let $n \geq 2$.
	\begin{enumerate}[(a)]
		\item\label{partfirst} Suppose that $k \notin (E_n \cup O_n)$. There exists a function 
			$u$ having the expansion (\ref{formaleq}) and satisfying the scalar curvature condition $R_{u^{-2}g} = (k - n - 2)(n + k - 1) + O(t^{n + 2})$.
			Such $u$ is unique mod $O(t^{n + 2})$. If $n$ is odd and $k > 1$, then $\mathcal{L} = 0$. In this case, the solution may be made unique mod $o(t^{n + 2})$ by imposing the conformally invariant
			condition that (\ref{uform}) vanish for all $p \in \Sigma$ and all $X \in N_p\Sigma$. If $n$ is even or $k = 1$, $\mathcal{L}$ is a pointwise conformal invariant of weight $-(n + 1)$.
		\item\label{partodd} Suppose $k \in O_n$. There exists $u$ having the expansion (\ref{formaleq}) and satisfying $R_{u^{-2}g} = (k - n - 2)(n + k - 1) + O(t^{n + 2})$.
		Such $u$ is unique mod $O(t^{n + 2})$ given the additional constraint that, for each positive 
		root $\nu$ to $Q_n(\nu) = k$, the coefficient $u_{\nu + 1}$ integrates to zero on
		each fiber of the normal sphere bundle to $\Sigma$; this is equivalent to requiring that $\frac{u}{t}$ be smooth on $M$ mod $o(t^n)$, and is thus a conformally invariant condition.
		If $n$ is odd and $k > 1$, then $\mathcal{L} = 0$, and the expansion may be made unique mod $o(t^{n + 2})$ by imposing the same conformally invariant condition as in \ref{partfirst}. If $n$ is even or $k = 1$, then
		$\mathcal{L}$ is a pointwise conformal invariant of weight $-(n + 1)$.
	\item\label{parteven} Suppose that $k \in E_n$. Let $\nu$ be the smallest positive root to $P_n(\nu) = k$.
			\begin{enumerate}[(i)]
				\item\label{kennotn2} If $\nu \neq \frac{n}{2}$, a formal solution $u$ exists of the form
	\begin{equation*}
		u = t + t^2u_2 + \cdots + t^{\nu}u_{\nu} + t^{\nu + 1}(\log t)A + t^{\nu + 1}u_{\nu + 1},
	\end{equation*}
	satisfying $R_{u^{-2}g} = (k - n - 2)(n + k - 1) + o(t^{\nu + 1})$. Such $u$ is unique to order $o(t^{\nu + 1})$ 
	if subjected to the requirement that the integral of $u_{\nu + 1}$ over each fiber of the normal sphere bundle to $\Sigma$ vanish; however, this is not a conformally invariant condition.
				\item\label{kenn2} If $\nu = \frac{n}{2}$, then $n \in 4\mathbb{N}$ and $k = \frac{n^2}{4} + n + 2$.  In this case, a formal solution $u$ exists of the form
	\begin{equation*}
		u = t + t^2u_2 + \cdots + t^{\frac{n}{2}}u_{\frac{n}{2}} + t^{\frac{n}{2} + 1}(\log t)^2A + t^{\frac{n}{2} + 1}u_{\frac{n}{2} + 1},
	\end{equation*}
	satisfying $R_{u^{-2}g} = (k - n - 2)(n + k - 1) + o(t^{\frac{n}{2} + 1})$. Uniqueness is the same as in the preceding case.
			\end{enumerate}
	Finally, in either of cases \ref{kennotn2} or \ref{kenn2},
	$A$ is a pointwise conformal invariant: if $\hat{g} = e^{2\omega}g$, then $\widehat{A} = e^{-\nu\omega}A$.
	\end{enumerate}
\end{theorem}
As discussed above and within, the term $\mathcal{L}$ when $n$ is even may be seen as defining a conformally invariant one-form on the normal bundle -- or, by raising an index, a conformally invariant normal vector field
along $\Sigma$. We also note that the critical case $n - k + 2 = 0$ (or $k = n + 2$) for $n$ even will always fall in part \ref{parteven}\ref{kennotn2} with $\nu = n$, and will thus
yield a pointwise conformal invariant of weight $-n$. Its integral will thus be a global conformal invariant.

We may now state our most general theorem on volume expansions; Theorem \ref{nicecase} is merely a corollary.
\begin{theorem}\label{mainthm}
	Suppose $(M^{n + k},[g])$ is a closed conformal manifold and $\Sigma^n$ is a closed, embedded submanifold, with $k \notin E_n$.
	Suppose that $g^+ = u^{-2}g$ is a formal singular Yamabe metric satisfying the condition $R_{g^+} = (k - n - 2)(n + k - 1) + O(t^{n + 1})$, and that $u$ satisfies (\ref{formaleq}). If $k \in O_n$, assume also that
	$u$ satisfies the smoothness conditions given in Theorem \ref{expandthm}\ref{partodd}.

	Then in the expansion (\ref{volexpintroeq}), $c_j = 0$ for odd $j$.
	Suppose $g, \hat{g} \in [g]$ are smooth metrics with distance functions $t$ and $\hat{t}$, respectively, and that the expansion (\ref{volexpintroeq}) is computed with respect to each.
	If $n$ is odd, then $\widehat{\mathcal{E}}_{n,k} = \mathcal{E}_{n,k} = 0$ and $\widehat{V}_{n,k} = V_{n,k}$. If $n$ is even, then $\widehat{\mathcal{E}}_{n,k} = \mathcal{E}_{n,k}$. In the latter case,
	$\mathcal{E}_{n,k}$ is the integral over $\Sigma$ of a local extrinsic Riemannian invariant, and is also independent of the formal singular Yamabe metric $g^+$ conformal to $g$.
\end{theorem}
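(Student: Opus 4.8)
The plan is to reduce every assertion to the antipodal parity principle sketched after the statement of Theorem \ref{nicecase}, applied not to $u$ itself but to the full singular volume density after pulling back by the normal exponential map $E\colon N\Sigma\to M$. Writing a normal vector as $v=t\theta$ with $t=|v|_g$ and $\theta\in\spheres^{k-1}$, I would express $dV_{g^+}=u^{-(n+k)}\,dV_g$ in polar Fermi coordinates $(y,\theta,t)$. Since $E^*dV_g=t^{k-1}J_g\,dt\,d\theta\,dy$ with $J_g$ the Jacobian of $E$ — a genuinely smooth function of $v$ — and since $u^{-(n+k)}=t^{-(n+k)}(u/t)^{-(n+k)}$, I obtain $dV_{g^+}=t^{-(n+1)}W\,dt\,d\theta\,dy$ with $W=(u/t)^{-(n+k)}J_g$. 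The crucial input is the smoothness result of section \ref{smoothsec}: because $k\notin E_n$ (and the fiber-integral conditions hold when $k\in O_n$), $u/t$ extends to a function smooth in $v$ to order $o(t^n)$, so $W$ is smooth in $v$ to that order. Any function smooth in $v$ has a Taylor expansion $W=\sum_i t^iW_i(y,\theta)+o(t^n)$ whose coefficients satisfy the parity relation $W_i(y,-\theta)=(-1)^iW_i(y,\theta)$, since $W_i$ is the restriction to $\spheres^{k-1}$ of a degree-$i$ homogeneous polynomial on the fiber.

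With this in hand I would integrate. Integrating $W$ over $\spheres^{k-1}$ annihilates every odd-order coefficient, leaving $\int_{\spheres^{k-1}}W\,d\theta=\sum_{i\ \mathrm{even}}t^i\,\overline W_i(y)+o(t^n)$ with $\overline W_i=\int_{\spheres^{k-1}}W_i\,d\theta$. Then $\vol_{g^+}(\{t>\varepsilon\})$ equals $\int_\Sigma\int_\varepsilon^\delta t^{i-n-1}\,dt$ summed against $\overline W_i$, plus an $\varepsilon$-regular contribution from $\{t>\delta\}$. The integral $\int_\varepsilon^\delta t^{i-n-1}\,dt$ produces $\tfrac1{n-i}\varepsilon^{i-n}$ for $i<n$, a term $\log(1/\varepsilon)$ for $i=n$, and a finite part for $i>n$. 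Matching with (\ref{volexpintroeq}) gives $c_j=\tfrac1{n-j}\int_\Sigma\overline W_j$, which vanishes for odd $j$ because only even $i$ survive; and the logarithmic coefficient comes only from $i=n$, so $\mathcal E_{n,k}=0$ when $n$ is odd and $\mathcal E_{n,k}=\int_\Sigma\overline W_n$ when $n$ is even. Since $W_n$ is determined by finitely many terms of the local expansions of $g$ and $u$, all of which are locally formally determined, $\overline W_n$ is a local extrinsic Riemannian invariant density, proving the even-$n$ structural claim.

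For conformal invariance I would argue variationally, using that $g^+$ itself is conformally invariant: if $\hat g=e^{2\omega}g$ then, by the conformal covariance $\hat u=e^{\omega}u$ (valid to the relevant order by the uniqueness in Theorem \ref{expandthm}), one has $\hat g^+=\hat u^{-2}\hat g=u^{-2}g=g^+$, so only the cutoff changes with the representative. I would interpolate $g_s=e^{2s\omega}g$, let $t_s$ be the $g_s$-distance to $\Sigma$, and differentiate. In fixed $g_0$-Fermi coordinates the region $\{t_s>\varepsilon\}$ is $\{t>\rho_s(y,\theta,\varepsilon)\}$, and the transport formula gives $\tfrac{d}{ds}\vol_{g^+}(\{t_s>\varepsilon\})=-\int_\Sigma\int_{\spheres^{k-1}}\rho_s^{-(n+1)}W(y,\theta,\rho_s)\,\dd_s\rho_s\,d\theta\,dy$, which after writing $\rho_s=\varepsilon R$ becomes $-\varepsilon^{-n}\sum_i\varepsilon^i\int_\Sigma\int_{\spheres^{k-1}}W_i\,G_i\,d\theta\,dy$ with $G_i=R^{i-n-1}\dd_sR$. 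Being $\varepsilon^{-n}$ times a power series in $\varepsilon$, this flux contains no $\log\varepsilon$; hence $\tfrac{d}{ds}\mathcal E_{n,k}=0$ and the energy is invariant for even $n$. For odd $n$ I must instead show the $\varepsilon^0$ coefficient vanishes. The key lemma is that $t_s/t_0$ enjoys the same graded parity as $W$: since the squared distance $d_{g_s}(\cdot,\Sigma)^2$ is smooth in a tubular neighborhood, its pullback by $E_0$ is smooth in $v_0$, so $t_s^2/t_0^2$ — and, via a positive square root, $t_s/t_0$ — lies in the ring of series whose $t_0^j$-coefficient has $\theta$-parity $(-1)^j$; inverting $t_s=\varepsilon$ keeps $R=\rho_s/\varepsilon$ in this ring, and therefore so is each $G_i$. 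Consequently the $\varepsilon^{n-i}$-coefficient $G_i^{(n-i)}$ has parity $(-1)^{n-i}$, the integrand $W_iG_i^{(n-i)}$ has parity $(-1)^n=-1$, and its $\spheres^{k-1}$-integral vanishes. This gives $\tfrac{d}{ds}V_{n,k}=0$, so $V_{n,k}$ is invariant for odd $n$.

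I expect the main obstacle to be the interface between smoothness and parity: the parity bookkeeping is legitimate only because both $u/t$ and the distance comparison $t_s/t_0$ are \emph{smooth} in the normal variable to the needed order, not merely polyhomogeneous. Establishing the former is the burden of section \ref{smoothsec} and is precisely where the hypothesis $k\notin E_n$ (and the fiber-integral normalization when $k\in O_n$) is used; establishing the latter rests on smoothness of the squared distance function and on checking that the relevant graded-parity ring is closed under the operations (products, reciprocals, square roots, implicit inversion, and $\dd_s$) used above. A final routine point is independence of the energy from the choice of formal metric: since Theorem \ref{expandthm} pins down $u$ to order $o(t^{n+1})$ under the stated normalization, and $\overline W_n$ depends on $u$ only through that order, $\mathcal E_{n,k}$ is unchanged by any admissible choice of $g^+$; this, together with the smooth $s$-dependence of all data, also makes the commutation of $\tfrac{d}{ds}$ with the $\varepsilon$-expansion harmless.
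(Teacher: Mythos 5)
Your proposal is sound, and its first half---writing $dV_{g^+}=t^{-(n+1)}W\,dt\,d\theta\,dy$, invoking the smoothness of $u/t$ and of the Jacobian/determinant factor, and killing odd-order coefficients by antipodal parity upon fiber integration---is essentially the paper's own argument, which packages the parity in the spaces $\mathcal{Y}_j(\widetilde{\Sigma})$ (only the parity, not the harmonic-degree bound, is actually used at this stage). Where you genuinely diverge is the conformal-invariance half. The paper makes a \emph{finite} comparison: by Theorem \ref{tsmooththm} (proved via an eikonal-equation iteration) it writes $t=\hat{t}\Psi$ with $\Psi$ smooth on $M$, converts the cutoff $\{\hat{t}>\varepsilon\}$ into $\{t>\hat{\varepsilon}\}$ with $\hat{\varepsilon}=\varepsilon\Psi(x,\omega,\varepsilon)$, and evaluates the shell integral $\int_{\varepsilon}^{\hat{\varepsilon}}\vartheta\,dt$; absence of a log term gives $\widehat{\mathcal{E}}_{n,k}=\mathcal{E}_{n,k}$, and the parity of $\vartheta_jF_{n-j,n-j}$ and $\vartheta_n\omega_0$ gives $\widehat{V}_{n,k}=V_{n,k}$ for odd $n$. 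You instead interpolate $g_s=e^{2s\omega}g$ and differentiate, obtaining a boundary flux; and you reach the key parity lemma for $t_s/t_0$ not through the eikonal equation but through smoothness of the \emph{squared} distance function plus closure of the parity ring (series invariant under $(t,\theta)\mapsto(-t,-\theta)$) under square roots and implicit inversion. That route is more elementary and is legitimate: it yields only the formal parity-graded expansion of $\hat{t}/t$ rather than the paper's genuine smoothness on $M$, but the volume argument uses nothing more. What the paper's finite comparison buys is that one never differentiates an asymptotic expansion in a parameter; your derivative formulation needs the commutation of $d/ds$ with the $\varepsilon$-expansion, which you flag but do not establish. The clean repair is to integrate your flux identity over $s\in[0,1]$: the flux expansion is uniform in $s$ (compactness and smooth $s$-dependence of all data), so uniqueness of asymptotic expansions gives $\mathcal{E}(1)=\mathcal{E}(0)$ and $V(1)-V(0)=\int_0^1f_V(s)\,ds$ with $f_V\equiv0$ for odd $n$ by your parity computation---which in effect reconstructs the paper's shell integral. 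Your handling of the remaining claims ($c_j=0$ for odd $j$, locality of $\mathcal{E}_{n,k}$, and independence of the choice of formal solution, since $W_n$ depends on $u$ only modulo $o(t^{n+1})$, where Theorem \ref{expandthm} gives uniqueness) coincides with the paper's.
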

We remark again that allowing irrational powers higher than one into the expansion (\ref{formaleq}) would not change any of the claims in this theorem, except that (\ref{volexpintroeq}) would contain non-integral powers
of $\varepsilon$ in addition to those present. Thus, this theory can be applied to the polyhomogeneous global solutions produced in \cite{mp96}, except for the exceptional codimensions.

Because of the lack of uniqueness of singular Yamabe solutions, the variation of renormalized volume (when it is defined) is not well-posed in general. However, because the energy is the integral of a local extrinsic quantity,
it still makes sense to discussion the derivative of energy when $\Sigma$ varies for even $n$; and in that case, (\ref{evenvar}) still holds.

A natural question -- studied recently for the Graham-Witten setting \cite{cgk23} -- is whether a $Q$-curvature and associated GJMS-type operator exist for the energies $\mathcal{E}_{n,k}$ in the case of $n$ even.
We are preparing a paper that answers this in the affirmative.

It would be interesting to understand better the conformal knot invariant $V_{1,2}$ when $M = \spheres^3$. We compute it in section \ref{calcsec} in the case of the equatorial unknot, obtaining the value $V_{1,2} = -4\pi^2$. Computing it for
other conformal embeddings of the unknot or for other knots would help shed light on its properties. It would likewise be interesting to know its relationship to other conformal energies.

The paper is organized as follows. In section \ref{setupsec}, we fix notations and coordinate systems. In section \ref{funcsec},
we introduce the core ideas that we will use to analyze smoothness. This is a matter of paying careful attention to the expansion of a function
in spherical harmonics. In section \ref{smoothsec}, we apply this theory to analyze several functions. Theorem \ref{usmooththm} is the heart of the paper. Not only does it develop the formal expansions
of $\frac{u}{t}$ and thus yield Theorem \ref{expandthm}, but more importantly, it shows that these expansions constitute functions that are smooth on $M$ and thus have the parity properties we need.
For negative curvature, the actual singular Yamabe function $u$ is therefore $t$ times a function smooth up to order $o(t^n)$.
We carry out a similar analysis for the smoothness of the ratio of the distance functions for conformally related metrics. We prove Theorem \ref{mainthm} in section \ref{volsec}, wherein the preceding sections allow us to use
much the same argument as in \cite{g99}. In section \ref{varsmoothsec}, we discuss how various quantities change under variations of $\Sigma$, in preparation for proving Theorem \ref{varthm}. We prove this theorem in section
\ref{varsec}. Finally, in section \ref{calcsec}, we compute the quantities we have defined in the case of the equatorial subspheres of any sphere and for the case $n = 2$.

\textbf{Acknowledgments.} The authors gratefully acknowledge Andrea Malchiodi for suggesting the problem to them. They are grateful as well for helpful conversations with Robin Graham, Rafe Mazzeo, Jeffrey Case,
Mieczyslaw Dabkowski, and Malcolm Hoffman. The second author was partially supported by Simons grant \#966614.

\section{Preliminaries}\label{setupsec}
Throughout, we let $M^{n + k}$ be a closed Riemannian $(n+k)$-manifold with metric $g$, and we let $\Sigma^n$ be an embedded closed submanifold of codimension $k$. 
We write $h_0 = g|_{T\Sigma}$.

We recall the existence of Fermi coordinates for $\Sigma \hookrightarrow M$; see \cite{leeIRM,grayTubes}. These are coordinates $\left\{ x^1,\dots,x^{n + k} \right\}$ in a neighborhood $\mathcal{U}$ in $M$
of any point $p \in \Sigma$
such that the following properties hold. 
\begin{enumerate}[(i)]
	\item $\Sigma \cap \mathcal{U} = \left\{ q: x^{n + 1} = \dots = x^{n + k} = 0 \right\}$; 
	\item for any $(x^1,\cdots,x^n,x^{n + 1},\cdots,x^{n + k})$ in the range of the coordinates, the curve $s \mapsto (x^1,\cdots,x^n,sx^{n + 1},\cdots,sx^{n + k})$ describes a geodesic;
	\item the distance $t$ to $\Sigma$ may be written in these coordinates as 
		\begin{equation}\label{distform}
			t = \sqrt{(x^{n + 1})^2 + \dots + (x^{n + k})^2};
		\end{equation}
	\item along $\Sigma$, $\partial_{i} \perp \partial_a$ for $1 \leq i \leq n$ and $n + 1 \leq a \leq n + k$;
	\item\label{fermfour} along $\Sigma$, $\partial_a \perp \partial_b$ for $n + 1 \leq a, b \leq n + k$ and $a \neq b$;
	\item along $\Sigma$, $\partial_ag_{bc} = 0$ for $n + 1 \leq a,b,c \leq n + k$.
\end{enumerate}
These coordinates induce an identification, for some $V^k \subseteq \mathbb{R}^k$ containing the origin,
\begin{equation}
	\mathcal{U} \approx (\Sigma^n \cap \mathcal{U}) \times V^k \subseteq (\Sigma^n \cap \mathcal{U}) \times \mathbb{R}^{k}. \label{fermident}
\end{equation}
We use these coordinates throughout the paper, and we introduce the following conventions. We use the Roman indices $1 \leq i,j,k \leq n$ for the coordinates tangent to the $\Sigma$ factor in the just-mentioned
identification, while we use the Roman indices $n + 1 \leq a,b,c \leq n + k$ for the coordinates in the normal directions. 
We will also name the latter coordinates $y^a = x^a$ ($n + 1 \leq a \leq n + k$) when dealing especially with them,
to emphasize that they are normal coordinates. Thus, we might write a point as $(x^i,y^a)$. We will use the capital Roman indicies $1 \leq A,B,C \leq n + k$ to run over all the coordinates. 

We next introduce cylindrical, or polar Fermi, coordinates, which are defined in terms of the Fermi coordinates and will be equally useful to us. Any point $(y^{n + 1},\dots,y^{n + k})$ in the second factor of
(\ref{fermident}) may be written $t\omega$, where $t \in [0,\infty)$ and $\omega \in \spheres^{k - 1}$. We may thus write the point as $(\omega,t)$, and this induces an identification 
\begin{equation}\label{fermpolident}
	\mathcal{U} \setminus \Sigma \approx (\Sigma \cap \mathcal{U}) \times \spheres^{k - 1} \times (0,\delta)
\end{equation}
(upon restricting $\mathcal{U}$ if necessary). Since $\Sigma$ is compact, we may cover it by finitely many such charts,
and by restricting if necessary, may assume that all of them lie in (and cover) a single tubular neighborhood $\mathcal{T}$ of $\Sigma$, say of radius $\delta$.
In these coordinates, we will identify a point by the triple $(p,\omega,t)$, with $t$ the distance to $\Sigma$. We may choose coordinates $\omega^{\mu}$ locally on $\spheres^{k - 1}$, $n + 1 \leq \mu \leq n + k - 1$, so that
$(x^i,\omega^{\mu},t)$ are coordinates on $\mathcal{U} \setminus \Sigma$. When using these coordinates, we use the convention $1 \leq I,J,K \leq n + k$ (rather than $A,B,C$) to make it clear we are using cylindrical
coordinates; we also use $z^I$ to run over all the coordinates, with $z^i := x^i$, $z^{\mu} := \omega^{\mu}$, and
$z^{n + k} := t$.

It is inconvenient that $t$ is not smooth at $\Sigma$, and that these coordinates do not extend to all of $\mathcal{U}$. This, of course, is just the same problem presented by polar coordinates at the origin; and the solution,
as usual, is to pass to the \emph{blow-up space} $\widetilde{M}$, as follows.
Let $N\Sigma=(T\Sigma)^\perp \subseteq TM$ be the normal bundle of $\Sigma$ in $(M,g)$. Write $\St =SN\Sigma$, where $SN\Sigma \subset N\Sigma$ is the unit 
normal sphere bundle of $\Sigma$. 
Thus, $\St$ is the total space of a fibration over $\Sigma$ with fiber $\spheres^{k-1}$. Define the blow-up of $M$ along $\Sigma$ as $\Mt = (M\backslash \Sigma) \sqcup \St$ and the blow-down map $\beta:\Mt\rightarrow M$ by
$\beta(p)=p$ for $p\in M\backslash \Sigma$ and $\beta(\widetilde{p}) = \pi(\widetilde{p})$ for $\widetilde{p}\in\St$, where $\pi$ is the projection map. 
Then $\Mt$ has a unique smooth structure such that the pushforward $d\beta_{\widetilde{p}}$ is a local diffeomorphism away from $\St$ and has rank $n+1$ for each $\widetilde{p}\in\St$, and such that
the restriction of each chart to the fibers is compatible with the smooth structure there; see
\cite{mel08}. Note that the blowup $\widetilde{M}$ is a compact manifold \emph{with boundary} $\partial\widetilde{M} = \widetilde{\Sigma}$.

Suppose $(\mathcal{U},\varphi)$ is a Fermi chart about some $q\in \Sigma$. 
%then $\mathcal{U} \bs \Sigma \cong \phi(\mathcal{U}\cap\Sigma)\times \mathbb{S}^{n-k-1}\times(0,\delta)$.{\tc{what about $t=0$?}}
Under $\beta $ the functions $x^i$, $\omega^\mu$ and $t$, defined as cylindrical coordinates above,
lift to smooth coordinates on a neighbourhood $\widetilde{\mathcal{U}} = \beta^*\mathcal{U}$ of $\St$ in $\Mt$:
\begin{equation*}
	\widetilde{\mathcal{U}} = \beta^{-1}(\mathcal{U}) \approx (\Sigma \cap \mathcal{U}) \times \spheres^{k - 1} \times [0,\delta).
\end{equation*}
In these coordinates, the metric $\beta^*g$ on $\Mt$ in a neighborhood of $\St$ can be written as 
\begin{equation}\label{gpolar}
	\beta^*g =\begin{pmatrix}
    h&t^2a&0\\t^2a^T&t^2b&0\\0&0&1
\end{pmatrix},
\end{equation}
where $h\in \Gamma\big(\widetilde{\mathcal{U}}, \sym^2(T^*\Sigma)\big)$ and $b\in \Gamma \big(\widetilde{\mathcal{U}},\sym^2(T^*(\spheres^{k-1}))\big)$ are symmetric two-tensors and $a\in \Gamma\big(\widetilde{\mathcal{U}},
T^*\Sigma \otimes T^*(\spheres^{k-1})\big)$. 
Using Schur complements, we see that its inverse is
\[\beta^* g^{-1} =\begin{pmatrix}
    h^{-1}& \mf{a}  &0\\
    \mf{a}^T&\frac{1}{t^2}b^{-1} + \mf{b} &0\\
    0&0&1
\end{pmatrix} + C,\]
where $\mf{a}=-h^{-1}ab^{-1}$, $\mf{b} = b^{-1}a^T\mf{a}$, and $C\in\Gamma(\widetilde{\mathcal{U}},\sym^2(T^*M))$ is such that $C^{AB} = O(t^2)$ for all $A,B$ and $C^{tJ}=0$ for all $J$.
Writing $\alpha = b-t^2a^Th^{-1}a$, we have $\det \beta^*g = t^{2(k-1)}\det h \det \alpha$.

We note the following useful fact, which follows easily from the properties of Fermi coordinates and the change-of-variable formula for metrics.
\begin{lemma}
	At $t = 0$, the metric $b$ restricted to each fiber is isometric to the round metric $\bo$ on the sphere $\spheres^{k - 1}$. That is, $b|_{t = 0} = \bo$.
\end{lemma}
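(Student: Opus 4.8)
The plan is to read off the $(\mu\nu)$-block of $\beta^*g$ by changing variables from the normal Cartesian coordinates $y^a$ to the polar coordinates $(\omega^\mu,t)$, and then to evaluate its leading coefficient at $t=0$ using the orthonormality of the normal frame along $\Sigma$. Writing the standard embedding $\spheres^{k-1}\hookrightarrow\mathbb{R}^k$ in components as $\omega^a = \omega^a(\omega^\mu)$, the polar substitution is $y^a = t\,\omega^a$, so I would first compute
\[
	dy^a = \omega^a\,dt + t\,\partial_\mu\omega^a\,d\omega^\mu.
\]
Everything then reduces to tracking which pieces of $g$ contribute a $d\omega^\mu d\omega^\nu$ term.

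Substituting into $g = g_{ij}\,dx^idx^j + 2g_{ia}\,dx^idy^a + g_{ab}\,dy^ady^b$ and collecting the coefficient of $d\omega^\mu d\omega^\nu$, I observe that the $g_{ij}$ term carries no $d\omega$, and the mixed $g_{ia}$ term contributes only to the $dx^i\,dt$ and $dx^i\,d\omega^\mu$ blocks; only $g_{ab}\,dy^ady^b$ produces a $d\omega^\mu d\omega^\nu$ part. Comparing with the stated block form (\ref{gpolar}), this identifies
\[
	b_{\mu\nu} = g_{ab}\,\partial_\mu\omega^a\,\partial_\nu\omega^b
\]
as a smooth function of $(x^i,\omega^\mu,t)$ on $\widetilde{\mathcal{U}}$ — that is, $b$ is the first fundamental form of the standard embedding measured in the fiber metric $g_{ab}(x,t\omega)$. (The vanishing of the $(t,\mu)$ entries and the normalization of the $(t,t)$ entry in (\ref{gpolar}) is the Gauss-lemma content of Fermi properties (ii)--(iii), but only the $(\mu\nu)$-block is needed here.)

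It then remains to evaluate at $t=0$, and the single substantive input is that the normal coordinate vector fields are orthonormal along $\Sigma$, i.e. $g_{ab}|_{t=0}=\delta_{ab}$. I expect this to be the only point requiring genuine care. The off-diagonal vanishing is Fermi property (v); for the diagonal normalization I would argue that $\partial_t = \omega^a\partial_a$ is the unit-speed radial field, either directly from the $(t,t)$-entry of (\ref{gpolar}) being $1$ for every $\omega$ (whence $g_{ab}|_\Sigma\,\omega^a\omega^b \equiv 1$ on unit vectors forces $g_{ab}|_\Sigma=\delta_{ab}$), or from properties (ii)--(iii), since each ray $s\mapsto(x,s\omega)$ is a geodesic whose parameter equals the distance $t$, hence is unit-speed. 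Feeding $g_{ab}|_{t=0}=\delta_{ab}$ into the formula above gives
\[
	b_{\mu\nu}|_{t=0} = \sum_a \partial_\mu\omega^a\,\partial_\nu\omega^a,
\]
the pullback of the Euclidean metric on $\mathbb{R}^k$ under $\spheres^{k-1}\hookrightarrow\mathbb{R}^k$, i.e. the round metric $\bo$. Thus $b|_{t=0}=\bo$, and the computation is otherwise entirely routine once the orthonormality is justified, since that is exactly what converts the abstract $(\mu\nu)$-block into the concrete round metric.
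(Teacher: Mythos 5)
Your proof is correct and follows exactly the route the paper indicates (the paper offers no written proof, only the remark that the lemma ``follows easily from the properties of Fermi coordinates and the change-of-variable formula for metrics''): you perform the polar change of variables $y^a = t\,\omega^a$ to identify $b_{\mu\nu} = g_{ab}\,\partial_\mu\omega^a\,\partial_\nu\omega^b$, and then use the Fermi properties to get $g_{ab}|_{t=0} = \delta_{ab}$, whence $b|_{t=0}$ is the pullback of the Euclidean metric under $\spheres^{k-1}\hookrightarrow\mathbb{R}^k$, i.e.\ $\bo$. Your identification of the orthonormality of $\partial_a$ along $\Sigma$ as the one substantive input, and your polarization argument deducing it from unit speed of the radial geodesics, is exactly the right justification.
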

Note that different choices of Fermi coordinate systems are related to each other -- at least in the $y^a$ -- by elements of the orthogonal group $O(k)$. Since the sphere metric is invariant with respect to the orthogonal group,
it follows that the round metric $\bo$ on each fiber $SN\Sigma$ is induced in a well-defined way, independent of any choice of coordinates.

\noindent
\textbf{Conventions.} The Riemann curvature tensor is defined so that in local coordinates, the Ricci tensor is given by $R_{AB} = g^{CD}R_{ACDB}$. By smooth, we mean $C^{\infty}$. The Laplacian is given by
$\Delta_g = \diver_g\grad_g$. The second fundamental form is the normal part of the ambient connection restricted to vectors tangent to $\Sigma$. The mean curvature is the trace of the second fundamental form.

\section{Functions of Polynomial Type}\label{funcsec}
In this section, we develop the tools we will use to analyze the smoothness on $M$ of functions smooth on $\widetilde{M}$. If a function $f: \mathbb{R}^n \to \mathbb{R}$ is smooth on
$\mathbb{R}^n \setminus \mathbb{R}^k$ and is homogeneous of degree $j \geq 0$ in distance to $\mathbb{R}^k$, it is smooth on $\mathbb{R}^n$ if and only if it is a polynomial. We wish to formalize
this fact in general, in terms of polar coordinates and using spherical harmonics.

As an example of the ideas, we consider some functions written in polar coordinates on
$\mathbb{R}^2$. The function $t\cos(\theta)$ is obviously smooth, since it is just $x$; and similarly $x^2y = t^3\cos^2(\theta)\sin(\theta) = \frac{1}{4}t^3(\sin(3\theta) + \sin(\theta))$. On the other hand,
$t^2\cos^4(\theta) = \frac{1}{8}t^2(3 + 4\cos(2\theta) + \cos(4\theta))$ is not smooth, since it is just $x^2\cos^2(\theta)$. The source of the problem is that it has higher powers -- or, equivalently, higher frequencies -- 
of sines and cosines than the power of $t$ (which is to say, it is not a polynomial). Another function that fails to be smooth is $t^2\sin(\theta) = ty$. Here, the problem is a parity 
mismatch between the power of $t$ and the trigonometric power, so a (non-smooth) factor of $t$ is left over.

We begin by recalling some facts about spherical harmonics. Unless otherwise stated, throughout this section assume $k > 1$. 
A function $Y \in C^{\infty}(\spheres^{k - 1})$ is a \emph{spherical harmonic} if it is an eigenfunction of the spherical Laplacian $\Delta_{\bo}$. Since it is known
that the eigenvalues of $\Delta_{\bo}$ are $-j(j + k - 2)$, where $j \in \mathbb{N} \cup \left\{ 0 \right\} =: \mathbb{N}_0$, we can classify the spherical harmonics by $j$; thus, we set
\begin{equation*}
	\mathcal{H}_j(\spheres^{k - 1}) = \left\{ u \in C^{\infty}(\spheres^{k - 1}): \Delta_{\bo}u + j(j + k - 2)u = 0 \right\}.
\end{equation*}
The elements of $\mathcal{H}_j$ are called the spherical harmonics of degree $j$.
We next prove the following; it is standard in three dimensions, but we did not immediately find a statement in general dimension.
\begin{proposition}\label{contractprop}
	Suppose that $Y_p \in \mathcal{H}_{p}(\spheres^{k - 1}), Y_q \in \mathcal{H}_q(\spheres^{k - 1})$. Then we may write $Y_pY_q = \sum_{l = 0}^{p + q}Z_l$, with $Z_l \in
	\mathcal{H}_l(\spheres^{k - 1})$. Moreover, $Z_l = 0$ if $p + q - l \equiv 1$ (mod $2$).
\end{proposition}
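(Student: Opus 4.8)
The plan is to reduce everything to the classical correspondence between spherical harmonics on $\spheres^{k-1}$ and harmonic homogeneous polynomials on $\mathbb{R}^k$. First I would recall (or cite) the standard fact that $\mathcal{H}_j(\spheres^{k-1})$ is exactly the space of restrictions to the unit sphere of degree-$j$ harmonic homogeneous polynomials on $\mathbb{R}^k$. This follows by writing the Euclidean Laplacian in polar coordinates $x = t\omega$ as $\Delta = \partial_t^2 + \frac{k-1}{t}\partial_t + \frac{1}{t^2}\Delta_{\bo}$ and applying it to $P = t^j\big(P|_{\spheres^{k-1}}\big)$: a short computation shows $P$ is harmonic if and only if $\Delta_{\bo}\big(P|_{\spheres^{k-1}}\big) = -j(j+k-2)P|_{\spheres^{k-1}}$, i.e. $P|_{\spheres^{k-1}}\in\mathcal{H}_j$, and conversely every such eigenfunction extends homogeneously to a harmonic polynomial. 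Thus I may choose harmonic homogeneous polynomials $H_p, H_q$ of degrees $p,q$ restricting to $Y_p, Y_q$.

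Next I would invoke the classical decomposition of homogeneous polynomials: every homogeneous polynomial of degree $m$ on $\mathbb{R}^k$ can be written as $\sum_{i\ge 0}|x|^{2i}G_{m-2i}$ with each $G_{m-2i}$ harmonic homogeneous of degree $m-2i$. Applying this to the product $H_pH_q$, which is homogeneous of degree $p+q$, gives $H_pH_q = \sum_{i=0}^{\lfloor (p+q)/2\rfloor}|x|^{2i}G_{p+q-2i}$. Restricting to $\spheres^{k-1}$, where $|x|^2\equiv 1$, yields $Y_pY_q = \sum_i Z_{p+q-2i}$ with $Z_{p+q-2i} := G_{p+q-2i}|_{\spheres^{k-1}}\in\mathcal{H}_{p+q-2i}$. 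This is precisely the claimed decomposition $Y_pY_q = \sum_{l=0}^{p+q}Z_l$, since the indices $p+q-2i$ range over $\{p+q,\,p+q-2,\,\dots\}\subseteq\{0,\dots,p+q\}$.

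The parity statement then falls out immediately: the only degrees $l$ occurring are $l = p+q-2i$, all satisfying $l\equiv p+q\pmod 2$, so $Z_l = 0$ whenever $p+q-l$ is odd. Alternatively, and in the spirit of the paper's later use of this result, I could isolate the parity via the antipodal map $T:\spheres^{k-1}\to\spheres^{k-1}$. Being an isometry, $T$ commutes with $\Delta_{\bo}$ and hence preserves each eigenspace $\mathcal{H}_l$; and since an element of $\mathcal{H}_l$ restricts a degree-$l$ homogeneous polynomial, $T^*$ acts on $\mathcal{H}_l$ as multiplication by $(-1)^l$. Applying $T^*$ to $Y_pY_q = \sum_l Z_l$ gives $(-1)^{p+q}\sum_l Z_l = \sum_l (-1)^l Z_l$, and uniqueness of the spherical-harmonic decomposition (orthogonality of distinct eigenspaces) forces $(-1)^{p+q}Z_l = (-1)^l Z_l$ for each $l$, whence $Z_l = 0$ unless $l\equiv p+q\pmod 2$.

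I do not anticipate a genuine obstacle: the whole argument rests on the polynomial model of spherical harmonics and the harmonic decomposition of homogeneous polynomials, both standard. The only point needing mild care is the exact range of degrees — that none exceeds $p+q$ and none is negative — but this is transparent once $Y_pY_q$ is presented as the restriction of a degree-$(p+q)$ polynomial. Should one wish to avoid quoting the polynomial decomposition, the finiteness could be obtained instead from $L^2(\spheres^{k-1})$-projection of the smooth function $Y_pY_q$ onto the eigenspaces $\mathcal{H}_l$ together with the degree bound; but the polynomial route is cleaner and delivers both the decomposition and the parity simultaneously.
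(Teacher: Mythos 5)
Your proposal is correct and follows essentially the same route as the paper: both identify $Y_p, Y_q$ with restrictions of harmonic homogeneous polynomials, decompose the product $H_pH_q$ via the classical expansion $\sum_i |x|^{2i}G_{p+q-2i}$ into harmonic homogeneous pieces, and restrict to $\spheres^{k-1}$ to obtain both the degree bound and the parity claim at once. The antipodal-map argument you sketch as an alternative for the parity is a nice supplementary observation but is not needed, since the polynomial decomposition already delivers it.
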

\begin{proof}
	It is a standard fact that if $Y \in \mathcal{H}_{p}$, then it is the restriction to $\spheres^{k - 1}$ of a homogeneous harmonic polynomial of degree $p$ on $\mathbb{R}^k$; for this and other standard properties, see e.g.
	\cite{sw71}. Suppose, then, that $Y_p$ is the restriction of $P_p$ and $Y_q$ is the restriction of $P_q$, these each being homogeneous harmonic polynomials on $\mathbb{R}^k$. Clearly enough, then,
	$Y_pY_q$ is the restriction of $P_pP_q$, which is a polynomial of degree $p + q$ on $\mathbb{R}^k$, homogeneous of degree $p + q$ (but generally not harmonic).

	Now, by Theorem 2.1 of \cite{sw71}, any homogeneous polynomial on $\mathbb{R}^k$, such as $P_pP_q$, may be written in the form
	\begin{equation*}
		P_p(x)P_q(x) = Q_0(x) + |x|^2Q_1(x) + \cdots + |x|^{2l}Q_l(x),
	\end{equation*}
	where each $Q_j(x)$ is a homogeneous harmonic polynomial of degree $p + q - 2j$, for $0 \leq j \leq l \leq \frac{p + q}{2}$.
	Restricting this to $\spheres^{k - 1}$ exhibits $Y_pY_q$ as a sum of spherical harmonics of constant parity and of degree bounded by $p + q$, as claimed.
\end{proof}

We wish to use these facts to analyze functions on $\widetilde{M}$. We define a function space on $\widetilde{\Sigma}$ by
\begin{equation*}
	\mathcal{H}_j(\widetilde{\Sigma}) := \left\{ u \in C^{\infty}(\widetilde{\Sigma}): \Delta_{\bo}u + j(j + k - 2)u = 0 \right\}.
\end{equation*}
Here, once again, $\bo$ is the round metric induced on each fiber by $g$. We recall that the eigenvalues of the round metric are precisely
$\left\{ -j(j + k - 2): j \in \mathbb{N} \cup \left\{ 0 \right\} \right\}$. We similarly define $\mathcal{H}_j(\widetilde{U})$ for any open subset of $\widetilde{\Sigma}$.

We define $\Xi_j$ to be the set $\left\{ (p,u): p \in \Sigma \text{ and } u \in \mathcal{H}_j(SN_p\Sigma) \right\}$, where $\mathcal{H}_j(SN_p\Sigma)$ is the isomorphic copy of $H_j(\spheres^{k - 1})$ for which the domain
of the functions is $SN_p\Sigma$. Then $\Xi_j$ is a finite-rank vector vector bundle over $\Sigma$ for each $j$. Clearly $\mathcal{H}_j(\widetilde{\Sigma})$ is nothing but the set of its sections:
$\mathcal{H}_j(\widetilde{\Sigma}) = \Gamma(\Xi_j)$.

The following will be useful facts about these spaces.
\begin{proposition}\label{hsigmaprop}
	\begin{enumerate}[(a)]
		\item\label{hsigmapropparttens} Let $p \in \Sigma$. There is a neighborhood $U$of $p$ in $\Sigma$ such that
			$\mathcal{H}_j(\beta^{-1}(U))$ is isomorphic to $C^{\infty}(U) \otimes \mathcal{H}_j(\spheres^{k - 1})$.
		\item\label{hsigmaproppartoplus} The algebraic direct sum $\oplus_{j = 0}^{\infty}\mathcal{H}_j(\widetilde{\Sigma})$ is an associative algebra.
	\end{enumerate}
\end{proposition}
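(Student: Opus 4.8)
The plan is to reduce both statements to the classical spherical-harmonic theory on a single round sphere $\spheres^{k-1}$, exploiting the fact that the fibration $\widetilde{\Sigma} = SN\Sigma \to \Sigma$ is locally isometrically trivial. For part \ref{hsigmapropparttens}, I would first shrink $U$ so that the normal bundle $N\Sigma$ carries a smooth orthonormal frame $E_{n+1},\dots,E_{n+k}$ over $U$, which is possible whenever $U$ is, say, contractible. Such a frame identifies $N\Sigma|_U$ isometrically with $U \times \mathbb{R}^k$, and hence $\beta^{-1}(U)$ with $U \times \spheres^{k-1}$, in such a way that each fiber carries precisely its round metric $\bo$ (this is the content of the Lemma above). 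Under this trivialization the fiberwise operator $\Delta_{\bo}$ becomes the ordinary spherical Laplacian acting only in the $\spheres^{k-1}$ factor, so a smooth function $f$ on $U \times \spheres^{k-1}$ lies in $\mathcal{H}_j(\beta^{-1}(U))$ exactly when $f(x,\cdot) \in \mathcal{H}_j(\spheres^{k-1})$ for each fixed $x \in U$. Fixing an $L^2$-orthonormal basis $Y_1,\dots,Y_N$ of the finite-dimensional space $\mathcal{H}_j(\spheres^{k-1})$, I would set $c_a(x) = \int_{\spheres^{k-1}} f(x,\omega) Y_a(\omega)\, d\omega$ and recover $f = \sum_{a=1}^N c_a \otimes Y_a$; the assignment $f \mapsto (c_1,\dots,c_N)$ is then the claimed isomorphism onto $C^{\infty}(U) \otimes \mathcal{H}_j(\spheres^{k-1})$. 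The only point requiring care is that each $c_a$ is smooth on $U$, but since the trivialization has made the sphere factor (and hence each $Y_a$) independent of $x$, this is a routine application of differentiation under the integral sign.

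For part \ref{hsigmaproppartoplus}, the vector-space structure, associativity, and distributivity are all inherited from $C^{\infty}(\widetilde{\Sigma})$, so the whole content is closure under multiplication. Given $f \in \mathcal{H}_p(\widetilde{\Sigma})$ and $f' \in \mathcal{H}_q(\widetilde{\Sigma})$, Proposition \ref{contractprop} applied fiber-by-fiber shows that $(ff')(x,\cdot) = \sum_{l=0}^{p+q} Z_l^{(x)}$ with $Z_l^{(x)} \in \mathcal{H}_l(\spheres^{k-1})$; thus $ff'$ decomposes fiberwise into spherical harmonics of degree at most $p+q$. What remains is to show that the components $Z_l$, viewed as functions on $\widetilde{\Sigma}$, are smooth, i.e. that $Z_l \in \mathcal{H}_l(\widetilde{\Sigma})$. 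Here I would avoid patching together local projection formulas and instead extract $Z_l$ globally by applying to the smooth function $ff'$ the operator $\prod_{0 \le m \le p+q,\ m \ne l} \frac{\Delta_{\bo} + m(m+k-2)}{m(m+k-2) - l(l+k-2)}$. Since the eigenvalues $m(m+k-2)$ are distinct for distinct $m \in \mathbb{N}_0$ (as $k \ge 2$), this polynomial in $\Delta_{\bo}$ acts as the identity on the degree-$l$ summand and annihilates every other summand, so it reproduces $Z_l$ exactly. As $\Delta_{\bo}$ is a fiberwise differential operator with smooth coefficients on $\widetilde{\Sigma}$, any polynomial in it preserves $C^{\infty}(\widetilde{\Sigma})$; hence each $Z_l$ is smooth, and $ff' = \sum_{l=0}^{p+q} Z_l$ lies in $\oplus_{l = 0}^{\infty} \mathcal{H}_l(\widetilde{\Sigma})$, as needed.

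The genuinely structural inputs — the local isometric trivialization in \ref{hsigmapropparttens} and Proposition \ref{contractprop} in \ref{hsigmaproppartoplus} — are straightforward. I expect the one place demanding attention to be the smoothness of the spherical-harmonic components in the base direction, which is the real substance of both parts. The cleanest resolution seems to be the observation that the fiberwise projection onto a fixed eigenspace is realized by a globally defined fiberwise differential operator (a polynomial in $\Delta_{\bo}$), which manifestly preserves smoothness on $\widetilde{\Sigma}$; in part \ref{hsigmapropparttens} the same conclusion is reached even more directly by differentiating under the integral in the trivialized chart.
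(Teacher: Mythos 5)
Your proof is correct, and both parts rest on the same key input as the paper (Proposition \ref{contractprop}), but your part (\ref{hsigmaproppartoplus}) takes a genuinely different route. For part (\ref{hsigmapropparttens}) you and the paper argue the same way: trivialize $SN\Sigma$ over a small $U$, use finite-dimensionality of $\mathcal{H}_j(\spheres^{k-1})$, expand in a fixed basis, and check the coefficients are smooth. The paper gets coefficient smoothness by evaluating at points of the fiber (which tacitly requires choosing enough evaluation points to invert the resulting linear system), while you integrate against an $L^2$-orthonormal basis and differentiate under the integral sign, which is tidier but the same in substance. For part (\ref{hsigmaproppartoplus}), the paper reduces to the local picture with a partition of unity on $\Sigma$, lifted to $\widetilde{\Sigma}$ so that the cutoffs are fiber-constant, and then applies part (\ref{hsigmapropparttens}) together with Proposition \ref{contractprop} chart by chart; you instead stay global: you apply Proposition \ref{contractprop} fiberwise and then recover each component $Z_l$ by hitting $ff'$ with the spectral projector $\prod_{m\neq l,\,0\leq m\leq p+q}\bigl(\Delta_{\bo}+m(m+k-2)\bigr)/\bigl(m(m+k-2)-l(l+k-2)\bigr)$, a polynomial in the fiberwise Laplacian, which preserves $C^{\infty}(\widetilde{\Sigma})$ because $\Delta_{\bo}$ is a vertical operator with smooth coefficients. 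Your version buys independence from any choice of trivialization or partition of unity (indeed your proof of (\ref{hsigmaproppartoplus}) does not use (\ref{hsigmapropparttens}) at all), and it makes explicit the one point the paper's one-line deduction leaves implicit, namely that the fiberwise harmonic components of a globally smooth function are themselves smooth in the base directions. The paper's version is more elementary, using nothing beyond the local tensor-product description. Your distinctness-of-eigenvalues requirement ($k\geq 2$) matches the paper's standing hypothesis $k>1$, so nothing is lost there.
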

\begin{proof}
	\ref{hsigmapropparttens} The bundle $\Xi_j$ is locally trivializable, so this follows from the identity $\mathcal{H}_j(\widetilde{\Sigma}) = \Gamma(\Xi_j)$.

	\ref{hsigmaproppartoplus} We may cover $\Sigma$ by neighborhoods $U_1,\cdots,U_N$ as given in \ref{hsigmapropparttens}, and choose a subordinate partition of unity
	$\psi_1,\cdots,\psi_N$. These induce a partition of unity on $\widetilde{\Sigma}$, which we refer to by $\tilde{\psi}_1,\cdots,\tilde{\psi}_N$. Suppose $u \in \mathcal{H}_j(\widetilde{\Sigma})$. We may write
	$u = \sum_{i = 1}^N\tilde{\psi}_iu$, and because each $\psi_i$ is constant in the fiber coordinates, $\psi_iu \in \mathcal{H}_j(\beta^{-1}(U_i))$. The result now follows from \ref{hsigmapropparttens} and
	Proposition \ref{contractprop}.
\end{proof}
Notice that there is a canonical identification $\mathcal{H}_0(\widetilde{\Sigma}) \approx C^{\infty}(\Sigma)$.

We next set
\begin{equation*}
	\mathcal{Y}_j(\widetilde{\Sigma}) = \begin{cases}
		\mathcal{H}_0(\widetilde{\Sigma}) \oplus \mathcal{H}_2(\widetilde{\Sigma})
		\oplus \cdots \oplus \mathcal{H}_j(\widetilde{\Sigma}) & j \text { even}\\
		\mathcal{H}_1(\widetilde{\Sigma}) \oplus \mathcal{H}_3(\widetilde{\Sigma}) \oplus
		\cdots \oplus \mathcal{H}_{j}(\widetilde{\Sigma}) & j \text{ odd}.\end{cases}
\end{equation*}
For $0 \leq m \leq j$, we let $\pi_m:\mathcal{Y}_j(\widetilde{\Sigma}) \to \mathcal{H}_m(\widetilde{\Sigma})$
be the natural projection if $j \equiv m $ (mod $2$), and $0$ otherwise. We will use the following properties of these spaces.

\begin{proposition}\label{hyprop}
	\begin{enumerate}[(a)]
		\item\label{hypropprod} For $Y_i \in \mathcal{Y}_i(\widetilde{\Sigma})$ and $Y_j \in \mathcal{Y}_j(\widetilde{\Sigma})$, we have $Y_iY_j \in \mathcal{Y}_{i + j}(\widetilde{\Sigma})$. The analogue holds
			for any finite product.
		\item\label{hypropsmooth} If $Y \in \mathcal{Y}_i(\widetilde{\Sigma})$, then $t^iY \in \beta^*C^{\infty}(\mathcal{\mathcal{T}})$.
		\item\label{hypropdiff} For $f \in C^{\infty}(M)$, $\frac{\partial^j}{\partial t^j}(f\circ\beta)|_{t = 0} \in \mathcal{Y}_j(\widetilde{\Sigma})$.
		\item\label{hypropsolve} If $F \in \mathcal{Y}_i(\widetilde{\Sigma})$ and $-\mu$ is not an eigenvalue of $\Delta_{\mathring{b}}$, then the unique smooth solution $w$ to
			$\Delta_{\mathring{b}}w + \mu w = F$ is also in $\mathcal{Y}_i(\widetilde{\Sigma})$. In particular, this holds if $\mu < 0$.

			If $-\mu = \lambda_p := -p(p + k - 2)$ is an eigenvalue, but $p$ is the opposite parity from $i$, then there still exists a unique solution $w$ satisfying $w \in \mathcal{Y}_i(\widetilde{\Sigma})$.
	\end{enumerate}
\end{proposition}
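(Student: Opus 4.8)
The plan is to prove each of the four parts of Proposition \ref{hyprop} by reducing to the corresponding statement about spherical harmonics on a single fiber $\mathbb{S}^{k-1}$, using the local tensor-product description from Proposition \ref{hsigmaprop}\ref{hsigmapropparttens} and the partition-of-unity argument of part \ref{hsigmaproppartoplus} to globalize. The key observation throughout is that $\mathcal{Y}_i$ consists of functions whose fiber spherical-harmonic content has degree $\leq i$ and constant parity $\equiv i \pmod 2$, exactly matching the parity of the power $t^i$.

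For part \ref{hypropprod}, I would argue that $\mathcal{Y}_i(\widetilde{\Sigma})$ is spanned (after a partition of unity) by elements of the form $f\otimes Y$ with $f\in C^\infty(U)$ and $Y\in\mathcal{H}_m(\mathbb{S}^{k-1})$, $m\le i$, $m\equiv i\pmod 2$. The product of two such elements is $fg\otimes Y_mY_{m'}$; by Proposition \ref{contractprop}, $Y_mY_{m'}$ decomposes into spherical harmonics of degree $\le m+m'\le i+j$ and all of parity $\equiv m+m'\equiv i+j\pmod 2$. Hence the product lies in $\mathcal{Y}_{i+j}$, and the finite-product case follows by induction. For part \ref{hypropsmooth}, I would recall the elementary fact (the motivating $\mathbb{R}^2$ examples in the text) that if $Y\in\mathcal{H}_m(\mathbb{S}^{k-1})$ is the restriction of a homogeneous harmonic polynomial $P$ of degree $m$, then $t^mY=P$ is a smooth (polynomial) function on $\mathbb{R}^k$; pulling back by $\beta$ and using $t^i Y=t^{i-m}(t^mY)$ with $i-m$ even and nonnegative, so that $t^{i-m}=(t^2)^{(i-m)/2}$ is smooth, gives $t^iY\in\beta^*C^\infty(\mathcal{U})$. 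The partition of unity then handles the global statement on $\widetilde{\Sigma}$.

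Part \ref{hypropdiff} is essentially the converse direction and the conceptual heart of the whole section: I would expand $f\circ\beta$ in Fermi/polar coordinates as a Taylor series in $t$, $f\circ\beta=\sum_j t^j f_j$ with $f_j$ a function on $\widetilde{\Sigma}$, and show $f_j=\frac{1}{j!}\partial_t^j(f\circ\beta)|_{t=0}\in\mathcal{Y}_j$. The cleanest route is to work in the smooth Fermi coordinates $(x^i,y^a)$, where $f$ has an honest Taylor expansion in the $y^a$; the degree-$j$ homogeneous part is a homogeneous polynomial of degree $j$ in $(y^{n+1},\dots,y^{n+k})=t\omega$, hence equals $t^j$ times a degree-$j$ homogeneous polynomial in $\omega$ restricted to the sphere. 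By Proposition \ref{contractprop} (or directly by the Theorem 2.1 of \cite{sw71} decomposition $P=\sum|x|^{2l}Q_l$ used there), such a restriction lies in $\mathcal{Y}_j(\mathbb{S}^{k-1})$, with the crucial parity matching $t^{2l}\mapsto t^{2l}$ on the sphere. This is the step I expect to require the most care, since it is where the smooth structure on $M$ (as opposed to merely on $\widetilde{M}$) enters and where the parity constraint $m\equiv j\pmod 2$ must be verified rather than merely the degree bound.

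For part \ref{hypropsolve}, I would solve the equation $\Delta_{\mathring b}w+\mu w=F$ fiberwise by expanding $F\in\mathcal{Y}_i$ into its spherical-harmonic components $F=\sum_{m\le i,\,m\equiv i}F_m$ with $F_m\in\mathcal{H}_m(\widetilde\Sigma)$; on each component the equation becomes $(\mu-\lambda_m)w_m=F_m$ where $\lambda_m=-m(m+k-2)$, so $w_m=(\mu-\lambda_m)^{-1}F_m$ is defined and lies in $\mathcal{H}_m$ provided $\mu\ne\lambda_m$. Summing gives $w\in\mathcal{Y}_i$, and smoothness is inherited from the fiberwise ellipticity together with smooth dependence on the base point $q\in\Sigma$ (the resolvent $(\mu-\lambda_m)^{-1}$ being a constant on each eigenspace). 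When $\mu<0$ all $\lambda_m\le 0=\lambda_0$ but the eigenvalues are $0,-( k-1),\dots$, so $\mu$ avoids the spectrum and the first clause applies. For the final clause, if $-\mu=\lambda_p$ is an eigenvalue but $p\not\equiv i\pmod 2$, then the resonant component $\mathcal{H}_p$ never appears in the decomposition of $F\in\mathcal{Y}_i$ (all components have parity $\equiv i$), so $\mu-\lambda_m\ne 0$ for every $m$ that actually occurs; the same fiberwise inversion produces a unique $w\in\mathcal{Y}_i$, the point being precisely that the parity mismatch decouples the resonance from the forcing term.
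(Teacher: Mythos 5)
Your overall strategy coincides with the paper's on all four parts: (a) via the algebra structure coming from the partition-of-unity/tensor-product description and Proposition \ref{contractprop}; (b) via restriction of homogeneous harmonic polynomials plus factoring out the even power $t^{i-m}=(t^2)^{(i-m)/2}$; (d) via fiberwise spherical-harmonic decomposition and componentwise inversion, with the parity mismatch decoupling the resonant mode. In (c) you differ mildly from the paper: you Taylor-expand $f$ in the normal Fermi coordinates $y^a$ and decompose the degree-$j$ homogeneous part of the polynomial in $\omega$ via the Stein--Weiss decomposition, whereas the paper iterates the radial derivative $\partial_t = c_a g^{ab}\partial_b$ and exhibits the $j$-th Taylor coefficient as a degree-$j$ polynomial in the functions $c_a\in\mathcal{H}_1(\widetilde{\Sigma})$ with coefficients smooth on $\Sigma$. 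These are equivalent, and yours is arguably more direct; both hinge on the same parity observation.

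However, part (d) as written contains a sign error that invalidates your justification of one clause. With your convention $\lambda_m = -m(m+k-2)$ for the eigenvalues of $\Delta_{\mathring{b}}$, the componentwise equation is $(\mu + \lambda_m)w_m = F_m$, not $(\mu - \lambda_m)w_m = F_m$, so resonance occurs exactly when $-\mu = \lambda_m$ (as in the statement of the proposition), not when $\mu = \lambda_m$. This matters: your argument for the clause ``in particular, this holds if $\mu<0$'' is that ``$\mu$ avoids the spectrum,'' which is false under your own convention --- for instance $\mu = -(k-1) = \lambda_1$ is negative and \emph{is} an eigenvalue. The correct reasoning is that when $\mu < 0$ one has $-\mu > 0$, and since all eigenvalues $\lambda_m$ are nonpositive, $-\mu$ is not an eigenvalue, so the nonresonant case applies; the correct resolvent, as in the paper, is $w = \sum_j (\mu - j(j+k-2))^{-1}H_j = \sum_j(\mu + \lambda_j)^{-1}H_j$. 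The same correction is needed in your final clause: the condition on the occurring modes is $\mu + \lambda_m \neq 0$, which holds because $\mu + \lambda_m = 0$ would force $\lambda_m = \lambda_p$, hence $m = p$, excluded by parity. With these sign repairs your part (d) agrees with the paper's proof, including the uniqueness claims (any two solutions differ by an element of $\mathcal{H}_p(\widetilde{\Sigma})$, which has the wrong parity to lie in $\mathcal{Y}_i(\widetilde{\Sigma})$).
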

\begin{proof}
	\ref{hypropprod} This follows from Proposition \ref{hsigmaprop}\ref{hsigmaproppartoplus}.

	\ref{hypropsmooth} We first show this for $Y = H_i \in \mathcal{H}_i(\widetilde{\Sigma})$. Moreover, we will show it locally, which is of course enough. So by Proposition \ref{hsigmaprop}\ref{hsigmapropparttens}, 
	there is a neighborhood $U$ in $\Sigma$ such that, on $\beta^{-1}(U)$, we may
	consider $H_i(x,\omega) = f(x)H(\omega)$, where $f \in \beta^*C^{\infty}(\Sigma)$ and $H \in \mathcal{H}_i(\spheres^{k - 1})$. Now, by (\ref{distform}), the distance $t$ takes the same form in Fermi coordinates as in
	$\mathbb{R}^{k}$; and so $t^iH(\omega)$ defines a homogeneous harmonic polynomial in the coordinates $x^{n + 1},\cdots,x^{n + k}$ (or rather their pullbacks by $\beta$). Thus, $t^if(x)H(\omega)$ defines a smooth
	function on $U$.

	For $H_{i - 2j} \in \mathcal{H}_{i - 2j}(\widetilde{\Sigma})$, we have $t^iH_{i - 2j} = t^{2j}(t^{i - 2j}H_{i - 2j})$. The same argument as above works for the parenthetical factor, while
	$t^{2j}$ is already a smooth function on $M$.

	\ref{hypropdiff} We prove this locally in Fermi coordinates, again by partition of unity. Without loss of generality, then, we suppose $f$ is supported in a Fermi coordinate patch $U$.
	Suppose we write our coordinate system as $(x^1,\cdots,x^n,y^{n + 1},\cdots,y^{n + k})$. We write 
	\begin{equation}
		\label{ca}
		c_a = \frac{y^a}{t} = \partial_at 
	\end{equation}
	on the complement of $\Sigma$; the pullbacks of the $c_a$ by $\beta$ extend to smooth functions on
	$\widetilde{\Sigma}$. Now, since $\partial_t = \grad_{g}(t)$, we have
	\begin{equation}\label{delt}
		\frac{\partial}{\partial t} = c_ag^{ab}\frac{\partial}{\partial y^b}.
	\end{equation}
	By definition of the spherical coordinates $\omega^{\mu}$, the vector field $\partial_t$ commutes with $c_a$. Therefore, for a smooth function $f:U \to \mathbb{R}$, we have (away from $\Sigma$)
	\begin{align*}
		\partial_tf &= \sum_{a,b = n + 1}^{n + k}c_ag^{ab} \partial_bf\\
		\partial_t^2f &= \sum_{a_1,a_2,b_1,b_2 = n + 1}^{n + k}c_{a_1}c_{a_2}g^{a_2b_2}\partial_{b_2}(g^{a_1b_1}\partial_{b_1}f)\\
		&\vdots\\
		\partial_t^lf &= \sum_{a_1,\ldots,a_l,b_1,\ldots,b_l = n + 1}^{n + k}c_{a_1}\cdots c_{a_l}g^{a_lb_l}\partial_{b_l}(g^{a_{l - 1}b_{l - 1}}\partial_{b_{l - 1}}(\cdots g^{a_1b_1}\partial_{b_1}f)).
	\end{align*}
	The coefficients of the $c_a$ are smooth functions on $U$, and so in particular restrict to smooth functions on $\Sigma$; that is, in particular, functions of $x^i$ only.
	Now, by Taylor's Theorem, we may write
	\begin{equation*}
		\beta^*f(x,t,\omega) = f_0 + tf_1 + \cdots + t^lf_l + O(t^{l + 1}),
	\end{equation*}
	where $f_j = \frac{1}{j!}\partial_t^j(\beta^*f)|_{t = 0}$. The latter is the restriction to $t = 0$ of the unique smooth extension of $\frac{1}{j!}\beta^*\big( \partial_t^jf \big)$; so by the above, we see that it
	is a homogeneous polynomial in the $c_a$, with coefficients smooth functions on $\Sigma$. But each $c_a$ is itself in $\mathcal{H}_1(\widetilde{\Sigma})$, so the result now follows by Proposition
	\ref{hsigmaprop}.

	\ref{hypropsolve} Let $F \in \mathcal{Y}_i(\widetilde{\Sigma})$. Write $F = \sum_{j = 0}^iH_j$, where $H_j \in \mathcal{H}_j(\widetilde{\Sigma})$. But then observe that
	\begin{equation*}
		w = \sum_{j = 0}^{i}\frac{1}{\mu - j(j + k - 2)}H_j
	\end{equation*}
	satisfies $(\Delta_{\mathring{b}} + \mu)w = F$. Since $-\mu$ is not an eigenvalue of $\Delta_{\mathring{b}}$, such $w$ is unique.

	If $-\mu = \lambda_p$, then the same formula with the term corresponding to $j = p$ omitted still gives a solution in $\mathcal{Y}_i$. Any other solution will differ by some nonzero function in
	$\mathcal{H}_p(\widetilde{\Sigma})$, which will therefore not be in $\mathcal{Y}_i(\widetilde{\Sigma})$.
\end{proof}

In fact, the converse of Proposition \ref{hyprop}\ref{hypropdiff} is also true, and will be sufficiently important to us that we state it separately.

\begin{proposition}\label{smoothprop}
	Suppose $f \in C^{\infty}(\widetilde{M})$. Then $f = \beta^*u$ for some $u \in C^{\infty}(M)$ if and only if
	\begin{equation}\label{smoothcond}
		\left.\frac{\partial^jf}{\partial t^j}\right|_{t = 0} \in \mathcal{Y}_j(\widetilde{\Sigma})\text{ for all } j.
	\end{equation}
\end{proposition}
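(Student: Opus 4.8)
The forward implication is precisely Proposition \ref{hyprop}\ref{hypropdiff}, so the plan is to treat only the converse: given $f \in C^\infty(\widetilde{M})$ with $\partial_t^j f|_{t = 0} \in \mathcal{Y}_j(\widetilde{\Sigma})$ for every $j$, I must produce $u \in C^\infty(M)$ with $f = \beta^*u$. Since $\beta$ restricts to a diffeomorphism $\widetilde{M} \setminus \widetilde{\Sigma} \to M \setminus \Sigma$, the function $u := f \circ \beta^{-1}$ is already well defined and smooth off $\Sigma$; the entire content is local smoothness across $\Sigma$. I would therefore fix a point of $\Sigma$ and work in a polar Fermi chart with coordinates $(x^i,\omega^\mu,t)$ and associated Fermi coordinates $(x^i,y^a)$, where $t = |y|$ and $\omega = y/|y|$. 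I would also note at the outset that the $j = 0$ hypothesis forces $f|_{\widetilde{\Sigma}} = f_0 \in \mathcal{H}_0(\widetilde{\Sigma})$ to be fiberwise constant, which is exactly what makes the value of $u$ on $\Sigma$ well defined.

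The first step is to identify the $t$-Taylor coefficients of $f$ as homogeneous polynomials. Writing $f \sim \sum_{j \geq 0} t^j f_j$ with $f_j = \tfrac{1}{j!}\partial_t^j f|_{t = 0} \in \mathcal{Y}_j(\widetilde{\Sigma})$, I would invoke the proof of Proposition \ref{hyprop}\ref{hypropsmooth} together with (\ref{distform}) to see that each $t^j f_j = \beta^* v_j$ where $v_j$ is, in the normal coordinates $y^a$, a homogeneous polynomial of degree $j$ with coefficients smooth in $x$. Indeed a summand $H_m \in \mathcal{H}_m(\widetilde{\Sigma})$ of $f_j$ (so $m \leq j$ and $m \equiv j \pmod{2}$) gives $t^j H_m = (|y|^2)^{(j - m)/2}(t^m H_m)$, and $t^m H_m$ is a homogeneous harmonic polynomial of degree $m$. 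This is precisely where the hypothesis $f_j \in \mathcal{Y}_j$ is used: the degree bound $m \leq j$ keeps the power of $t$ nonnegative and the parity $m \equiv j$ makes $(j - m)/2$ an integer, so that no leftover (nonsmooth) powers of $t$ or odd trigonometric factors survive --- exactly the two failure modes of the examples in Section \ref{funcsec}.

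With this in hand, the sequence $\{v_j\}$ is the sequence of homogeneous components of a formal power series in $y$ with smooth coefficients in $x$, so by Borel's lemma (with smooth dependence on the parameter $x$, and a partition of unity over the compact $\Sigma$) there is $\tilde{u} \in C^\infty$ whose $y$-Taylor expansion at $\Sigma$ is $\sum_j v_j$. Then $\beta^*\tilde{u} \sim \sum_j t^j f_j$, so the remainder $\rho := f - \beta^*\tilde{u} \in C^\infty(\widetilde{M})$ vanishes to infinite order at $\widetilde{\Sigma}$, i.e.\ $\rho = O(t^\infty)$. To descend $\rho$, I would set $\sigma = \rho \circ \beta^{-1}$ off $\Sigma$ and $\sigma|_\Sigma = 0$, and expand the ambient normal derivative in the polar frame as $\partial_{y^a} = \omega^a \partial_t + t^{-1}(\cdots)\partial_{\omega^\mu}$, so that each $y$-derivative costs at most one factor of $t^{-1}$. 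Since $\partial_t$, $\partial_{\omega^\mu}$, and $\partial_{x^i}$ commute and annihilate the (vanishing) $t$-Taylor coefficients of $\rho$, every such derivative of $\rho$ is again flat, hence $O(t^\infty)$; therefore any $m$ $y$-derivatives of $\sigma$ are $O(t^{-m})\cdot O(t^\infty) = O(t^\infty)$ and extend continuously by $0$ to $\Sigma$. The standard iterated mean-value argument then yields $\sigma \in C^\infty(M)$, flat at $\Sigma$, whence $f = \beta^*(\tilde{u} + \sigma)$ near $\Sigma$; since this local descent agrees with the canonical $f \circ \beta^{-1}$ off $\Sigma$, the local pieces glue to a global $u \in C^\infty(M)$ with $\beta^* u = f$.

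I expect Step three --- the descent of the flat remainder --- to be the main obstacle. Steps one and two are essentially bookkeeping layered on top of the already-established Propositions \ref{hsigmaprop} and \ref{hyprop}, whereas the remainder argument is the genuinely analytic point: one must verify that infinite-order vanishing on the blow-up survives push-forward, which hinges on the (standard but delicate) observation that differentiating in the ambient normal directions accumulates at most one power of $t^{-1}$ per derivative, and that flatness is preserved under the commuting coordinate vector fields of the polar chart.
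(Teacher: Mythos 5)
Your proof is correct, and its two key ingredients are the same ones the paper uses: Proposition \ref{hyprop}\ref{hypropsmooth} (via its proof, to recognize $t^jf_j$ as the pullback of a polynomial in the normal Fermi coordinates --- this is exactly where the degree bound and parity in the definition of $\mathcal{Y}_j$ enter), and the counting principle that each ambient normal derivative costs at most one factor of $t^{-1}$. But you organize the remainder step differently from the paper, and the difference is worth noting. The paper never sums the whole Taylor series: for each fixed $j$ it writes $f = f_0 + tf_1 + \cdots + t^jf_j + t^{j+1}H_{j+1}$, descends the polynomial part to some $u_j \in C^{\infty}(M)$ by \ref{hypropsmooth}, and observes that the leftover $t^{j+1}h_j$ has $p$-th covariant derivatives of size $O(t^{j+1-p})$, so that $u \in C^j(M)$; since $j$ is arbitrary, $u \in C^{\infty}(M)$. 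This finite-order truncation makes Borel's lemma, the globalization-by-partition-of-unity of the Borel construction, and the descent lemma for flat functions all unnecessary --- the same $t^{-1}$-per-derivative counting you use for the flat remainder is applied once, at finite order. What your route buys in exchange for the heavier machinery is a sharper structural conclusion: $f$ splits globally as the pullback of a smooth function on $M$ realizing the full formal series plus a remainder vanishing to infinite order, with the descent of flat functions isolated as a reusable lemma. Both are complete proofs; the paper's is the more economical, yours the more informative about the structure of $f$ near $\widetilde{\Sigma}$.
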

\begin{proof}
	For the remaining direction, suppose (\ref{smoothcond}) holds. Clearly, there exists $u \in C^{\infty}(M \setminus \Sigma)$ such that $f|_{\widetilde{M} \setminus \widetilde{\Sigma}} = \beta^*u$; so we need only
	show that $u$ extends smoothly to $M$. Let $j \geq 1$. By hypothesis and Taylor's theorem, we may write
	\[f = f_0 + tf_1 + \cdots + t^jf_j + t^{j + 1}H_{j + 1},\]
	where $H_{j + 1} \in C^{\infty}(\widetilde{M})$ and $f_i \in \mathcal{Y}_i(\widetilde{\Sigma})$. By Proposition \ref{hyprop}\ref{hypropsmooth}, we have $f_0 + tf_1 + \cdots + t^kf_k \in \beta^*C^{\infty}(M)$; let us denote
	this function by $\beta^*u_j$, where $u_j \in C^{\infty}(M)$. Thus, $u = u_j + t^{j + 1}h_j$ for some bounded function $h_j \in C^{\infty}(M \setminus \Sigma)$ with the property
	that $|\nabla^ph_j|_g = O(t^{-p})$. It follows that $u \in C^j(M)$. Since $j$ is arbitrary,
	this yields the result.
\end{proof}

We note that any first eigenfunction $u \in \mathcal{H}_1(\widetilde{\Sigma})$ defines a linear function $\Lambda u$ on $N\Sigma$. This is because it is the restriction of a \emph{linear} polynomial on $N\Sigma$, i.e., a one-form
along $\Sigma$.  Now, recall the definition (\ref{ca}) of $c_a$; on each $\spheres^{k - 1}$, it is just the restriction of $y^a$ to the sphere. In particular, then, the set $\left\{ c_a \right\}_{a = n + 1}^{n + k}$ spans
$\mathcal{H}_1(\widetilde{\Sigma})$. Thus, any element $u \in \mathcal{H}_1(\widetilde{\Sigma})$ may be written $u = u^ac_a$, the Einstein convention being in effect, where each $u^a$ is a function
on $\Sigma$ (in particular, constant on $\spheres^{k - 1}$). We then get the following useful formulas for the one-form pairing.
\begin{lemma}
	\label{oneformlem}
	Suppose that $u \in \mathcal{H}_1(\widetilde{\Sigma})$ and that $X \in N_p\Sigma$. Then $\Lambda u$ acts on $X$ by
	\begin{align}
		\Lambda u(X) &= kX^au(p)^b\fint_{SN_p\Sigma}c_ac_bdV_{\bo}\label{oneformlemeq1}\\
		&= k\fint_{SN_p\Sigma}u(Y)\langle Y,X\rangle dV_{\bo}(Y).\label{oneformlemeq2}
	\end{align}
\end{lemma}
\begin{proof}
	From the equations $\langle dy^a,dy^b\rangle = g^{ab}$ and $y^a = tc_a$, we may conclude that along $\widetilde{\Sigma}$,
	\begin{equation}
		\label{dcaeq}
		\langle dc_a,dc_b\rangle_{\mathring{b}} = g^{ab} - c_ac_b = \delta_{ab} - c_ac_b.
	\end{equation}
	Now, because $c_a \in \mathcal{H}_1(\widetilde{\Sigma})$, it thus follows from (\ref{dcaeq}) and the product rule for $\Delta_{\bo}(c_ac_b)$ that $c_ac_b - \frac{1}{k}\delta_{ab} \in \mathcal{H}_2(\widetilde{\Sigma})$.
	Thus,
	\begin{align*}
		X^au(p)^b\int_{SN_p\Sigma}c_ac_bdV_{\bo} &= \frac{1}{k}x^au(p)^b\int_{SN_p\Sigma}\delta_{ab}dV_{\bo}\\
		&= \sum_{a = n + 1}^{n + k}\frac{1}{k}X^au(p)^a\vol_{\bo}(\spheres^{k - 1})\\
		&= \sum_{a = n + 1}^{n + k}\frac{1}{k}\vol_{\bo}(\spheres^{k - 1})u^ac_a(X)\\
		&= \frac{1}{k}\vol_{\bo}(\spheres^{k - 1})\Lambda u(X).
	\end{align*}
	from which the first equation follows. The second formula follows quickly from the first.
\end{proof}

The results and notation of this section so far are for the case $k > 1$. For notational uniformity, we also define these notations for the case $k = 1$. In this case, the unit normal sphere is just
$\spheres^0$, which does not have differential operators defined on it. In this case, we simply define $\mathcal{H}_j(\widetilde{\Sigma})$ to be functions on $\widetilde{\Sigma}$ that are even
under the antipodal map if $j$ is even, and those that are odd if $j$ is odd. The $\mathcal{Y}_j(\widetilde{\Sigma})$ are now defined by $\mathcal{Y}_j(\widetilde{\Sigma}) = \mathcal{H}_j(\widetilde{\Sigma})$. 
Then the $\pi_j$ are still projections onto one of the two spaces. It remains the case that $\mathcal{H}_0(\widetilde{\Sigma})$ may be canonically identified with $C^{\infty}(\Sigma)$. In this setting,
we take (\ref{oneformlemeq1}) as the definition of $\Lambda u$, where the integral is now just a sum. Finally, in this case, we set $\Delta_{\bo} = 0$.

\section{Smoothness Theorems}\label{smoothsec}
In this section, we prove smoothness theorems about three important functions.

\subsection{Initial smoothness of the singular Yamabe function}\label{smoothsysec}
Recall that a singular Yamabe function on $M$ is a function $u$ such that the metric $g^+ = u^{-2}g$ has constant scalar curvature $(n + k - 1)(k - n - 2)$ and such that $u|_{\Sigma} = 0$.
By the usual conformal change formula for scalar curvature, this is equivalent to $u$ satisfying the PDE
\begin{equation}\label{leq}
	\begin{split}
		0 = 2L[u] &:= (n + 2 - k) - (n + k)|du|_g^2\\
		&\qquad\qquad\qquad+ 2u\Delta_gu + \frac{1}{n + k - 1}R_gu^2,\\
		u|_{\Sigma} &= 0.
	\end{split}
\end{equation}
If $n - k + 2 > 0$, so that the prescribed curvature is negative, it is a theorem of \cite{am88} that $u$ always exists and is unique.
In this case, it follows by the main result in \cite{maz91sy} that $\beta^*u$ is polyhomogeneous on $\widetilde{M}$; that is, $\beta^*u$ has an asymptotic expansion of the form
\begin{equation*}
	\beta^*u \sim \sum_{j = 1}^{\infty}\sum_{l = 0}^{p_j}\sum_{m = 0}^{q_{j,l}}a_{j,l,m}t^{j + \delta_l}(\log t)^m,
\end{equation*}
where we have $\delta_0 = 0$, $0 < \delta_l < 1$ (for $l > 0$), and $a_{j,l,m} \in C^{\infty}(\widetilde{\Sigma})$. It also follows from \cite{maz91sy} that $p_j = 0$ for $j \leq n$ and $q_{j,l} = 0$ for $j \leq n + 1$; 
that is, $\beta^*u$ is smooth on $\widetilde{M}$ up to the appearance of
a term of the form $a_{n + 2,1,0}t^{n + 1 + \delta}$ (where $\delta < 1$), with a log term following at exactly order $t^{n + 2}\log t$. Also, $a_{1,0,0} \equiv 1$. In the non-negatively curved case $n - k + 2 \leq 0$, we choose
to consider formal solutions, as discussed in the introduction.

It is our goal in this section to show that, in fact, up to the order $t^{n + \delta}$, the function $\frac{u}{t}$ is smooth on $M$ itself.
Since the two arguments are intimately related, we develop at the same time the existence and smoothness (on $M$) of formal expansions as in (\ref{formaleq}), thus also proving Theorem \ref{expandthm}. 
We turn for now to the development of such formal expansions, initially with no restriction on $n$ and $k$. 
Any exact solution that is polyhomogeneous will necessarily agree with this formal expansion, at least up to the first formally undetermined order.

We may write
\begin{equation*}
	u = tv = t(v_0 + tv_1 + \cdots).
\end{equation*}
We assume by hypothesis that $v_0 = 1$, and we restrict ourselves to formal solutions with this property. To construct a formal solution, we determine $v$ iteratively, order-by-order.   
To this end, we define for $s > 0$ the \emph{indicial operator} of order $s$, $I_s:C^{\infty}(\widetilde{\Sigma}) \to
C^{\infty}(\widetilde{\Sigma})$, by
\[I_s[\varphi] := t^{-s}(L[t(v + t^s\varphi)] - L[tv])|_{t = 0},\]
which is independent of $v$. A straightforward calculation shows that
\[I_s[\varphi] = \Delta_{\bo}\varphi + (s^2 - ns - (n - k + 2))\varphi.\]
It will also be useful to record the formula
\begin{equation}\label{logeq}
	\begin{split}
		L[t(v + t^{s}(\log t)^p\varphi)] - L[tv] =&\ t^s(\log t)^pI_s[\varphi]\\
		&+ p(2s - n)t^s(\log t)^{p - 1}\varphi\\
		&+ p(p - 1)t^s(\log t)^{p - 2}\varphi + o(t^s).
\end{split}
\end{equation}
In particular,
\begin{equation}\label{log1eq}
	L[t(v + t^s(\log t)\varphi)] - L[tv] = t^s(\log t)I_s[\varphi] + (2s - n)t^s\varphi + o(t^s).
\end{equation}

\begin{lemma}
	\begin{enumerate}[(a)]
		\item\label{partneg} If $n - k + 2 > 0$, then $I_s$ is injective for $0 \leq s \leq n$. 
		\item\label{partpos} If $n - k + 2 \leq 0$, then there are two values of $\gamma < n$ such that $I_{\gamma}$ fails to be injective. Specifically:
			\begin{enumerate}[(i)]
				\item Suppose $n$ is even. If $k \in E_n$ with $k = n + 2 + 2np - 4p^2$, then $I_{2p}$ and $I_{n - 2p}$ each have one-dimensional kernels. If $k \in O_n$ with
					$k = 2n + 1 + 2(n - 2)q - 4q^2$, then $I_{2q + 1}$ and $I_{n - 2q-1}$ both have one-dimensional kernels. The indicial operator is injective for all other values of $s$
					with $s \leq n$.

					If $k \notin E_n \cup O_n$, then $I_{\gamma}$ and $I_{n - \gamma}$ fail to be injective for some non-integral $\gamma \in (0,n)$.
				\item Suppose $n$ is odd.
					If $k \in E_n$ with $k = n + 2 + 2np - 4p^2$, then $I_{2p}$ and $I_{n - 2p}$ each have one-dimensional kernels. The indicial operator is injective for all other values of $s$
					with $s \leq n$. If $k = n + 2$, then $I_0$ and $I_n$ each have one-dimensional kernels.
					
					If $k \notin O_n$, then $I_{\gamma}$ fails to be injective for some non-integral $\gamma \in (0,n)$.
			\end{enumerate}
	\end{enumerate}
\end{lemma}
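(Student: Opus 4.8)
The plan is to diagonalize the indicial operator fiberwise against spherical harmonics, which collapses the entire statement into a single scalar quadratic. Using the fiberwise decomposition of Proposition \ref{hsigmaprop}, any $\varphi \in C^{\infty}(\St)$ splits into pieces lying fiberwise in $\mathcal{H}_j(\spheres^{k-1})$, on which $\Delta_{\bo}$ acts by the eigenvalue $-j(j+k-2)$. Writing $\phi(s) := s^2 - ns - (n-k+2)$, the given formula shows that $I_s$ acts on the degree-$j$ part by the scalar $\phi(s) - j(j+k-2)$, so $I_s$ fails to be injective precisely when $\phi(s) = j(j+k-2)$ for some $j \in \mathbb{N}_0$, the kernel then being the span of the relevant degree-$j$ harmonics. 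In every integral case below the contributing index is $j=0$, whose fiberwise kernel is the one-dimensional space of constants; this is the meaning of the ``one-dimensional kernel'' assertion. Thus the whole lemma reduces to locating the solutions $s \in [0,n]$ of $\phi(s) = j(j+k-2)$.

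For part (a) (where the operative hypothesis is $n-k+2>0$, i.e.\ negative prescribed curvature), I observe that $\phi$ is an upward parabola symmetric about $s=n/2$ with $\phi(0)=\phi(n)=-(n-k+2)$, so on $[0,n]$ its maximum is $-(n-k+2)<0$, while $j(j+k-2)\ge 0$ for every $j\ge 0$. The two sides can never coincide, so $I_s$ is injective throughout $[0,n]$.

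For part (b), with $n-k+2 \le 0$, the first move is to eliminate all $j \ge 1$. On $[0,n]$ the maximum of $\phi$ is now $k-n-2 \ge 0$, whereas $j \mapsto j(j+k-2)$ is strictly increasing (as $k \ge n+2 \ge 4$), so its smallest value for $j\ge 1$ is $k-1 > k-n-2$. Hence $\phi(s) < j(j+k-2)$ on $[0,n]$ for all $j\ge 1$, and only $j=0$ can produce a kernel. It then remains to solve $\phi(s)=0$, whose roots are $\gamma = \tfrac12\bigl(n - \sqrt{n^2 + 4(n-k+2)}\bigr)$ and $n-\gamma$, symmetric about $n/2$ and summing to $n$. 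These are real and lie in $[0,n]$ exactly when $n+2 \le k \le n+2+\tfrac{n^2}{4}$; beyond this range the discriminant is negative and $I_s$ is injective for all real $s$, which is a (necessary) sharpening of the printed statement and is consistent with the finiteness of exceptional codimensions noted in the introduction.

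Finally I would read off integrality and parity. Solving $\phi(\gamma)=0$ for $k$ gives $k = n+2+n\gamma-\gamma^2$, which is exactly $P_n(p)$ when $\gamma=2p$ and $Q_n(q)$ when $\gamma=2q+1$; so an even (resp.\ odd) integer root places $k$ in $E_n$ (resp.\ $O_n$), and the admissible range $0\le\gamma\le n/2$ for the smaller root translates into the stated ranges of $p$ and $q$ (with a harmless redundancy in the parametrization when $4\mid n$). The one genuinely delicate point, and where I expect the real bookkeeping to reside, is the parity dichotomy between even and odd $n$: because $\gamma+(n-\gamma)=n$, for $n$ even the two roots share a parity, so even and odd integer roots give the honestly distinct families $E_n$ and $O_n$; whereas for $n$ odd the two roots have opposite parity, so any integer root simultaneously produces one even and one odd value, which is precisely why $O_n = E_n \cup \{n+2\}$ and why the critical case $k=n+2$ (with roots $0$ and $n$) must be singled out. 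Matching ``$n^2+4(n-k+2)$ is a perfect square of the correct parity'' to these two regimes, and checking the endpoint behavior, completes the argument.
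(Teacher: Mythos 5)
Your proposal is correct and takes essentially the same approach as the paper: decompose fiberwise into spherical harmonics, show that only the $j=0$ eigenvalue can be attained by $s^2-ns-(n-k+2)$ on $[0,n]$, and then analyze integrality and parity of the roots of that quadratic, including the even/odd-$n$ parity dichotomy that produces $E_n$, $O_n$, and $O_n = E_n \cup \{n+2\}$. The only differences are cosmetic: you rule out $j \ge 1$ by comparing the maximum $k-n-2$ of the quadratic on $[0,n]$ with the smallest nonzero eigenvalue magnitude $k-1$, whereas the paper notes that the $j=1$ roots are $s=-1$ and $s=n+1$; and you correctly read the hypotheses as $n-k+2>0$ and $n-k+2\le 0$ (fixing the sign typo that the paper's own proof also implicitly corrects), and you correctly flag that for $k>n+2+\tfrac{n^2}{4}$ the discriminant is negative and no failure of injectivity occurs, matching the paper's remark that for $k \gg n$ ``this will never happen at all.''
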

\begin{proof}
	If $n - k + 2 > 0$, then $s^2 - ns - (n - k + 2) < 0$ for $0 \leq s \leq n$, so the claim follows by Proposition \ref{hyprop}\ref{hypropsolve}.

	In order not to be injective, $I_s$ must be $\Delta_{\bo} - \lambda$ for some $\lambda \in \spec(\Delta_{\bo})$. Recall that the spectrum is $\left\{ \lambda_j \right\}$ with
	$\lambda_j = -j(j + k - 2)$. First, observe that for all $n, k$, we have $I_{n + 1} = \Delta_{\bo} - \lambda_1$. Since the other value of $s$ for which $I_s = \Delta_{\bo} - \lambda_1$ is
	$s = -1$, it follows that we can never have $s^2 - ns - (n - k + 2) = \lambda_j$ for $j \geq 2$ and $s \in [0,n]$. Thus, we need only consider when $I_s = \Delta_{\bo} - \lambda_0 = \Delta_{\bo}$.

	This occurs -- it is easy to see -- when 
	\begin{equation}
		s = \frac{n \pm \sqrt{n^2 + 4n -4k + 8}}{2}.
	\end{equation}
	Now for $k >> n$, this will never happen at all. For most pairs $n, k$, it will happen only at non-integer values.
	But we wish to characterize the values of $n,k$ for which integral values of $s$ are obtained, paying attention also to the parity of $s$ when this occurs.

	Integer roots, of course, will always occur in pairs. Suppose first that $n$ is even. Then $j$ and $n - j$ have the same parity, so both solutions will have the same parity. Let us find, say, the smaller:
	to find situations when $\frac{1}{2}(n - \sqrt{n^2 + 4n - 4k + 8})$ is an even integer, we set it equal to $2p \leq \frac{n}{2}$. Solving the equation, we find that $k \in E_n$, and conversely,
	every such $k$ gives a solution. We repeat similarly with $s = 2q + 1$ and obtain $O_n$.

	For odd $n$, every integer solution pair will come in an even/odd pair. Solving for the even one and repeating the arithmetic yields the claim.
\end{proof}

We will make use of the following nice fact about $L$.
\begin{lemma}\label{smoothlem}
	If $v \in C^{\infty}(M)$, then $L[tv] \in C^{\infty}(M)$.
\end{lemma}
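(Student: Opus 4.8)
The plan is to work entirely in the Fermi coordinates $(x^i,y^a)$ of Section~\ref{setupsec}. These are genuine smooth coordinates on $M$ near $\Sigma$, so to prove the lemma it suffices to exhibit $L[tv]$ as a smooth function of $(x^i,y^a)$ there (away from $\Sigma$ smoothness is automatic). The one subtlety is that $t=\sqrt{\sum_a(y^a)^2}$ is itself not smooth at $\Sigma$; the content of the lemma is that the particular combination in \eqref{leq}, with $u=tv$, conspires to cancel all of the resulting singular behavior.

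First I would substitute $u=tv$ into \eqref{leq} and expand each term by the product and Leibniz rules, writing $d(tv)=v\,dt+t\,dv$ and $\Delta_g(tv)=t\Delta_g v+v\Delta_g t+2\langle dt,dv\rangle_g$. The constant term and the curvature term $\frac{1}{n+k-1}R_g t^2v^2$ are manifestly smooth (here $t^2$ is a polynomial in the $y^a$). Collecting what remains, every term either carries an explicit smooth factor or reduces to one of three potentially singular expressions: $|dt|_g^2$, the quantity $t\langle dt,dv\rangle_g$, and $t\Delta_g t$. Using property \eqref{distform} one has $\partial_i t=0$ and $\partial_a t=y^a/t$, so the eikonal identity $|dt|_g^2=g^{ab}(\partial_a t)(\partial_b t)=1$ disposes of the first, while $t\langle dt,dv\rangle_g=y^a g^{aB}\partial_B v$ is visibly smooth, disposing of the second.

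The main obstacle is the third term, $t\Delta_g t$, since $\Delta_g t$ itself blows up like $(k-1)/t$ at $\Sigma$. To handle it I would write $\Delta_g t=\frac{1}{\sqrt{G}}\partial_A\big(\sqrt{G}\,g^{Ab}\partial_b t\big)$ via the divergence formula, where $G=\det g$ is smooth and positive in Fermi coordinates, and introduce the smooth quantity $W^A:=\sqrt{G}\,g^{Ab}y^b$. Since $\partial_b t=y^b/t$, one has $\sqrt{G}\,g^{Ab}\partial_b t=W^A/t$, and differentiating produces a $1/t$ and a $1/t^2$ piece. The crucial cancellation comes from the eikonal equation in the form $g^{ab}y^a y^b=t^2$: it forces $W^A\partial_A t=\sqrt{G}\,t$, so the $1/t^2$ term collapses to exactly $-1/t$, leaving
\[
	t\Delta_g t=\frac{1}{\sqrt{G}}\,\partial_A\!\big(\sqrt{G}\,g^{Ab}y^b\big)-1,
\]
which is manifestly smooth on $M$ (as a sanity check, in the flat model this reads $k-1$). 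Feeding this back, $2tv\Delta_g(tv)$ contributes $2v^2\,(t\Delta_g t)$ plus smooth terms, and reassembling all the pieces yields $L[tv]\in C^\infty(M)$. I expect the only real work to be the bookkeeping leading to the displayed formula for $t\Delta_g t$; once the eikonal-driven cancellation is isolated, the remainder is routine.
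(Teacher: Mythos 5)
Your proof is correct and takes essentially the same approach as the paper: expand $L[tv]$ by the Leibniz rule, observe that every resulting term is manifestly smooth except the one containing $t\Delta_g t$, and resolve that term by the divergence cancellation. Your coordinate identity $t\Delta_gt = \frac{1}{\sqrt{G}}\partial_A\big(\sqrt{G}\,g^{Ab}y^b\big) - 1$ is precisely the Fermi-coordinate form of the paper's invariant computation $t\Delta_gt = \diver_g(t\grad_gt) - 1$, which the paper concludes is smooth because $t\grad_gt = \frac{1}{2}\grad_g(t^2)$ is a smooth vector field.
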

\begin{proof}
	We have
	\begin{align*}
		2L[tv] &= (n + 2 - k) - (n + k)t^2|dv|_g^2 - (n + k)v^2\\
		&\quad - 2(n + k)tv\langle dt,dv\rangle + 2t^2v\Delta_gv + 2tv^2\Delta_gt\\
		&\quad + 4tv\langle dt,dv\rangle + \frac{1}{n + k - 1}R_gt^2v^2.
	\end{align*}
	The first three terms, the fifth term, and the last term are manifestly smooth since $t^2$ is smooth. Since $t \frac{\partial}{\partial t} = t\grad_gt = \frac{1}{2}\grad_g(t^2)$ is a smooth vector field, 
	the fourth and seventh terms
	are likewise smooth. This leaves only the sixth term to analyze. Its smoothness is equivalent to the smoothness of
	\begin{align}
		t\Delta_gt &= t\diver_g\grad_gt\notag\\
		&= \diver_g(t\grad_gt) - (\grad_gt)(t)\notag\\
		&= \diver_g(t\grad_gt) - 1.\label{tdelt}
	\end{align}
	Since, again, $t\grad_gt$ is a smooth vector field, this yields the claim.
\end{proof}

The following theorem immediately implies Theorem \ref{expandthm}, but its most important consequence
is that formal solutions $v$ are actually smooth functions on $M$ to $o(t^n)$
\begin{theorem}\label{usmooththm}
	Suppose $n \geq 2$.
	\begin{enumerate}[(a)]
		\item\label{noevenorodd} Suppose $k \notin (E_n \cup O_n)$. There exists a formal solution $v$ of the form
	\begin{equation}
		\label{vexp}
		v = 1 + tv_1 + t^2v_2 + \cdots + t^nv_n + t^{n + 1}(\log t)\mathcal{L} + t^{n + 1}v_{n + 1} + o(t^{n + 1})
	\end{equation}
	to the equation $L[tv] = O(t^{n + 2})$, where, $v_j,\mathcal{L} \in C^{\infty}(\widetilde{\Sigma})$. Such $v$ is unique to order $O(t^{n + 1})$.

	If $n$ is odd and $k > 1$, then $\mathcal{L} = 0$. In this case, $v$ may be made unique to order $o(t^{n + 1})$ by the condition that $\pi_1(v_{n + 1}) = 0$, which is a conformally invariant condition.

	If $n$ is even or $k = 1$, then $\mathcal{L}$ is a pointwise conformal invariant of weight $-(n + 1)$: if $\hat{g} = e^{2\omega}g$ is a conformally related metric and the expansion (\ref{vexp}) is made
	with respect to its distance function $\hat{t}$, then $\widehat{\mathcal{L}} = e^{-(n + 1)\omega}\mathcal{L}$.

	In all cases, if
	\begin{equation*}
		\bar{v} = 1 + tv_1 + t^2v_2 + \ldots + t^nv_n,
	\end{equation*}
	then $\bar{v} \in \beta^*C^{\infty}(M)$.

\item\label{oddcase} If $k \in O_n$, then the statement from \ref{noevenorodd} holds in all particulars, except that, in order to obtain uniqueness, we must require that 
	$\pi_0(v_{\nu}) = 0$ for each integer $\nu \in (0,n]$ for which $I_{\nu}$ fails to be injective. This is a conformally invariant condition.
	\item For $k \in E_n$, let $\nu \in (0,n]$ be the smallest nonzero integer for which $I_{\nu}$ fails to be injective. If $\nu \neq \frac{n}{2}$, then there exists a formal solution $v$ of the form
	\begin{equation}\label{logexpeq}
		v = 1 + tv_1 + \cdots + t^{\nu - 1}v_{\nu - 1} + t^{\nu}(\log t)A + t^{\nu}v_{\nu}
	\end{equation}
	to the equation $L[tv] = O(t^{\nu + 1})$, where $v_j, A \in C^{\infty}(\widetilde{\Sigma})$.
	If, on the other hand, $\nu = \frac{n}{2}$, there exists a formal expansion $v$ having the same form (\ref{logexpeq}) but with $(\log t)^2$ appearing in place of $\log t$.

	Moreover, $A$ is conformally invariant of weight $-\nu$.
	\end{enumerate}
\end{theorem}
We remark that the expansion (\ref{logexpeq}) could be continued arbitrarily high, but this would not be of present interest to us and is a standard argument. Of interest, however, is that for odd $n$ and with the
uniqueness-fixing condition in case \ref{noevenorodd}, the expansion could be extended \emph{uniquely} to infinite order, so long as $I_{\nu}$ remained injective for all $\nu$ (which depends on $n$ and $k$). Compare this
to the evenness condition for odd $n$ in the formal existence theory of Poincar\'{e}-Einstein metrics \cite{fg12}. See below for further discussion.
\begin{proof}
	First assume that $k \notin (E_n \cup O_n)$. We set $V_0 = v_0 = 1$. It is immediate that $L[tV_0] = O(t)$. Also, by Lemma \ref{smoothlem}, $L[tv_0] = tF_1 + O(t^2)$ is the pullback of a smooth function on $M$.
	We wish to solve 
	\begin{equation*}
		O(t^2) = L[tv_0 + t^2v_1] = t(I_1[v_1] + F_1) + O(t^2);
	\end{equation*}
	so we must solve $I_1(v_1) = -F_1$. Now, $I_1 = \Delta_{\bo} - (1 + 2n - k)$. Moreover, $F_1 \in \mathcal{Y}_1(\widetilde{\Sigma})$ by Proposition \ref{hyprop}\ref{hypropdiff}. 
	Now, $1 + 2n - k$ is not a spherical eigenvalue since $k \notin O_n$. Since $F_1 \in \mathcal{Y}_1(\St)$, it follows by
	by Proposition \ref{hyprop}\ref{hypropsolve} that there exists a unique $v_1 \in \mathcal{Y}_1(\widetilde{\Sigma})$ satisfying $I_1[v_1] = -F_1$. This $v_1$ is unique among all smooth functions.
	Set $V_1 = V_0 + tv_1$. By Proposition \ref{smoothprop}, $V_1$ is the pullback of a smooth function.

	For purposes of induction, assume that we have constructed $V_{j - 1}  = \sum_{i = 0}^{j - 1}t^iv_i \in \beta^{*}C^{\infty}(M)$ satisfying
	$L[tV_{j - 1}] = O(t^j)$ and unique. We may write $L[tV_{j - 1}] = t^{j}F_j + O(t^{j + 1})$. Since the right-hand side is smooth,
	we again have $F_j \in \mathcal{Y}_j(\widetilde{\Sigma})$. For each $j$, $I_{j}$ is an isomorphism on $C^{\infty}(\St)$ by hypothesis.
	By Proposition \ref{hyprop}\ref{hypropsolve}, we may uniquely solve $I_j[v_j] + F_j = 0$ for $v_j \in \mathcal{Y}_j(\widetilde{\Sigma})$. 
	Again by Proposition \ref{smoothprop}, we have $V_j \in \beta^*C^{\infty}(M)$. By induction, the procedure can be continued up through the construction of
	$V_n$; and $V_n \in \beta^*C^{\infty}(M)$, as claimed.

	Now, $L[tV_n] = t^{n + 1}F_{n + 1} + O(t^{n + 2})$; and $F_{n + 1} \in \mathcal{Y}_{n + 1}(\widetilde{\Sigma})$ by smoothness of $V_n$ and by Proposition \ref{smoothprop}. 
	To proceed further, we wish to solve $I_{n + 1}[v_{n + 1}] = -F_{n + 1}$. However, $I_{n + 1}$ is not injective, as it is $\Delta_{\bo} + k - 1$, and $1 - k$ is the first negative eigenvalue of $\Delta_{\bo}$. Now, if $n$
	is odd, then $n + 1$ is even, and in particular is the opposite parity from $1$; so by Proposition \ref{hyprop}\ref{hypropsolve}, if $k > 1$ there still exists a unique solution
	$v_{n + 1}$ in $\mathcal{Y}_{n + 1}(\widetilde{\Sigma})$; other solutions will exist, but (unlike this one) will not satisfy $\pi_1(v_{n + 1}) = 0$. Moreover, by Proposition \ref{smoothprop}, this condition is the unique choice
	for which we will have $V_{n + 1} \in \beta^*C^{\infty}(M)$. Conformal invariance of the property $\pi_1(v_{n + 1}) = 0$ follows from this fact and uniqueness, 
	since if $\hat{g} = e^{2\omega}g$, then $e^{\omega}u$ will be a formal solution for $\hat{g}$; and as we will show in section \ref{distfuncsec}, $\frac{\hat{t}}{t}$ is smooth on $M$.

	If $n$ is even, on the other hand, then $I_{n + 1}[v_{n + 1}] = -F_{n + 1}$ generally cannot be solved, as $\pi_1(F_{n + 1}) = 0$ will generally not hold. In this case, we must add a log term to subtract off
	$\pi_1(F_{n + 1})$. Let $\mathcal{L} = -\frac{1}{n + 2}\pi_1(F_{n + 1})$, and set $V_{n + 1,-1} = V_{n} + t^{n + 1}(\log t)\mathcal{L}$. It follows from (\ref{log1eq}) that
	$L[tV_{n + 1,-1}] = L[tV_{n}] - \pi_1(F_{n + 1})$. Thus, writing $L[tV_{n + 1,-1}] = t^{n + 1}F_{n + 1,-1}$, we have $\pi_1(F_{n + 1,-1}) = 0$. We may now proceed as before,
	adding a term $t^{n + 1}v_{n + 1}$ to obtain $V_{n + 1} = V_{n + 1,-1} + t^{n + 1}v_{n + 1}$ satisfying $L[tV_{n + 1}] = o(t^{n + 2})$. Note that we could fix $v_{n + 1}$ uniquely by requiring
	the condition that $\pi_1(v_{n + 1}) = 0$, but this condition is not conformally invariant: it is not necessary for smoothness, since $n + 1$ and $1$ have the same parity; and one may easily compute by expansion that,
	in general, it is not preserved under conformal change.

	If $n$ is odd and $k = 1$, we can solve through $V_n$ as before. Now, $L[tV_n] = t^{n + 1}F_{n + 1}$, and by smoothness, $F_{n + 1} \in \mathcal{Y}_{n + 1}(\widetilde{\Sigma})$, 
	which is to say, $F_{n + 1}$ is even.
	Thus, in this case, $\pi_1(F_{n + 1}) = 0$ does hold. However, for $k = 1$, the indicial operator $I_{n + 1}$ vanishes identically, so we still must add a log term as before. The above paragraph
	thus applies, but we note that $\mathcal{L}$ will be an even function and, while $v_{n + 1}$ is formally undetermined, we may impose the conformally invariant choice that it be even (which
	is equivalent to requiring that $v$ be smooth through this order). This choice does \emph{not} give uniqueness, however, as the even part of $v_{n + 2}$ is still entirely undetermined.

	It remains to prove the conformal invariance property of $\mathcal{L}$. Let $\hat{g} = e^{2\omega}g$, and suppose that we have
	\begin{equation*}
		\hat{v} = 1 + \hat{t}\hat{v}_1 + \ldots + \hat{t}^{n}\hat{v}_n + \hat{t}^{n + 1}(\log\hat{t})\widehat{\mathcal{L}} + \hat{t}^{n + 1}\hat{v}_{n + 1} + o(\hat{t}^{n + 1}).
	\end{equation*}
	By formal uniqueness and the conformal invariance of the equation, we must have $\hat{u} = e^{\omega}u + O(t^{n + 1})$. As shown in section \ref{distfuncsec},
	$\left.\frac{\hat{t}}{t}\right|_{t = 0}=\left.e^{\omega}\right|_{t = 0}$. It follows that we must have $t^{n + 1}(\log t)\mathcal{L} = \hat{t}^{n + 1}(\log \hat{t})\widehat{\mathcal{L}} +
	o(t^{n + 1}\log t)$, from which it follows that $\widehat{\mathcal{L}} = e^{-(n + 1)\omega}\mathcal{L}$, as desired.

	This concludes case \ref{noevenorodd}. The remaining cases are minor variations, which we will briefly describe. If $k \in O_n$, then there will be some odd integer $\nu < n + 1$ for which
	$I_{\nu} = \Delta_{\bo}$, whose kernel consists of functions constant on each fiber (i.e., restricting to the zeroth eigenfunctions). 
	However, because $\nu$ and $0$ have opposite parity, by \ref{hyprop}\ref{hypropsolve}, we may still
	uniquely solve the indicial equation subject to the smoothness condition $\pi_0(F_{\nu}) = 0$. This yields \ref{oddcase}.

	Now suppose that $k \in E_n$. The argument proceeds exactly the same until we reach $j = \nu$, where $\nu$ is the smallest positive order at which $I_{\nu}$ is not injective. That is,
	$I_{\nu} = \Delta_{\bo}$. This time, because $\nu$ is even, we cannot necessarily solve $I_{\nu} = -F_{\nu}$, because $F_{\nu} \in \mathcal{Y}_{\nu}(\St)$ and so $\pi_0(F_{\nu})$ may be not vanish.
	Assuming for now that $\nu \neq \frac{n}{2}$, we may proceed exactly as we did above for $\mathcal{L}$, except that we will add $t^{\nu}(\log t)A$ with $A = -\frac{1}{2\nu - n}\pi_0(F_{\nu})$. The conformal invariance
	argument proceeds the same.

	Now suppose $\nu = \frac{n}{2}$.
	Because the second term on the right-hand side of (\ref{log1eq}) vanishes, we may no longer fix the possible obstruction by adding a log. We must instead let
	$A = -\frac{1}{2}\pi_0(F_{\frac{n}{2}})$, and $V_{\frac{n}{2},-2} = V_{\frac{n}{2} - 1} + t^{\frac{n}{2}}(\log t)^2A$. The argument now proceeds in precisely the same way, including conformal invariance of $A$.
\end{proof}
We note that if $n$ is odd and we do \emph{not} impose the uniqueness-fixing condition $\pi_1(v_{n + 1}) = 0$, it is still the case that the space of possible coefficients of $t^{n + 1}$ is just
$v_{n + 1} + \mathcal{H}_1(\widetilde{\Sigma})$. This is important to our variational analysis.

In case $n - k + 2 > 0$, it follows  easily from polyhomogeneity that $v_0 = 1$, although a deeper argument of \cite{ln74} (see \cite{maz91sy}) shows it (still in the negative-curvature case) without the assumption
of polyhomogeneity. We summarize the application of Theorem \ref{usmooththm} to this case, via \cite{maz91sy}.
\begin{corollary}\label{negcol}
	Suppose $n - k + 2 > 0$. Then the unique solution $u$ to $L[u] \equiv 0$ with $u = 0$ on $\St$ may be written $u = tv$ with 
	\begin{equation*}
		v = \bar{v} + t^{n + \delta}v_{n + \delta} + t^{n + 1}(\log t)\mathcal{L} + t^{n + 1}v_{n + 1} + v_{phg},
	\end{equation*}
	where
	$\delta \in (0,1)$, $\bar{v} \in \beta^*C^{\infty}(M)$, $v_{n + \delta} \in C^{\infty}(\widetilde{\Sigma})$,
	and $v_{phg} = o(t^{n + 1})$ is polyhomogeneous. Also, $\mathcal{L} \in \mathcal{H}_1(\widetilde{\Sigma})$ if $k \neq 1$, and $\mathcal{L} \in \mathcal{H}_{n + 1}(\widetilde{\Sigma})$ if $k = 1$.
	Moreover, $\mathcal{L}$ is conformally invariant of weight $-(n + 1)$ and vanishes if $n$ is odd and $k > 1$. Also, if $n$ is odd,
	we may write $v_{n + 1} = \tilde{v}_{n + 1} + \mathcal{A}$, where $\tilde{v}_{n + 1} \in \mathcal{H}_1(\widetilde{\Sigma})$ is globally determined and conformally invariant of weight $-(n + 1)$ and $\mathcal{A} \in
	\mathcal{Y}_{n + 1}(\widetilde{\Sigma})$. Finally, $v_{n + \delta}$ is globally determined and conformally invariant of weight $-(n + \delta)$.
\end{corollary}
\begin{proof}
	The only claim that is not immediate is the conformal invariance of $\tilde{v}_{n + 1}$ in case $n$ is odd and $k > 1$. The possible existence of such a term in $\mathcal{H}_1(\widetilde{\Sigma})$ at this order
	follows from Theorem \ref{usmooththm} and the remark immediately following it. Recall that $u$ is unique and globally determined. Since $n$ is odd, $\mathcal{L} = 0$. Thus, if we write
	$\hat{g} = e^{2\omega}g$, then $\hat{u} = e^{\omega}u$; that is, $\hat{v} = e^{\omega}\left( \frac{t}{\hat{t}} \right)v$. Now, $e^{\omega}$ is smooth on $M$, and by Theorem \ref{tsmooththm} below,
	so is $\frac{t}{\hat{t}}$. It of course quickly follows as above that 
	$\hat{t}^{n + \delta}\hat{v}_{n + \delta} = t^{n + \delta}v_{n + \delta} + o(t^{n + 1})$. Now, due to the presence of $\tilde{v}_{n + 1}$, the function
	$v - t^{n + \delta}v_{n + \delta}$ is not smooth to order $o(t^{n + 1})$ (because $v_{n + 1}$ contains a harmonic, $\tilde{v}_{n + 1}$, of the wrong parity). On the other hand,
	$v - t^{n + \delta}v_{n + \delta} - t^{n + 1}\tilde{v}_{n + 1}$ is smooth to order $o(t^{n + 1})$. Thus, considering the order-by-order expansions of
	$\hat{v} = e^{\omega}\left( \frac{t}{\hat{t}} \right)v$, and noting again that $\left.\frac{t}{\hat{t}}\right|_{t = 0} = \left.e^{-\omega}\right|_{t = 0}$, we see that the only possible source of the term
	$\hat{t}^{n + 1}\hat{\tilde{v}}_{n + 1}$ is $t^{n + 1}\tilde{v}_{n + 1} = e^{(n + 1)\omega}\hat{t}^{n + 1}\tilde{v}_{n + 1} + o(t^{n + 1})$.

	If $k = 1$, then as discussed in the proof of Theorem \ref{usmooththm}, the parity of $\mathcal{L}$ will be opposite that of $n$.
\end{proof}
We note that, although $\mathcal{L}$ is conformally invariant of weight $-(n + 1)$, the one-form $\Lambda\mathcal{L}$ determined by it is invariant of weight $-n$, as may be seen from formula (\ref{oneformlemeq2}), since
the metric under conformal change contributes a factor of $e^{2\omega}$, while $Y$ contributes $e^{-\omega}$. The same applies for the one-form $u_{n + 2}$ defined by (\ref{uform}).

In Theorem \ref{usmooththm}\ref{noevenorodd}, in the case of $n$ odd, a uniqueness condition is stated which allows the formal expansion to be carried uniquely to one order higher. In fact, however, much more can be said. With this one
condition, the expansion is actually unique to infinite order, \emph{if} none of the higher indicial roots occurs at integer order. It is also true that if they occur at integer order, but the degree of the root and the degree of
the order have opposite parity, then additional such conditions may be imposed that yield uniqueness to infinite order. Although we will not need the result in this paper, it is convenient for completeness to record
the following facts here, since the proofs are exactly the same as for Theorem \ref{usmooththm}, and extensions to this work currently in progress will use these results.

In stating these results, the decisive condition is whether one can solve $s^2 - ns - (n - k + 2) = -\lambda_l = l(l + k - 2)$ for some integer $s$. If $k \notin E_n$, then we need only consider the case $s > n$, so we write
$s = n + \mu$. We therefore let
\begin{equation*}
	\begin{split}
		S_n = \left\{ p \in \mathbb{N}: (\mu - 1)(\mu + n + 1) = (l - 1)\right.(l +& p - 1) \text{ has solutions } \\
		&\left.(\mu,l) \in \mathbb{N}^2 \setminus \{(1,1)\} \right\}.
	\end{split}
\end{equation*}
Note that $E_n$ and $S_n$ are disjoint.
\begin{proposition}\label{smoothexprop}
	Let $n$ be odd and $k \notin E_n$.
	\begin{enumerate}[(a)]
		\item\label{easysmoothcase} Suppose $k \notin S_n$. There is a solution to $L[t\bar{v}] = O(t^{\infty})$ with $\bar{v} \in C^{\infty}(M)$ and $\bar{v}|_{t = 0} = 1$. Subject to the uniqueness condition given in
			Theorem \ref{usmooththm}\ref{noevenorodd}, such $\bar{v}$ is unique to infinite order.
		\item \label{solncase}Suppose $k \in S_n$. Let $\left\{ (\mu_j,l_j) \right\}$ be a lexicographic enumeration of
	\begin{equation*}
		P = \left\{ (\mu,l) \in \mathbb{N}^2\setminus\{(1,1)\}:(\mu - 1)(\mu + n + 1) = (l - 1)(l + k - 1)\right\}.
	\end{equation*}
	It is easy to see that for any $\mu$, there is at most one $l$ such that $(\mu,l) \in P$.
	\begin{enumerate}[(i)]
		\item \label{smoothsolncase}If $\mu_j \equiv l_j$ (mod $2$) for every $j$, then there is a solution to $L[t\bar{v}] = O(t^{\infty})$ with $\bar{v} \in C^{\infty}(M)$ and with
			$\bar{v}|_{t = 0} = 1$. For each $(\mu_j,l_j) \in P$, there is a conformally invariant smoothness condition, and $\bar{v}$ is unique mod $O(t^{\infty})$ because it satisfies
			each of these smoothness conditions.
		\item If $j$ is the first instance of $\mu_j \not\equiv l_j$ (mod 2), then there is a solution to the equation $L[u] = O(t^{n + \mu_j + 1})$ of the form $u = t\bar{v} + O(t^{n + \mu_j + 1}\log(t))$, with
			$\bar{v} \in C^{\infty}(M)$ and $\bar{v}|_{t = 0} = 1$. Such $\bar{v}$ is unique mod $O(t^{n + \mu_j})$.
	\end{enumerate}
	\end{enumerate}
\end{proposition}
The proof of the proposition is simply by continuing that of Theorem \ref{usmooththm}. We note that when $k = n + 2$, we are in the case of Proposition \ref{smoothexprop}\ref{solncase}\ref{smoothsolncase}, 
so a smooth formal solution exists, unique to infinite order.

\subsection{Smoothness of the conformal change-of-distance function}\label{distfuncsec}

In general, if $\hat{g}$ and $g$ are two Riemannian metrics with distance functions to $\Sigma$ given by $\hat{t}$ and $t$, then $\frac{\hat{t}}{t}$ may not be a smooth function on $M$. For example, the metric
$dx^2 + dxdy + dy^2$ on $\mathbb{R}^2$ leads to such a situation with respect to distance to the origin. However, if the two metrics are conformally related, then this situation does not arise.
\begin{theorem}\label{tsmooththm}
	Suppose $(M^{n + k},[g])$ is a conformal manifold, and $\Sigma^n$ is a closed embedded submanifold. Let $g, \hat{g} = e^{2\omega}g \in [g]$, and let $t, \hat{t}$ be the corresponding distance functions to $\widetilde{\Sigma}$.
	Then $\frac{\hat{t}}{t}$ extends to a smooth function on a neighborhood of $\Sigma$ in $M$, with $\left.\frac{\hat{t}}{t}\right|_{\Sigma} = e^{\omega}|_{\Sigma}$.
\end{theorem}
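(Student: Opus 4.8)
The plan is to reduce the theorem to the two criteria supplied by Proposition \ref{smoothprop}: that $\beta^*(\hat t/t)$ be smooth on $\Mt$ near $\St$, and that its $t$-Taylor coefficients lie in the spaces $\mathcal{Y}_j(\St)$. For the first criterion I would work with the \emph{squares}, since $t^2$ and $\hat t^2$ are honestly smooth on $M$ while $t,\hat t$ are not. Pulling back, $\beta^*(\hat t^2)$ is smooth on $\Mt$ and vanishes on the boundary $\St=\{t=0\}$. A normal-Hessian computation — the pure-normal covariant $\hat g$-Hessian of $\hat t^2$ along $\Sigma$ equals $2\hat g|_{N\Sigma}=2e^{2\omega}g|_{N\Sigma}$, which in the $g$-Fermi coordinates of (\ref{distform}) is $2e^{2\omega}\delta_{ab}$ — shows $\beta^*(\hat t^2)=e^{2\omega}t^2+O(t^3)$; in particular it vanishes to exactly second order. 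Two applications of Hadamard's lemma in the boundary-defining function $t$ then give $\beta^*(\hat t^2)=t^2F$ with $F\in C^\infty(\Mt)$ and $F|_{t=0}=e^{2\omega}|_\Sigma>0$. Taking positive square roots yields $\beta^*(\hat t/t)=\sqrt{F}$, smooth and positive on a neighborhood of $\St$.

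For the second, and substantive, criterion I would exploit the eikonal equation. Writing $\phi=\hat t/t$, the identities $|dt|_g^2=1$ and $\langle dt,d\phi\rangle_g=\partial_t\phi$ turn $|d\hat t|_g^2=e^{2\omega}$ into
\[
\mathcal{F}[\phi]:=\phi^2+2t\phi\,\partial_t\phi+t^2|d\phi|_g^2-e^{2\omega}=0.
\]
The key preliminary is an analogue of Lemma \ref{smoothlem}: if $\phi\in\beta^*C^\infty(M)$ then $\mathcal{F}[\phi]\in\beta^*C^\infty(M)$. This holds because $\phi^2$ and $t^2|d\phi|_g^2$ are manifestly smooth, while $2t\phi\,\partial_t\phi=\phi\langle d\phi,\grad_g(t^2)\rangle_g$ is smooth since $\grad_g(t^2)$ is a smooth vector field. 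I would then run an induction on the Taylor coefficients $\phi_m:=\tfrac1{m!}\partial_t^m(\beta^*\phi)|_{t=0}$ (which exist by Part~1). At order $t^0$ one gets $\phi_0^2=e^{2\omega}|_\Sigma$, so $\phi_0=e^\omega|_\Sigma\in\mathcal{H}_0(\St)\subset\mathcal{Y}_0(\St)$; in particular $\phi_0$ is fiber-constant. Assuming $\phi_i\in\mathcal{Y}_i(\St)$ for $i<m$, Proposition \ref{hyprop}\ref{hypropsmooth} gives that the partial sum $\Phi_{m-1}=\sum_{i<m}t^i\phi_i$ lies in $\beta^*C^\infty(M)$, whence $\mathcal{F}[\Phi_{m-1}]$ is smooth on $M$ and its order-$t^m$ coefficient lies in $\mathcal{Y}_m(\St)$ by Proposition \ref{hyprop}\ref{hypropdiff}. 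Comparing $\mathcal{F}[\phi]=0$ with $\mathcal{F}[\Phi_{m-1}]$, the leading difference linearizes to
\[
-\mathcal{F}[\Phi_{m-1}]=2(m+1)\phi_0\,\phi_m\,t^m+O(t^{m+1}),
\]
so $2(m+1)\phi_0\phi_m\in\mathcal{Y}_m(\St)$; since $\phi_0\in\mathcal{Y}_0(\St)$ is positive, $1/\phi_0\in\mathcal{H}_0(\St)$ and Proposition \ref{hyprop}\ref{hypropprod} yields $\phi_m\in\mathcal{Y}_m(\St)$, closing the induction.

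The main obstacle is verifying the displayed linearization, specifically that the coefficient of $\phi_m$ is the purely multiplicative factor $2(m+1)\phi_0\neq0$ with no surviving contribution from the second-order operator $t^2|d\phi|_g^2$. Here the danger is the spherical block of $\beta^*g^{-1}$, which by (\ref{gpolar}) is of size $t^{-2}$, so that $t^2|d\phi|_g^2$ contributes terms like $(\bo)^{\mu\nu}\partial_\mu(\cdot)\partial_\nu(\cdot)$ at order $t^0$; a sphere-derivative term could in principle wreck both the order count and the degree bookkeeping. The resolution is that every such term pairs a derivative of the perturbation against $\partial_\mu\phi_0$, which \emph{vanishes} because $\phi_0$ is fiber-constant, pushing these contributions to order $O(t^{m+1})$; the genuinely quadratic contributions are $O(t^{2m})$ and hence negligible for $m\geq1$. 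Because this linearization carries no Laplacian — contrasting with the indicial operator of Theorem \ref{usmooththm} — the recursion is never obstructed and introduces no logarithms, so $\phi_m$ is determined algebraically at every order. With both criteria of Proposition \ref{smoothprop} established, $\hat t/t$ descends to a smooth (and positive) function on a neighborhood of $\Sigma$ in $M$.
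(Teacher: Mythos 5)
Your proposal is essentially correct, and its engine is the same as the paper's: the eikonal identity $\mathcal{F}[\phi]=\phi^2+2t\phi\,\partial_t\phi+t^2|d\phi|_g^2-e^{2\omega}=0$, the observation that $\mathcal{F}$ preserves $\beta^*C^{\infty}(M)$ because $t\partial_t=\tfrac12\grad_g(t^2)$ is a smooth vector field, and an order-by-order induction in which the linearization is the purely multiplicative operator $\varphi\mapsto 2(m+1)\phi_0\varphi$ (the paper's $I_{f,s}$), so that no Laplacian, no obstruction, and no logarithms appear. Where you genuinely differ is in the two foundational inputs. For smoothness of $\beta^*(\hat t/t)$ on $\Mt$, the paper simply notes that $\hat t$ is the polar distance function of another Fermi coordinate system and so lifts to a smooth defining function of $\St$ (citing \cite{mel08}); your double application of Hadamard's lemma to the honestly smooth function $\hat t^2$, together with the normal Hessian computation $\nabla^2(\hat t^2)|_{\Sigma}=2\hat g|_{N\Sigma}$, is a more self-contained route to the same fact, and it buys you something extra: it identifies $F|_{t=0}=e^{2\omega}|_{\Sigma}$, hence $\phi_0=e^{\omega}|_{\Sigma}$, which is precisely the fiber-constancy on which the whole induction rests. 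The paper instead obtains fiber-constancy by a separate argument: from the order-zero eikonal equation $e^{2\omega_0}=\Psi_0^2+|d\Psi_0|^2_{\bo}$ it runs a maximum/minimum argument on each compact fiber.

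One slip must be flagged, because it sits exactly at the delicate point. Your sentence ``At order $t^0$ one gets $\phi_0^2=e^{2\omega}|_{\Sigma}$'' is false as a statement about the eikonal equation: precisely because the spherical block of $\beta^*g^{-1}$ is $t^{-2}b^{-1}$, the term $t^2|d\phi|_g^2$ contributes $|d\phi_0|^2_{\bo}$ at order $t^0$. Your own remark that the genuinely quadratic contributions are $O(t^{2m})$, hence negligible for $m\geq 1$, shows why $m=0$ is the one order at which they are \emph{not} negligible. The correct order-zero equation is $\phi_0^2+|d\phi_0|^2_{\bo}=e^{2\omega}|_{\Sigma}$, from which fiber-constancy of $\phi_0$ cannot be read off without an argument -- this is exactly what the paper's fiberwise max/min argument supplies. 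Your proof survives because Part 1 already gives $\phi_0=\sqrt{F}|_{t=0}=e^{\omega}|_{\Sigma}$; the base case of your induction should cite that Hessian computation rather than the eikonal equation. With that repair, the induction closes as you describe.
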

\begin{proof}
	Abusing notation, we also denote by $t$ and $\hat{t}$ the lifts of these functions to $\widetilde{M}$. They are both smooth defining functions of $\widetilde{\Sigma}$ in $\widetilde{M}$, since
	$\hat{t}$ is the polar distance function defined by another smooth set of Fermi coordinates on $M$ (see \cite{mel08}).
	We write $\hat{t} = \Psi t$ for some function $\Psi$ defined near $\Sigma$ (without loss of generality, defined on our tubular neighborhood). Then $\beta^*\Psi$ is a smooth function on $\widetilde{M}$ (near
	$\widetilde{\Sigma}$). 
	We write
	\begin{equation*}
		\wtp := \beta^*\Psi = \Psi_0 + t\Psi_1 + \cdots
	\end{equation*}
	Now, both $t$ and $\hat{t}$ satisfy the eikonal equation with respect to their associated metrics: $|dt|^2_g = 1 = |d\hat{t}|^2_{\hat{g}}$. Thus, away from $\Sigma$, we have
	\begin{align*}
		1 = |d\hat{t}|^2_{\hat{g}} &= e^{-2\omega}|d\hat{t}|^2_g\\
		&= e^{-2\omega}(\Psi^2|dt|^2_g + 2t\Psi \langle dt,d\Psi\rangle_g + t^2|d\Psi|^2_g)\\
		&= e^{-2\omega}(\Psi^2 + 2t\Psi \partial_t\Psi + t^2|d\Psi|^2_g).
	\end{align*}
	We now express this equation on the blowup in terms of the metric (\ref{gpolar}), writing $\tilde{\omega} = \beta^*\omega$:
	\begin{align*}
		1 &= e^{-2\tilde{\omega}}(\wtp^2 + 2t\widetilde{\Psi}\partial_t\wtp + t^2(\partial_t\wtp)^2 + |d\wtp|^2_{\bo}) + O(t^2).
	\end{align*}
	Writing $\tilde{\omega} = \omega_0 + t\omega_1 + \cdots$ and taking $t = 0$ gives
	\begin{equation}\label{eikonrestricteq}
		e^{2\tilde{\omega}_0} = \Psi_0^2 + |d\Psi_0|^2_{\bo}.
	\end{equation}
	Because $t$ and $\hat{t}$ are both defining functions for $\widetilde{\Sigma}$, the extension of $\widetilde{\Psi} = \frac{\hat{t}}{t}$ to $\widetilde{\Sigma}$ must be everywhere positive there. (If it had a zero,
	$d\hat{t}$ would vanish at that point.) Consider the restriction of $\Psi_0$ to a single fiber. It has a maximum and a minimum there by compactness, at each of which, $|d\Psi_0|_{\bo}$ vanishes.
	But since $e^{2\tilde{\omega}_0}$ is constant on the fiber (by smoothness of $\omega$ on $M$) and $\Psi_0 > 0$, we conclude by (\ref{eikonrestricteq}) that $\Psi_0$ is the same at both of these. Thus,
	$\Psi_0$ is a constant on each fiber; in particular, $\Psi_0 \equiv e^{\omega_0}$.

	We now study the eikonal operator in more detail. We define
	\begin{equation*}
		E[f] = f^2 + 2tf\partial_tf + t^2|df|^2_g - e^{2\omega}.
	\end{equation*}
	Plainly, since $t\partial_t$ is a smooth vector field on $M$, this operator preserves smoothness. We write $\widetilde{E}$ for the same operator on $\widetilde{M}$. We define an indicial operator
	associated to $E$, to compute how perturbations of $f$ at order $s \geq 1$ affect $E[f]$ at the same order, assuming $f_0 = f|_{\widetilde{\Sigma}}$ is positive and constant on fibers. Let
	$I_{f,s}:C^{\infty}(\widetilde{\Sigma}) \to C^{\infty}(\widetilde{\Sigma})$ be given by
	\begin{equation*}
		I_{f,s}[\varphi] := \left.t^{-s}\left( E[f + t^s\varphi] - E[f]  \right)\right|_{t = 0}.
	\end{equation*}
	We compute
	\begin{align*}
		E[f + t^s\varphi] &= f^2 + 2t^sf\varphi + t^{2s}\varphi^2 + 2t(f + t^2\varphi)(\partial_t f + st^{s - 1}\varphi)\\
		&\quad + t^2(\partial_tf + st^{s - 1}\varphi)^2 + |d(f + t^2\varphi)|^2_{\bo}.
	\end{align*}
	Because $s \geq 1$ and $|df|_{b} = O(t)$, we conclude
	\begin{equation*}
		I_{f,s}[\varphi] = 2(1 + s)f_0\varphi.
	\end{equation*}
	Since $\frac{1}{f_0} \in \mathcal{Y}_0(\widetilde{\Sigma})$, it follows that $I_{f,s}$ is an automorphism of $\mathcal{Y}_j(\widetilde{\Sigma})$ for all $s \geq 1$ and all $j$.

	We now let $\psi_j = \Psi_0 + t\Psi_1 + \cdots + t^j\Psi_j$. It is clear that $\psi_0 = \Psi_0$ is the pullback of a smooth function on $M$, since $\Psi_0$ depends only on the $\Sigma$ coordinates. 
	Thus, by the earlier observation that $E$ preserves smooth
	functions, $\widetilde{E}[\psi_0]$ is also the pullback of a smooth function on $M$. Let $F_1 = t^{-1}\widetilde{E}[\psi_0]|_{t = 0} \in C^{\infty}(\widetilde{\Sigma})$. Then
	$F_1 \in \mathcal{Y}_1(\widetilde{\Sigma})$ by smoothness and Proposition \ref{hyprop}\ref{hypropsmooth}. Since $\widetilde{E}[\hat{t}] = 0$, we must have
	\begin{equation*}
		O(t^2) = E[\Psi_0 + t\Psi_1] = t(F_1 + I_{\widetilde{\Psi}_0,1}[\Psi_1]) + O(t^2).
	\end{equation*}
	Thus, $I_{\wtp_0,1}[\Psi_1] = -F_1$, or $\Psi_1 = -e^{-\omega_0}F_1 \in \mathcal{Y_1}(\widetilde{\Sigma})$. It now follows from Proposition \ref{hyprop}\ref{hypropdiff} that $\psi_1 \in \beta^*C^{\infty}(M)$.
	We now proceed by induction, repeating this argument at each order, just as in the proof of Theorem \ref{usmooththm}.
\end{proof}

\subsection{Smoothness of determinant ratio}\label{detsmoothsec}
Finally, and more briefly, we show that on $\mathcal{U}$, the function $\sqrt{\frac{\det h \det \alpha}{\det h_0 \det \alpha_0}} = \sqrt{\frac{\det h \det \alpha}{\det h_0 \det \bo}}$ is a smooth function.
Observe that for any two metrics $g, G$, the ratio $\sqrt{\frac{\det g}{\det G}}$ is well-defined independent of the coordinate system in which $g, G$ are both expressed. 
In particular, if it defines a smooth function in one system of coordinates, then it defines one in another, even if the latter is not a smooth coordinate system. Now, we express the metric $g$ in (smooth) Fermi coordinates
as
\begin{align*}
	g &= h_{ij}(x^1,\cdots,x^n,y^{n + 1},\cdots,y^{n + k})dx^idx^j + 2\ell_{ai}dy^adx^i \\
	&\quad+ k_{ab}(x^1,\cdots,x^n,y^{n + 1},\cdots,y^{n + k})dy^ady^b,
\end{align*}
where this may be taken to define $k$ and $\ell$. We now define another smooth metric $G$ on $\mathcal{U}$ by
\begin{equation*}
	G = h_{ij}(x^1,\cdots,x^n,0,\cdots,0)dx^idx^j + k_{ab}(x^1,\cdots,x^n,0,\cdots,0)dy^ady^b.
\end{equation*}
Thus, $G$ is just the metric along $\Sigma$ ``translated'' along the flows of the normal Fermi coordinates. Since both are smooth, the function $\sqrt{\frac{\det g}{\det G}}$ is smooth. But, expressed in cylindrical coordinates, this is
just
\begin{equation*}
	\sqrt{\frac{\det g}{\det G}} = \sqrt{\frac{t^{2(k - 1)}\det h \det \alpha}{t^{2(k - 1)}\det h_0 \det \alpha_0 }} = \sqrt{\frac{\det h \det \alpha}{\det h_0 \det \bo}}.
\end{equation*}
The right-hand side is thus smooth.

\section{Renormalized volume}\label{volsec}

In this section, we analyze the volume expansion for a (formal) singular Yamabe metric. Throughout, we assume that either $n - k + 2 > 0$ and $u$ is the unique singular Yamabe function, which satisfies
(\ref{polyhom}); or that $n - k + 2 \leq 0$ with $k \notin E_n$, and $u$ is a formal singular Yamabe function, satisfying (\ref{formaleq}). We handle both cases simultaneously.

\begin{proof}[Proof of Theorem \ref{mainthm}] We wish to expand the expression $\int_{\{t > \varepsilon\}}dV_{g^+}$ in powers of $\varepsilon$. The outline of our approach will follow \cite{g99}. We begin by studying the volume form $dV_{g^+}$ near
$\Sigma$ -- that is, on $\mathcal{T}$. We may there write
\begin{equation*}
	dV_{\gp} = \vartheta dt dV_{h_0}dV_{\bo},
\end{equation*}
where $\vartheta = u^{-n - k}\sqrt{\frac{\det g}{\det h_0 \det \bo}}$. 

Since $u = t\bar{v} + o(t^{n + 1})$ with $\bar{v}$ smooth, by Corollary \ref{negcol} or Theorem \ref{usmooththm}; 
and $\sqrt{\frac{\det g}{\det h_0\det\bo}} = t^{k - 1}\sqrt{\frac{\det h\det b}{\det h_0\det \bo}} \in t^{k - 1}C^{\infty}(M)$, by Section \ref{detsmoothsec};
we conclude that $t^{n + 1}\vartheta$ is smooth to order $o(t^{n})$.
Thus, by Proposition \ref{hyprop}\ref{hypropdiff},
\begin{equation*}
	t^{n + 1}\vartheta = \vartheta_0 + \cdots + t^n\vartheta_n + o(t^{n}),
\end{equation*}
where $\vartheta_j \in \mathcal{Y}_j(\widetilde{\Sigma})$. Hence,
\begin{equation*}
	\vartheta = t^{-n - 1}\vartheta_0 + t^{-n}\vartheta_1 + \cdots + t^{-1}\vartheta_n + o(t^{-1}).
\end{equation*}
Once again following \cite{g99}, we choose some small $t_0 > 0$ and write $\vol_{g^+}(\left\{ t > \varepsilon \right\}) = C + \int_{\varepsilon < t < t_0}dV_{g^+}$. We obtain
\begin{equation*}
	\vol_{g^+}\left(\left\{ t > \varepsilon \right\}\right) = c_0\varepsilon^{-n} + c_1\varepsilon^{1 - n} + \cdots + c_{n - 1}\varepsilon^{-1} + \mathcal{E}_{n,k}\log\left( \frac{1}{\varepsilon} \right)
	+ V_{n,k} + o(1).
\end{equation*}
Here,
\begin{align*}
	c_j &= \frac{1}{j - n}\int_{\widetilde{\Sigma}}\vartheta_j dV_{\bo}dV_{h_0}, \quad 0 \leq j \leq n - 1,\\
	\intertext{and}
	\mathcal{E}_{n,k} &= \int_{\widetilde{\Sigma}}\vartheta_ndV_{\bo}dV_{h_0}.
\end{align*}
Recall that $\pi_i:\mathcal{Y}_j(\widetilde{\Sigma}) \to \mathcal{H}_i(\widetilde{\Sigma})$ is projection on the $i$th summand. Since
\begin{equation*}
	\vartheta_j = \pi_0(\vartheta_j) + \pi_1(\vartheta_j) + \cdots + \pi_n(\vartheta_j)
\end{equation*}
and harmonics other than $0$th-degree integrate to zero,
\begin{align}
	c_j &= \frac{\vol_{\bo}(\spheres^{k - 1})}{j - n}\int_{\Sigma}\pi_0(\vartheta_j)dV_{h_0}\nonumber\\
	\mathcal{E}_{n,k} &= \vol_{\bo}(\spheres^{k - 1})\int_{\Sigma}\pi_0(\vartheta_n)dV_{h_0}.\label{eeq}
\end{align}
But for $j$ odd, $\pi_0(\vartheta_j) = 0$, by definition of $\mathcal{Y}_j$. Thus, $c_j = 0$ for odd $j$ and $\mathcal{E}_{n,k} = 0$ for odd $n$.

We now turn to conformal invariance.
	Let $\hat{g} = e^{2\omega}g$. Let $\hat{t}$ be the $\hat{g}$-distance function to $\Sigma$. Throughout the remainer of the proof, we use the identification (\ref{fermpolident}) induced by $g$ (and thus $t$). Thus,
	$\hat{t}$ is a function of $(x,\omega,t)$, for example. In particular, by Theorem \ref{tsmooththm}, we may write $t = \hat{t}\Psi$, where $\Psi$ is a smooth function on $M$; we may write, in particular,
	$t = \hat{t}\Psi(x,\omega,\hat{t})$.
	For fixed $(x,\omega) \in \widetilde{\Sigma}$, set $\hat{\varepsilon} = \varepsilon\Psi(x,\omega,\varepsilon)$. Then $\hat{t} > \varepsilon$ if and only if $t > \hat{\varepsilon}$. We are now in a position
	to compare the volume expansions:
	\begin{align}
		\vol_{g^+}&\left( \left\{ t> \varepsilon \right\} \right) - \vol_{g^+}\left( \hat{t} > \varepsilon \right) = \vol_{g^+}\left( \left\{ t > \varepsilon \right\} \right) - 
		\vol_{g^+}\left( \left\{ t > \hat{\varepsilon} \right\} \right)\notag\\
		&= \int_{\widetilde{\Sigma}}\int_{\varepsilon}^{\hat{\varepsilon}}\vartheta dtdV_{h_0}dV_{\bo}\notag\\
		&= \int_{\widetilde{\Sigma}}\left( \frac{\vartheta_0}{n}(\varepsilon^{-n} - \hat{\varepsilon}^{-n}) + \cdots + \frac{\vartheta_{n - 1}}{1}(\varepsilon^{-1} - \hat{\varepsilon}^{-1})\right.\notag\\
		&\qquad\qquad+\left.\vartheta_n\log\left( \frac{\hat{\varepsilon}}{\varepsilon} \right)\right)dV_{h_0}dV_{\bo} + o(1)\notag\\
		&= \sum_{j = 0}^{n - 1}\frac{\varepsilon^{j - n}}{j - n}\int_{\widetilde{\Sigma}}\vartheta_j(1 - \Psi^{j-n})dV_{\bo}dV_{h_0} + \int_{\widetilde{\Sigma}}\vartheta_n\log\Psi dV_{\bo}dV_{h_0} + o(1).\label{volexpandeq}
	\end{align}
	Now, $\Psi$ is smooth, and $\Psi|_{t = 0} = e^{-\omega}|_{t = 0}$. Let $\omega_0 = \omega|_{t = 0}$. Thus, by Proposition \ref{hyprop}\ref{hypropdiff},
	\begin{align*}
		\Psi^{-j} &= e^{j\omega_0} + tF_{j,1} + t^2F_{j,2} + \cdots + t^{n}F_{j,n} + O(t^{n + 1}) \text{ and}\\
		\log\Psi &= -\omega_0 + t\ell_1 + \cdots + t^n\ell_n + O(t^{n + 1}),
	\end{align*}
	for some $F_{j,m},\ell_m \in \mathcal{Y}_m(\widetilde{\Sigma})$.

	It is clear that the right-hand side of (\ref{volexpandeq}) has no $\log\left( \frac{1}{\varepsilon} \right)$ term. Thus, $\widehat{\mathcal{E}}_{n,k} = \mathcal{E}_{n,k}$, which establishes the claim in case $n$ is even.
	Also, the $\varepsilon$-independent term in (\ref{volexpandeq}) is
	\begin{equation}\label{vdiffeq}
		V_{n,k} - \widehat{V}_{n,k} = \sum_{j = 0}^{n - 1}\frac{1}{j - n}\int_{\widetilde{\Sigma}}\vartheta_jF_{n - j,n - j}dV_{\bo}dV_{h_0} - \int_{\widetilde{\Sigma}}\vartheta_n\omega_0dV_{\bo}dV_{h_0}.
	\end{equation}
	Now, by Proposition \ref{hyprop}\ref{hypropprod}, $\vartheta_jF_{n - j,n-j} \in \mathcal{Y}_n(\widetilde{\Sigma})$. Similarly, we have $\omega_0 \in \mathcal{Y}_0(\widetilde{\Sigma})$, so $\omega_0\vartheta_n \in \mathcal{Y}_n
	(\widetilde{\Sigma})$. When $n$ is odd, the integral of each term therefore vanishes fiberwise. Consequently, in this case, $V_{n,k} = \widehat{V}_{n,k}$.

	Finally, the claim that $\mathcal{E}_{n,k}$ is independent of which singular Yamabe metric $g^+$ is chosen is due to (\ref{eeq}), since $\vartheta_n$ is determined by the formal asymptotics of $u$.
\end{proof}

We remark that the above proof would work if $\vartheta$ had arbitrary irrational powers of $t$ in its expansion. The terms these would contribute in (\ref{volexpandeq}) would not contribute
to (\ref{vdiffeq}). The theorem thus also applies to polyhomogeneous solutions of the singular Yamabe problem in any non-exceptional codimension.

In case $n$ is even, (\ref{vdiffeq}) gives the so-called conformal anomaly in $V_{n,k}$. The linear part, in particular, is simply $\vartheta_n$. Compare \cite{g99}.

\section{Smoothness under variation}\label{varsmoothsec}

Throughout this section and the next, we now consider a variation
$\mathcal{F}:(-\delta,\delta)_s \times \Sigma \to M$ of $\Sigma$ in $M$
satisfying $\mathcal{F}(0,\cdot) = \id_{\Sigma}$. We let
$X = \frac{\partial}{\partial s}\mathcal{F}(s,\cdot)|_{s = 0} \in \Gamma(\Sigma,TM)$ be the variation
field, and we assume for convenience that $X \perp T\Sigma$, so that $X \in \Gamma(\Sigma,N\Sigma)$. This involves no loss of generality,
since tangential components of the variation would not alter the volume in any event. See \cite{gmt22} for a more detailed development of this
argument. We let $\Sigma_s = \mathcal{F}(s,\Sigma)$ be the image of the variation at time $s$.

Before we compute the derivative of $\mathcal{E}$ or $V$, it is necessary to understand how $t$ and $u$ vary with $s$.

\subsection{Distance}\label{smoothdistdersec} 
Let $t_s: M \to [0,\infty)$ be the distance to $\Sigma_s$ with respect to $g$. We will show that, for $p$ near enough (but not on) $\Sigma$, $t_s$ is smooth in $s$ for $s$ very small, and will
also compute the derivative.

Because $M$ is compact, there is a positive lower bound to the injectivity radius, and we assume (by restricting if necessary) that our tubular neighborhood $\mathcal{T}$ is so small that its radius is less than this
lower bound.
\begin{theorem}
	\label{tvarthm}
	Let $||R|| = \sup_{x \in \Sigma}|\riem(x)|_g$ and $||\mathfrak{L}|| = \sup_{x \in \Sigma}|\mathfrak{L}(x)|_g$ be the largest norms (on $\Sigma$) of the ambient curvature tensor and the second fundamental form, respectively.
	Suppose that $p \in \mathcal{T}$ and that $0 < t(p) < \frac{1}{2((n + k)||R|| + ||\mathfrak{L}||)}$. 
	Then there exists $\delta_p \in (0,\delta)$ so that $t_s(p)$ is a smooth function in $s$ on $(0,\delta_p)$.

	Moreover, if $q \in \Sigma$ is the closest point to $p$ and $Y \in N_q\Sigma$ with $|Y|_g = 1$ and $p = \exp_q(t_s(p)Y)$, then
	\begin{equation}\label{tderiveq}
		\left.\frac{\partial t_s(p)}{\partial s}\right|_{s = 0} = -\langle Y,X_q\rangle.
	\end{equation}
\end{theorem}
It is interesting to note that the right-hand side of (\ref{tderiveq}) is independent of $t(p)$, just as in the Euclidean case.

\begin{proof}
	We fix $p$ satisfying the hypothesis, and let $\rho:M \to [0,\infty)$ be the $g$-distance to $p$. 
	Let $q$ be the (unique) closest point on $\Sigma$ to $p$. We claim the Hessian of $\rho$ at $q$ with respect to the induced metric on $\Sigma$ is strictly positive definite.
	To see this, let $Z \in T_q\Sigma$ with $|Z|_g = 1$, and let $\beta:\mathbb{R} \to \Sigma$ be a smooth path with $\beta(0) = q$ and $\beta'(0) = Z$. We wish to show that 
	\begin{equation}
		\label{secderivneg}
		\frac{d^2}{d\tau^2}(\rho(\beta(\tau))) > 0.
	\end{equation}
	Let $\gamma:[0,1] \to M$ be the minimizing geodesic from $p$ to $q$ of speed $\rho(q)$, and let $\Gamma:\mathbb{R} \times [0,1] \to M$ be defined by letting
	$\Gamma(\tau,\cdot)$ be the minimizing geodesic from $p$ to $\beta(\tau)$ with speed $\rho(\beta(\tau))$. Then (\ref{secderivneg}) is equivalent to the positivity of the $\tau$-derivative of the
	length of the curves $\gamma_{\tau}: \lambda \mapsto \Gamma(\tau,\lambda)$. Let $V(\lambda) = \partial_{\tau}\Gamma(\tau,\lambda)|_{\tau = 0}$, which is a Jacobi field on $\gamma$.
	By a standard formula (see e.g. Problem 10-9 of \cite{leeIRM} or \cite{amb61}),
	\begin{equation*}
		\left.\frac{d^2}{d\tau^2}(\rho(\beta(\tau)))\right|_{\tau = 0} = \left.\frac{d^2}{d\tau^2}L(\gamma_{\tau})\right|_{\tau = 0} = \left.\langle D_{\lambda}V,V\rangle_g\right|_{t = 1} +
				\langle\mathfrak{L}(Z,Z),\gamma'(1)\rangle_g.
	\end{equation*}
	Now, the second term is bounded by $||\mathfrak{L}||\rho(q) = ||\mathfrak{L}||t(p)$. We wish to analyze the first. Notice that $V(1) = Z$. Meanwhile, since a Jacobi field is determined by its value at two points,
	$D_\lambda V(1)$ will be the same for any Jacobi field along $\gamma$ that vanishes at $p$ and is $Z$ at $q$.

	Consider now the metric $\tilde{g} = \exp_p^*g$ on $T_pM$. Suppose $v \in T_pM$ is the unique smallest vector such that $\exp_p(v) = q$, and also that $w \in T_v(T_pM) \cong T_pM$ is the unique vector such that
	$(d\exp_p)_v(w) = Z$. Define $\widetilde{\Gamma}:\mathbb{R} \times \mathbb{R} \to T_pM$ by $\widetilde{\Gamma}(\tau,\lambda) = \lambda(v + \tau w)$. 
	Then $\exp_p\circ\Gamma$ is another variation through geodesics about $\gamma$, with the same
	Jacobi field $V$. Since $\exp_p$ is an isometry from a neighborhood of $0 \in T_pM$ to a neighborhood including $q$ in $M$, we may thus compute $\langle D_{\lambda}V,V\rangle$ on $T_pM$ using the radial
	$\tilde{g}$-geodesic variation $\widetilde{\Gamma}$. The Jacobi field may then be written $\widetilde{V}(\lambda) = \lambda w$. Thus,
	$\widetilde{V}'(1) = w + D_{\partial_{\lambda}}^{\tilde{g}}w = w + w^{A}\Gamma_{A\lambda}^B\partial_B$ (where, momentarily, we let $A,B$ denote indices on a set of radial coordinates of $T_pM$).
	Now, by the standard Taylor expansion of normal coordinates at a point, the mean value theorem, and the fact that $|\partial_{\lambda}|_{\tilde{g}} \equiv t(p)$ along $\exp_p^*\gamma$ by Gauss's Lemma,
	we have $|\Gamma_{A\lambda}^B| \leq ||R||t(p)$. Thus,
	\begin{equation*}
		\left.\langle D_{\lambda}V,V\rangle_g\right|_{t = 1} = \langle w, w - D_{\partial_{\lambda}}^{\tilde{g}}w\rangle \geq 1 - (n + k)||R||t(p).
	\end{equation*}
	It now follows from our hypotheses that $\left.\frac{d^2}{d\tau^2}(\rho(\beta(\tau)))\right|_{\tau = 0} > \frac{1}{3}$. We note that the positive definiteness of this Hessian is the only place in the proof where
	the hypothesis on $\rho(p)$ is used.

	For $s \in (-\delta,\delta)$, define $\mathcal{F}_s:\Sigma \to M$ by $\mathcal{F}_s(x) = \mathcal{F}(s,x)$.
	We now choose a trivialization of $T^*\Sigma$ over a neighborhood $U$ of $q$, say $\pi:T^*U \to \mathbb{R}^n$. Define $f:(-\delta,\delta) \times U \to \mathbb{R}^n$ by $f(s,x) = \pi(\mathcal{F}_s^*d\rho)$.
	We claim that $df_x|_{(0,q)}:T_q\Sigma \to T_{f(0,q)}\mathbb{R}^n$ is an isomorphism; indeed, it is easy to verify that this is equivalent to the invertibility of the Hessian of $\rho$, which we have just shown is
	positive definite.

	It now follows from the implicit function theorem that the equation $f(s,\sigma(s)) = 0$ defines a smooth path $\sigma:(-\delta',\delta') \to \Sigma$ satisfying $\sigma(0) = q$, 
	for some $\delta' \in (0,\delta]$. But this equation simply states that at
	$\mathcal{F}(s,\sigma(s))$, $\grad \rho$ will be orthogonal to $T\Sigma_s$. Thus, by Lagrange multipliers, $\mathcal{F}(s,\sigma(s))$ is the closest point on $\Sigma_s$ to $p$. But
	\begin{equation}
		\label{tseq}
		t_s(p) = \rho(\mathcal{F}(s,\sigma(s))).
	\end{equation}
	Thus, $t_s(p)$ is differentiable in $s$ for sufficiently small $s$.

	We may use (\ref{tseq}) to compute the $s$-derivative. Recalling that $\mathcal{F}(0,\cdot) = \id_{\Sigma}$, we have
	\begin{align*}
		\left.\frac{\partial t_s}{\partial s}\right|_{s = 0} &= \left.d\rho(X_q + \sigma'(s))\right|_{s = 0}\\
			&= \langle \grad_g\rho|_q,X_q\rangle + \langle
			\grad_g\rho|_q,\sigma'(0)\rangle.
	\end{align*}
	The second term is zero, by (again) Lagrange multipliers.
	Also, $\grad_g\rho = -Y$, since the former is the direction of
	fastest-increasing growth in $\rho$. Thus, (\ref{tderiveq}) follows.
\end{proof}

\subsection{Singular Yamabe functions}\label{smoothussec} In order to compute variations, we will also require that $u$ itself, the singular Yamabe solution, be differentiable with respect to $s$, and that its derivative be polyhomogeneous.
The proof that this is so is highly involved; fortunately, it is already contained in the wide-ranging paper \cite{ms91}. Because the authors there were concerned with a slightly different question, and set the matter up differently,
it is not immediately transparent that this claim is proved in their paper. We therefore rehearse their setup from our context. The following is simply a recounting of what was done in that paper.

Let $u_s$ be the solution to the singular Yamabe problem for $(M,\Sigma_s)$; thus, $u_s$ is a defining function for $\Sigma_s$, and $g_s^+ = u_s^{-2}g$ has constant scalar curvature
$-(n - k + 2)(n + k - 1)$. For each $s$, we extend the embedding $\mathcal{F}(s,\cdot)$ to a diffeomorphism $\tau_s = \tau(s):M \to M$, depending smoothly on $s$. In particular, we require $\tau_0 = \id$.

Now, let $\widehat{U}_s = u_s^{-(n+ k - 2)/2}$. Thus, $\widehat{U}_s$ satisfies the PDE
\begin{multline}\label{Uhateq}
	\Delta_g\widehat{U}_s - \frac{n + k - 2}{4(n + k - 1)}R_g\widehat{U}_s + \frac{(k - 2)^2 - n^2}{4}\widehat{U}_s^{1 + \frac{4}{n + k - 2}} = 0\\
	\widehat{U}_s \text{ tends to infinity near } \Sigma_s = \tau_s(\Sigma)\\
	\widehat{U}_s^{\frac{4}{n + k - 2}}g \text{ is complete on } M \setminus \Sigma_s.\\
\end{multline}
Like the equation
\begin{equation}
	L[u_s] = 0, \qquad u_s|_{\Sigma_s} = 0,
\end{equation}
to which it is equivalent, (\ref{Uhateq}) has the inconvenient feature that the singularity of its solution moves with $s$. We may transfer the equation to $M \setminus \Sigma$ by setting $U_s = \tau_s^*\widehat{U}_s$.
By the naturality of the conformal Laplacian $\Box_g = \Delta_g - \frac{n + k - 2}{4(n + k - 1)}R_g$, we have
\begin{equation}
	\label{Ueq}
	\Delta_{\tau_s^*g}U_s - \frac{n + k - 2}{4(n + k - 1)}R_{\tau_s^*g}U_s + \frac{(k - 2)^2 - n^2}{4}U_s^{1 + \frac{4}{n + k - 2}} = 0.
\end{equation}
More generally, we might write $U_{\tau}$ for the solution of (\ref{Ueq}) for any diffeomorphism $\tau$. Write $U = U_0(1 + W)$ and $g^+(\tau) = U_0^{4/(n + k - 2)}g$. Then by the conformal change formula
for the conformal Laplacian, we have
\begin{equation}
	\begin{split}
		0 =& \Delta_{g^+(\tau)}(1 + W) - \frac{n + k - 2}{4(n + k - 1)}R_{g^+(\tau)}(1 + W)\\
		&\qquad\qquad+ \frac{(k - 2)^2 - n^2}{4}(1 + W)^{1 + \frac{4}{n + k - 2}} =: H_{\tau}[W].
	\end{split}
\end{equation}
The goal is to use the Implicit Function Theorem on Banach spaces to realize $u_s$ as a smooth path by writing $U_s = U_0(1 + W_s)$ for a smooth path $W_s$; and to this end, we consider solutions $(\tau,W)$ to
$H_{\tau}[W] = 0$ that are near $(\id,0)$.

First, for a given $\tau$, the derivative (in $W$) of $H_{\tau}[W]$ is
\begin{equation*}
	\begin{split}
		E(\tau,W)[\phi] =& \left( \Delta_{g^+(\tau)} - \frac{n + k - 2}{4(n + k - 1)}R_{g^+(\tau)}\right.\\
		&\quad+ \left.\frac{((k - 2)^2 - n^2)(n + k + 2)}{4(n + k - 2)}(1 + W)^{\frac{4}{n + k - 1}} \right)\phi.
	\end{split}
\end{equation*}
Since $R_{g^+(\id)} = R_{g^+} = -(n + k - 1)(n - k + 2)$, we have in particular that
\begin{equation}
	\label{E0}
	E(\id,0)[\phi] = (\Delta_{g^+} + (n - k + 2))\phi.
\end{equation}
Much of section 4 of (\cite{ms91}) is taken with showing that, in contexts including ours here, this map is an isomorphism between appropriate Banach spaces. (It is relevant at this point that we are in
the case $n - k + 2 > 0$.) The authors define Banach spaces of diffeomorphisms and of smooth functions, and use the implicit function theorem (for Banach spaces) applied to the equation
$H_{\tau(s)}[G(\tau(s))] = 0$ to show that $W_s = G(\tau(s))$ depends smoothly on $\tau$, and thus on $s$. This argument is contained in section 2 of that paper, culminating in Theorem 2.21. Note that, although that theorem
is stated and proved for $\spheres^n \subset \spheres^{n + k}$, the arguments leading up to it are explicitly general, and the proof of the corollary itself is also (implicitly) general. 
Also, the kernel $\mathcal{J}(\alpha,\nu)$ mentioned
in their theorem may be taken to be trivial in the negative-curvature case we study here (i.e., because $n - k + 2 > 0$).

Thus, for each $p$, there is some small interval (in $s$) for which $W_s(p)$ is smooth in $s$; and in particular, $\frac{\partial W_s}{\partial s}|_{s = 0}$ exists globally and is in $C^{2,\alpha}(M)$ (as this is the
Banach space in which the derivative takes its value in the argument of \cite{ms91}). It immediately follows that, likewise, $\frac{\partial u_s}{\partial s}|_{s = 0}$ likewise exists globally and is $C^{2,\alpha}$.
We may thus differentiate the equation $L[u_s] = 0$ with respect to $s$, and set $s = 0$, obtaining a linear PDE which this derivative satisfies. But the equation in question is linear of edge type, and so
the usual theory of differential edge operators (\cite{maz91,maz91sy}) may be applied to show that, in fact, $\frac{\partial u_s}{\partial s}|_{s = 0}$ is polyhomogeneous.

\section{Variation Formulae}\label{varsec}
In this section, we prove Theorem \ref{varthm}. The argument draws heavily on that in \cite{g17}.
We let $u_s$ be the solution at time $s$ to the singular Yamabe problem on $(M,\Sigma_s)$, and write $g_s^+ = u_s^{-2}g$. Since $u_0 = u$, of course $g_0^+ = g^+$.
For any function or time-dependent constant $f$, we write $\dot{f} = \partial_sf|_{s = 0}$. We let $X$ and $\mathcal{F}$ be as in section \ref{varsmoothsec}.

\begin{proof}[Proof of Theorem \ref{varthm}]
	Suppose first that $k > 1$. Let $t_s$ be the $g$-distance to $\Sigma_s$. We have
	\begin{equation}
		\label{usexpeq}
		\begin{split}
			u_s =&\ t_s + t_s^2(u_s)_2 + \cdots + t_s^{n + 1}(u_s)_{n + 1} + t_s^{n + 1 + \delta}(u_s)_{n + 1 + \delta}\\
			&\qquad+ t_s^{n + 2}(\log t_s)\mathcal{L}_s + t_s^{n + 2}(u_s)_{n + 2} + o(t_s^{n + 2}).
		\end{split}
	\end{equation}
	Here, as mentioned in section \ref{smoothsec}, $\mathcal{L}_s = 0$ if $n$ is odd and $k > 1$.
	Now, it is clear from (\ref{volexpintroeq}) that
	\begin{equation*}
		\left( \vol_{g_s^+}\left(\left\{ t_s > \varepsilon \right\}\right) \right)^{\sbt} = \dot{c}_0\varepsilon^{-n} + \cdots + \dot{c}_{n - 1}\varepsilon^{-1} + \dot{\mathcal{E}}_{n,k}\log\left( \frac{1}{\varepsilon} \right)
		+ \dot{V}_{n,k} + o(1).
	\end{equation*}
	That is, $\dot{\mathcal{E}}_{n,k}$ and $\dot{V}_{n,k}$ are respectively the logarithmic and constant terms, in $\varepsilon$, in the derivative of the volume expansion.
	Now,
	\begin{equation*}
		\vol_{g_s^+}\left(\left\{ t_s > \varepsilon \right\}\right) = \int_{\left\{ t_s > \varepsilon \right\}} dV_{g_s^+} = \int_{\left\{ t_s > \varepsilon \right\}}u_s^{-n - k}dV_g.
	\end{equation*}
	Also, $u_s = u + s\dot{u} + o(s)$ by the discussion in section \ref{smoothussec}. Define $\omega_s = -\log \frac{u_s}{u}$. It follows that
	\begin{align*}
		\vol_{g_s^+}(\left\{ t_s > \varepsilon \right\}) &= \int_{\left\{ t_s > \varepsilon \right\}}\left( 1 - (n + k)s\frac{\dot{u}}{u} \right)u^{-n - k}dV_{g} + o(s)\\
		&= \int_{ \left\{ t_s > \varepsilon \right\}}(1 + (n + k)s\dot{\omega})dV_{g^+} + o(s).
	\end{align*}
	We may, of course, regard $t_s$ as a function $t_s = t_s(s,t,x)$, where $x \in \widetilde{\Sigma}$.
	It follows from the implicit function theorem that we may define a function $\varepsilon_s:(-\delta,\delta)_s \times \widetilde{\Sigma}\to \mathbb{R}$ so that $t_s(s,\varepsilon_s(s,x),x) = \varepsilon$; and then
	$t_s > \varepsilon$ is equivalent to $t > \varepsilon_s$.
	Thus,
	\begin{equation*}
		\vol_{g_s^+}\left(\left\{ t_s > \varepsilon \right\}\right) = \int_{\left\{ t > \varepsilon_s \right\}}(1 + (n+k)s\dot{\omega})dV_{g^+} + o(s).
	\end{equation*}
	Consequently,
	\begin{equation}\label{doteq}
		\left( \vol_{g_s^+}\left(\left\{ t_s > \varepsilon \right\}\right) \right)^{\sbt} = \int_{\left\{ t > \varepsilon \right\}}(n + k)\dot{\omega}dV_{g^+} - \int_{\widetilde{\Sigma}}(\dot{\varepsilon}_s\vartheta|_{t = \varepsilon})
		dV_{\widetilde{\Sigma}},
	\end{equation}
	where we have used the Leibniz integral rule.

	Observe that $u_s = e^{-\omega_s}u$, so that $g_s^+ = e^{2\omega_s}g^+$. By differentiating the conformal change formula for scalar curvature
	\begin{equation*}
		R_{g_s^+} = e^{-2\omega_s}\left( R_{g^+} - 2(n + k - 1)\Delta_{g^+}\omega_s - (n + k - 1)(n + k - 2)|d\omega_s|_{g_s^+}^2 \right)
	\end{equation*}
	and setting $s = 0$, we have
	\begin{equation*}
		\dot{R}_{g_s^+} = -2(R_{g^+}\dot{\omega} + (n + k - 1)\Delta_{g^+}\omega).
	\end{equation*}
	Now, $R_{g_s^+} \equiv -(n  + k - 1)(n - k + 2)$, so we conclude that
	\begin{equation*}
		\dot{\omega} = \frac{1}{n - k + 2}\Delta_{g^+}\dot{\omega}.
	\end{equation*}
	The first term in (\ref{doteq}) is
	\begin{align*}
		\int_{\left\{ t > \varepsilon \right\}}(n + k)\dot{\omega}dV_{g^+} &= \frac{n + k}{n - k + 2}\int_{ \left\{ t > \varepsilon \right\}}\Delta_{g^+}\dot{\omega}dV_{g^+}\\
		&= \frac{n + k}{n - k + 2}\int_{ \left\{ t = \varepsilon \right\}}-u \partial_t(\dot{\omega})dA_{g^+},
	\end{align*}
	where $dA_{g^+} = u\vartheta|_{t = \varepsilon}dV_{h_0}dV_{\bo}$ and $u\partial_t$ is the inward unit normal to the set $\left\{ t > \varepsilon \right\}$.

	Since $\partial_t\dot{\omega} = u^{-2}(\dot{u}\partial_tu - u\partial_t\dot{u})$, we have
	\begin{equation}
		\label{middleq1}
		\int_{{t > \varepsilon}}(n + k)\dot{\omega}dV_{g^+} = -\frac{n + k}{n - k + 2}\int_{t = \varepsilon}(\dot{u}\partial_tu - u\partial_t\dot{u})\vartheta|_{t = \varepsilon}dV_{h_0}dV_{\bo}.
	\end{equation}
	Now, observe that we may write
	\begin{equation}\label{allexpeq}
		\begin{split}
			\vartheta &= t^{- n - 1}\left( 1 + P - (n + k) t^{n + 1}(\log t)\mathcal{L} - t^{n + 1}((n + k)u_{n + 2} + A) \right) + o(1)\\
			\dot{u} &= \dot{t} + F + (n + 2)t^{n + 1}(\log t)(\dot t \mathcal{L}) + t^{n + 1}(\dot{t}\mathcal{L} + (n + 2)\dot{t}u_{n + 2}) + o(t^{n + 1})\\
		\partial_tu &= 1 + G + (n + 2)t^{n + 1}(\log t)\mathcal{L} + t^{n + 1}(\mathcal{L} + (n + 2)u_{n + 2}) + o(t^{n + 1})\\
		\partial_t\dot{u} &= 2\dot{t}u_2 + H + (n + 1)(n + 2)t^n(\log t)(\dot{t}\mathcal{L}) + t^n((2n + 3)\dot{t}\mathcal{L}\\
		&\qquad\qquad\qquad\qquad\qquad\qquad\qquad+ (n + 1)(n + 2)\dot{t}u_{n + 2}) + o(t^n),
		\end{split}
	\end{equation}
	where $F,G,H,P$ are polynomials in $t$ with coefficients in $C^{\infty}(\widetilde{\Sigma})$ and are $O(t)$, with $F, G$, and $P$ of degree $n$ and $H$  of degree $n - 1$. Moreover,
	since $\dot{t}|_{t = \varepsilon} \in \mathcal{H}_1(\widetilde{\Sigma})$ by (\ref{tderiveq}), $F,G, H$, and $P$ have $t^j$-coefficients in
	$\mathcal{Y}_{j + 1}(\widetilde{\Sigma}),\mathcal{Y}_j(\widetilde{\Sigma}),\mathcal{Y}_{j + 2}(\widetilde{\Sigma})$, and $\mathcal{Y}_j(\widetilde{\Sigma})$, respectively.
	Also, $A \in \mathcal{Y}_{n + 1}(\widetilde{\Sigma})$.

	Suppose $n$ is even and that $n - k + 2 \neq 0$. From (\ref{usexpeq}) and (\ref{allexpeq}), it is straightforward to compute that the $\log\left( \frac{1}{\varepsilon} \right)$-coefficient in (\ref{middleq1}) is
	$\frac{(n + k)(n^2 + 2n + k - 2)}{n - k + 2}\dot{t}\mathcal{L}$, while the same coefficient in $\dot{\varepsilon}_s\vartheta|_{t = \varepsilon}$ is $(n + k)\dot{\varepsilon}_s\mathcal{L}$.

	Note that $\dot{\varepsilon}_s = -\dot{t}|_{t = \varepsilon}$. We may thus conclude from (\ref{doteq}) that
	\begin{equation}
		\dot{\mathcal{E}}_{n,k} = \frac{-(n + k)(n^2 + n + 2k - 4)}{n - k + 2}\int_{\widetilde{\Sigma}}\mathcal{L}\dot{t}|_{t = \varepsilon}dV_{h_0}dV_{\bo}.
	\end{equation}
	But this is equal to
	\begin{equation*}
		\dot{\mathcal{E}}_{n,k} = \frac{-(n + k)(n^2 + n + 2k - 4)}{n - k + 2}\vol_{\bo}(\spheres^{k - 1})\int_{\Sigma}\pi_0(\mathcal{L}\dot{t}|_{t = \varepsilon})dV_{h_0}.
	\end{equation*}
	Since $\mathcal{L} \in \mathcal{H}_1(\widetilde{\Sigma})$ unless $n$ is odd and $k = 1$, the result now follows from Theorem \ref{tvarthm} and Lemma \ref{oneformlem}.

	Now, for odd $n$ and $n - k + 2 > 0$, by parity considerations, $\mathcal{L} \equiv 0$ unless $k = 1$.

	Continuing to take $n$ odd, and assuming $k > 1$ for now, it is straightforward to see that the constant term in (\ref{middleq1}) is
	\begin{equation*}
		\frac{(n + k)(n^2 + 2n + k - 2)}{n - k + 2}\int_{\widetilde{\Sigma}}(u_{n + 2} + \mathcal{A})\dot{t}|_{t = \varepsilon}dV_{h_0}dV_{\bo},
	\end{equation*}
	where $\mathcal{A}$ is some function in $\mathcal{Y}_{n + 1}(\widetilde{\Sigma})$. Since $\dot{t}|_{t = \varepsilon} \in \mathcal{H}_1(\widetilde{\Sigma})$, 
	we may conclude that this is
	\begin{equation*}
		\frac{(n + k)(n^2 + 2n + k - 2)}{n - k + 2}\int_{\widetilde{\Sigma}}u_{n + 2}\dot{t}|_{t = \varepsilon}dV_{h_0}dV_{\bo}.
	\end{equation*}
	Also, the constant term in $\dot{\varepsilon}_s\vartheta|_{t = \varepsilon}$ is $-(n + k)\dot{\varepsilon}_s(u_{n + 2} + \mathcal{B})$ for some $\mathcal{B} \in 
	\mathcal{Y}_{n + 1}(\widetilde{\Sigma})$. Thus, we have
	\begin{align*}
		\dot{V}_{n,k} &= \frac{(n + k)(n^2 + n + 2k - 4)}{n - k + 2}\int_{\widetilde{\Sigma}}u_{n + 2}\dot{t}|_{t = \varepsilon}dV_{h_0}dV_{\bo}\\
		&= \frac{(n + k)(n^2 + n + 2k - 4)}{n - k + 2}\vol_{\bo}(\spheres^{k - 1})\int_{\Sigma}\pi_0(u_{n + 2}\dot{t}|_{t = \varepsilon})dV_{h_0}
	\end{align*}
	Now, $\mathcal{L} \in \mathcal{H}_1(\widetilde{\Sigma})$, and $u_{n + 2}$ may be written $u_{n + 2} = \tilde{u}_{n + 2} + \mathcal{A}$ for $\tilde{u}_{n + 2} \in 
	\mathcal{H}_1(\widetilde{\Sigma})$ and
	some function $\mathcal{A}$ that is even on the $(k - 1)$-sphere. Recalling (\ref{tderiveq}), the result now follows by Lemma \ref{oneformlem}.

	Suppose now that $k = 1$. When $n$ is even, the same proof above goes through and we have the same result.

	When $n$ is odd and $k=1$, we no longer have $\mathcal{L}=0$, so in principle it might show up in the variation formula. 
	It still holds that $\dot{t}|_{t=\epsilon}$ and $\mathcal{L}$ are odd, but $\mathcal{L}$ is now even by Corollary \ref{negcol}, 
	and $u_{n+2}$ has an odd component on the $0$-sphere. From \eqref{usexpeq} and \eqref{allexpeq}, we see that the constant term in 
$(\dot{u}\partial_tu - u\partial_t\dot{u})\vartheta$ is 
\[-\big( 
(2n+1)\dot{t} \mathcal{L} + (n^2+2n-1)\dot{t}u_{n+2}
\big) +\mathcal{O}_1,\]
where $\mathcal{O}_1$ is some odd function on the $0$-sphere. Now, $\mathcal{O}_1$ and $\dot{t}\mathcal{L}$ integrate to zero, and we have the constant term in \eqref{middleq1} as 
\[\int_{\widetilde{\Sigma}}\big( 
(n^2+2n-1)\dot{t}u_{n+2}
\big)|_{t=\epsilon} dV_{h_0}dV_{\mathring{b}}.\]
Also, recalling that $\dot{\varepsilon}_s = -\dot{t}|_{t=\epsilon}$, we can see that the constant term in $\dot{\varepsilon}_s\vartheta|_{t = \varepsilon}$ is $(n + 1)\dot{t}|_{t=\epsilon}(u_{n + 2}) + \mathcal{O}_2$, where $\mathcal{O}_2$ is some odd function on the $0$-sphere. It follows from \eqref{doteq}, and the fact that $\mathcal{O}_2$ integrates to zero, that 
\begin{equation*}
\begin{split}
    \dot{V}_{n,1} &= \int_{\widetilde{\Sigma}}\big( 
(n^2+n-2)\dot{t}u_{n+2}
\big)|_{t=\epsilon} dV_{h_0}dV_{\mathring{b}}\\ &= 2\int_{\Sigma}\big( 
(n^2+n-2)\dot{t}u_{n+2}
\big)|_{t=\epsilon} dV_{h_0}.
\end{split}\end{equation*}
The result follows.
\end{proof}
Suppose $n - k + 2 < 0$ with $n$ even, and we wish to consider the change of $\mathcal{E}$ under variations of $\Sigma$. Since we use only formal solutions, we need not appeal to section \ref{smoothussec} for the smoothness
of $\dot{u}_s$; rather, we merely note that it is defined as a finite expansion in $t$ with coefficients that vary smoothly as $\Sigma$ varies. The above proof then still goes through and we obtain the same variation
formula.

\section{Calculations}\label{calcsec}
In this section, we compute the previously defined objects in several settings of interest.

\subsection{Equatorial Spheres}
It is possible to compute the renormalized volume or energy (depending on parity) for the equatorial sphere $\spheres^n$ inside any $\spheres^{n + k}$. We get results for any $n$ and $k < n$, including the critical
case, as the logarithm term vanishes here. Of course, the volumes in the nonnegative-curvature, odd-$n$ case are not unique; see \cite{ms91}. 
As mentioned in \cite{ms91}, it is straightforward via stereographic projection from a point on $\spheres^n$ to see that $\spheres^{n + k} \setminus \spheres^n$ is conformal to $\mathbb{R}^{n + k} \setminus \mathbb{R}^n$
with the flat metric. Introducing cylindrical coordinates around $\mathbb{R}^n$, this metric may be written $g_E = |dx|^2 + d\rho^2 + \rho^2\bo$, where $\bo$ is the round metric on $\spheres^{k - 1}$. This metric is conformally related
to $g_1 = \frac{|dx|^2 + d\rho^2}{\rho^2} + \bo$, which is the product metric on $\mathbb{H}^{n + 1} \times \spheres^{k - 1}$; here $\mathbb{H}^{n + 1}$ is hyperbolic space. 
The product metric is complete of constant scalar curvature, and is thus the (or, for $k + n - 2 \leq 0$, \emph{a}) singular Yamabe metric.
Now, the hyperbolic factor is of course isometric to the ball model of hyperbolic space,
$\mathbb{B}^{n + 1}$ with the metric $\frac{4|dx|^2}{(1-|x|^2)^2}$. We follow \cite{g99} in writing $r = \frac{1-|x|}{1+|x|}$;
we obtain the metric
$g^+ = \frac{1}{r^2}\left( \left(\frac{1-r^2}{2}\right)^2\mathring{h} + dr^2 \right) + \bo$; here $\mathring{h}$ is the round metric on $\spheres^{n}$.

Now, $g^+$ has the volume form
\begin{equation*}
	dV_{g^+} = \sqrt{\left( \frac{1}{r^2}\left( \frac{1 - r^2}{2r} \right)^{2n} \right)}dV_{\bo}dV_{\mathring{h}}dr.
\end{equation*}
Integrating this over the region $\left\{ r > \varepsilon \right\}$, we have
\begin{equation*}
	\vol_{g^+}\left\{ r > \varepsilon \right\} = 2^{-n}\vol_{\bo}(\spheres^{k - 1})\vol_{\mathring{h}}(\spheres^n)\int_{\varepsilon}^1r^{-n-1}(1 - r^2)^ndr.
\end{equation*}
Following Graham's argument in \cite{g99}, this gives us
\begin{equation*}
	\mathcal{E}_{n,k} = (-1)^{\frac{n}{2}}\frac{4\pi^{\frac{n + k}{2}}}{\Gamma\left(\frac{n}{2} + 1\right)\Gamma\left( \frac{k}{2} \right)}
\end{equation*}
for $n$ even, and
\begin{equation*}
	V_{n,k} = (-1)^{\frac{n + 1}{2}}\frac{2\pi^{1 + \frac{n + k}{2}}}{\Gamma\left( \frac{n}{2} + 1\right)\Gamma\left( \frac{k}{2} \right)}
\end{equation*}
for $n$ odd.

In particular, $\mathcal{E}_{2,2} = -4\pi^2$ and $V_{1,2} = -4\pi^2$. The latter is the renormalized volume associated to the equatorial unknot in $\spheres^3$.

In all cases, the quantities are critical: the conformal factor $u$ has no logarithmic term, so for $n$ even, the energy is critical. For odd $n$, the metric $\rho^2g_1$ is a smooth metric conformal to the round metric,
and relative to this metric, $u = \rho$, for which $u_{n + 2} = 0$.

\subsection{The case $n = 2$}

We compute explicitly the energies $\mathcal{E}_{2,k}$ for $k \neq 4$.

\subsubsection{Some metric computations}
We begin with some metric computations that hold in any dimension. We work locally in some Fermi coordinate system on $\Sigma$, and choose a coordinate system $\left\{ \omega^{\mu} \right\}$ as well on some part of the
normal sphere bundle. Recall that we let $c_a = \partial_at = \frac{y^a}{t}$, where the $y^a$ are the normal smooth Fermi coordinates. Recalling that $c_a$ is independent of
$t$ in cylindrical coordinates, we observe that we have
\begin{align*}
	\partial_t &= \sum_{a = 1}^k c_a\partial_a\\
	\partial_{\mu} &= t\sum_{a = 1}^kc_{a,\mu}\partial_a,
\end{align*}
where $c_{a,\mu} = \partial_{\mu}c_a$.

Now, we note that, although the pullback metric $\beta^*g$ is degenerate at $\widetilde{\Sigma}$, the pullback $\beta^*\riem$ of the Riemann curvature tensor extends to a smooth tensor up to $\widetilde{\Sigma}$.
To see this, write the metric tensor $\beta^*g$ in cylindrical coordinates (which are smooth coordinates on $\widetilde{M}$) according to (\ref{gpolar}). Recall that the curvature tensor is given by
\begin{align*}
	R_{IJKL} &= \frac{1}{2}\left( \partial^2_{IK}g_{JL} + \partial_{JL}^2g_{IK}
	- \partial^2_{JK}g_{IL} - \partial^2_{IL}g_{JK}\right)\\
	&\quad + g^{PQ}(\Gamma_{IJP}\Gamma_{KLQ} - \Gamma_{ILP}\Gamma_{KJQ}),
\end{align*}
where $\Gamma_{IJK} = \frac{1}{2}(\partial_Ig_{JK} + \partial_{J}g_{IK}
- \partial_kg_{IJ})$. Now, the only terms that might contribute to singularity at the face $\widetilde{\Sigma}$ are terms involving $g^{\mu\nu}$, i.e., $P = \mu$ and $Q = \nu$; these terms have expressions of the form
$t^{-2}b^{\mu\nu}$, with $b$ smooth. But these terms are always multiplied by a pair of Christoffel symbols of the form $\Gamma_{IJ\mu}$, and from (\ref{gpolar}), such terms are always $O(t)$. Thus, the curvature tensor
is smooth up to $\widetilde{\Sigma}$.

Thus, we may evaluate
\begin{align*}
	R(\partial_{\mu},\partial_t,\partial_t,\partial_{\nu}) &= R_{\mu tt\nu}\\
	&= \sum_{1\leq a,b,c,d\leq k}t^2c_{a,\mu}c_{b,\nu}c_cc_dR_{acdb}.
\end{align*}
Now, $R_{acdb}$ is a smooth function ``downstairs'' on $M$, while the $c$ coefficients are all smooth on $\widetilde{M}$. Thus, $R_{\mu tt\nu}$ is $O(t^2)$, and we may define a smooth
tensor field
\begin{equation}\label{req}
	\mfr_{\mu\nu} := \frac{1}{t^2}R_{\mu tt\nu} = \sum_{1 \leq a,b,c,d \leq k}c_{a,\mu}c_{b,\nu}c_cc_dR_{acdb}
\end{equation}
on $\widetilde{M}$. Next, we note that, away from $\widetilde{\Sigma}$, we have $\nabla_{\partial_t}\partial_{\mu} = \Gamma_{t\mu}^I\partial_I = \frac{1}{t}\partial_{\mu} + O^I(1)\partial_I$, where the notation
$O^I(f)\partial_I$ means the coefficients of the vector field are $O(f)$. Thus, we have
\begin{equation}\label{peq}
	\frac{1}{t}R(\partial_{\mu},\partial_t,\partial_t,\nabla_{t}\partial_{\nu}) = \frac{1}{t^2}R_{\mu tt\nu} + O(t) = \mfr_{\mu\nu} + O(t),
\end{equation}
which similarly extends smoothly to $\widetilde{\Sigma}$.

We write $b = \bo + tb^{(1)} + t^2b^{(2)} + \cdots$. Now, $t^2b_{\mu\nu} = \langle\partial_{\mu},\partial_{\nu}\rangle$. By differentiating twice in $t$, we get
\begin{equation*}
	2b_{\mu\nu} + 4tb_{\mu\nu}' + t^2b_{\mu\nu}'' = \langle D_t^2\partial_{\mu},\partial_{\nu}\rangle + \langle\partial_{\mu},D_t^2\partial_{\nu}\rangle + 2\langle D_t\partial_{\mu},D_t\partial_{\nu}\rangle,
\end{equation*}
where primes denote $t$-derivatives and $D_t$ the connection along the radial geodesic. Another differentiation gives
\begin{equation}\label{thirddiff}
	\begin{split}
		6b_{\mu\nu}' + 6tb_{\mu\nu}'' + t^2b_{\mu\nu}''' &= D_t\left( \langle D_t^2\partial_{\mu},\partial_{\nu}\rangle + \langle\partial_{\mu},D_t^2\partial_{\nu}\rangle \right)\\
		&\quad+ 2\left( \langle D_t^2\partial_{\mu}D_t\partial_{\nu}\rangle + \langle D_t\partial_{\mu},D_t^2\partial_{\nu}\rangle \right).
	\end{split}
\end{equation}
Now, away from $t = 0$ (where the metric is degenerate), $\partial_{\mu}$ is a Jacobi field, and so we have
\begin{align*}
	\langle D_t^2\partial_{\mu},\partial_{\nu}\rangle &= -R(\partial_{\mu},\partial_t,\partial_t,\partial_{\nu})\\
	\langle D_t^2\partial_{\mu},D_t\partial_{\nu}\rangle &= -R(\partial_{\mu},\partial_t,\partial_t,D_t\partial_{\nu}\rangle.
\end{align*}
Plugging these into (\ref{thirddiff}) and using (\ref{req}) and (\ref{peq}), we have
\begin{equation}\label{thirddifftwice}
	6b_{\mu\nu}' + 6tb_{\mu\nu}'' + t^2b_{\mu\nu}''' = -2(t^2\mfr_{\mu\nu})'' - 4t\mfr_{\mu\nu} + O(t) = O(t)
\end{equation}
Differentiating (\ref{thirddifftwice}), we see that
\begin{align*}
	12b_{\mu\nu}'' + 8tb_{\mu\nu}''' &= -2(t^2\mfr_{\mu\nu})'' - 4\mfr_{\mu\nu} + O(t)\\
	&= -8\sum_{1\leq a,b,c,d \leq k}c_{a,\mu}c_{b,\nu}c_cc_dR_{acdb} + O(t).
\end{align*}
It follows that $b_{\mu\nu}^{(1)} \equiv 0$ and
\begin{equation*}
	b_{\mu\nu}^{(2)} = -\frac{1}{3}\sum_{1 \leq a,b,c,d \leq k}c_{a,\mu}c_{b,\nu}c_cc_dR_{acdb}.
\end{equation*}
In particular, if $k = 2$, then $b^{(2)} = -\frac{1}{3}\Sec(\partial_{a^1},\partial_{a^2})$.

We now turn to the determinant. We write
\begin{align*}
	\det h \det \alpha &= (\det h_0 \det \bo)(1 + t\gamma_1 + t^2\gamma_2) + O(t^3),\\
	\intertext{where}
	\gamma_1 &= \tr_{h_0}h^{(1)}\\
	2\gamma_2 &= 2\tr_{h_0}h^{(2)} + (\tr_{h_0}h^{(1)})^2 - \tr_{h_0}(h^{(1)}h_0^{-1}h^{(1)})\\
	&\quad+ 2\tr_{\bo}b^{(2)} - 2\tr_{\bo}(a_0^Th_0^{-1}a_0).
\end{align*}
We denote the vector-valued second fundamental form by $\mfl$: so for $X,Y \in T_p\Sigma$, we have $\mfl(X,Y) = (\nabla^g_XY)^{\perp}$. We let $\mfh$ be the mean curvature vector
$\mfh = h_0^{ij}\mfl^a_{ij}\partial_a$. We will work now exclusively on $\widetilde{\Sigma}$, so to simplify notation and allow the Einstein notation, we introduce
$c^a = g^{ab}c_b = c_a$ and $c^{a}_{\mu} = g^{ab}c_{b,\mu}$. We have, along $\Sigma$,
\begin{equation*}
	\nabla_i\partial_a = -\mfl_{ai}{}^l\partial_l + \Gamma_{ia}^c\partial_c.
\end{equation*}
Using a Jacobi field argument as for $b$ above, we find
\begin{align*}
	h^{(1)}_{ij} &= -2c_a\mfl^a{}_{ij}\\
	h_{ij}^{(2)} &= c^ac^b\left( -R_{iabj} + \mfl_{ail}\mfl_{bj}^l + \Gamma_{ia}^c\Gamma_{jbc}\right)\\
	a_{i\nu}^{(0)} &= c^ac^{b}_{\nu}\Gamma_{iab}.
\end{align*}
Thus,
\begin{align*}
	\tr_{h_0}h^{(1)} &= -2c_a\tr_{h_0}\mfl^a\\
	\tr_{h_0}h^{(2)} &= -c^ac^bh_0^{ij}R_{iabj} + c_{a}c_b\langle \mfl^a,\mfl^b\rangle_{h_0} + c^ac^bh_0^{ij}\Gamma_{ia}^c\Gamma_{jbc},
\end{align*}
so
\begin{equation*}
	\gamma_1 = -2c_a\tr_{h_0}\mfl^a.
\end{equation*}
Now,
\begin{align*}
	c^ac^bR_{ab} &= c^ac^bh_0^{ij}R_{iabj} + \bo^{\mu\nu}c^{a}_{\mu}c^{b}_{\nu}c^cc^dR_{acdb}\\
	\intertext{and}
	\tr_{\bo}b^{(2)} &= -\frac{1}{3}\bo^{\mu\nu}c^{a}_{\mu}c^{b}_{\nu}c^cc^dR_{acdb}.
\end{align*}
So
\begin{equation}\label{firstdiff}
	\begin{split}
	2\gamma_2 - \gamma_1^2 &= -2c^ac^bR_{ab} + \frac{4}{3}\langle dc^a,dc^b\rangle_{\bo}c^cc^dR_{acdb} - 2c_ac_b\langle\mfl^a,\mfl^b\rangle_{h_0}\\
	&\quad+2h_0^{ij}c^ac^b\Gamma_{ia}^c\Gamma_{jbc} - 2h_0^{ij}c^ac^b\langle dc^c,dc^d\rangle_{\bo}\Gamma_{iac}\Gamma_{jbd}.
	\end{split}
\end{equation}
Now, from (\ref{dcaeq}) we conclude that the second line of (\ref{firstdiff}) vanishes
and
\begin{equation*}
	\langle dc^a,dc^b\rangle_{\bo}c^cc^dR_{acdb} = (g^{ab} - c^ac^b)c^cc^dR_{acdb} = g^{ab}c_cc_dR_{acdb},
\end{equation*}
where the last equality follows from the equation $c^ac^bc^cc^dR_{acdb} = R_{tttt} = 0$. Thus,
\begin{equation*}
	2\gamma_2 - \gamma_1^2 = -2c^ac^bR_{ab} + \frac{4}{3}g^{ab}c^cc^dR_{acdb} - 2c_ac_b\langle\mfl^a,\mfl^b\rangle_{h_0}.
\end{equation*}
Now, $c^a \in \mathcal{H}_1(\widetilde{\Sigma})$, since it is the restriction of the harmonic polynomial $y^a$ to the unit sphere. We recall from the proof of Lemma \ref{oneformlem} that
$c^ac^b - \frac{1}{k}g^{ab} \in \mathcal{H}_2(\widetilde{\Sigma})$. This implies that
\begin{equation}\label{gammagammaeq}
	\pi_0(2\gamma_2 - \gamma_1^2) = -\frac{2}{k}g^{ab}R_{ab} + \frac{4}{3k}g^{ad}g^{bc}R_{abcd} - \frac{2}{k}|\mfl|^2.
\end{equation}
We also have
\begin{equation}\label{gamma1eq}
	\pi_0(\gamma_1^2) = \frac{4}{k}|\mfh|^2.
\end{equation}
Also note that
\begin{equation}\label{pi0dgamma}
	\pi_0(|d\gamma_1|_{\bo}^2) = (k - 1)\pi_0(\gamma_1^2).
\end{equation}
This is immediate from the fact that, for $F \in \mathcal{H}_1(\widetilde{\Sigma})$,
\begin{equation*}
	\Delta_{\bo}F^2 = 2F\Delta_{\bo}F + 2|dF|_{\bo}^2 = -2(k - 1)F^2 + 2|dF|_{\bo}^2,
\end{equation*}
and that $\pi_0(\Delta_{\bo}F^2) = 0$.

\subsubsection{The Energies}
For $n=2$, by writing $u = tv_0 + t^2v_1 + t^3v_2$, we have  
\begin{equation}\label{gradnorm}
    |du|^2_g = v_0^2 + 4tv_0v_1 + t^2(4v_1^2+6v_0v_2) + t^2|dv_1|_{\mathring{b}}^2 + O(t^3),
\end{equation}
\begin{equation}\label{arr}
    R_g u^2 = t^2v_0^2R_g|_\Sigma + O(t^3).
\end{equation}

For $m\geq 1$, we have 
\begin{equation}\label{tee}
    \Delta_g t^m =m(m-1)t^{m-2} + m (\dd_t \log \sqrt{\det g})t^{m-1},
\end{equation}
\begin{equation}\label{vee}
    t^2\Delta_g v_{m-1} = \Delta_{\mathring{b}}v_{m-1} + \langle d\log\sqrt{\det h}, dv_{m-1}\rangle_{\mathring{b}}+O(t^2).
    %=\Delta_{\mathring{b}}v_{m-1} + \frac{t}{2}\langle d\gamma_1, dv_{m-1}\rangle_{\mathring{b}}+O(t^2)
\end{equation}

Since 
\[\Delta_g (t^mv_{m-1}) = t^m\Delta_g v_{m-1} + v_{m-1}\Delta_g t^m,\]
by putting \eqref{tee} and \eqref{vee} into 
\begin{equation*}
u\Delta_g u = (tv_0+t^2v_1+t^3v_2)(tv_0\Delta_g + t^2\Delta_g v_1 + t^2v_1\Delta_g + t^3\Delta_g v_2 + t^3v_2\Delta_g),
\end{equation*}
we have 
\begin{align}
u\Delta_g u =& v_0^2 (t\dd_t\log\sqrt{\det g}) + 2tv_0v_1 + tv_0\Delta_{\mathring{b}}v_1 + 3tv_0v_1(t\dd_t\log\sqrt{\det g})\nonumber\\
&\quad+ 6t^2v_0v_2  + t^2 v_0\Delta_{\mathring{b}} v_2 + (4v_0v_2t^2 + 2v_1t^2)(t\dd_t\log\sqrt{\det g})\label{lapp}\\ 
&\quad+ 2t^2v_1^2 + t^2v_1\Delta_{\mathring{b}}v_1 +\langle d\log\sqrt{\det h}, dv_{1}\rangle_{\mathring{b}}+O(t^3).\nonumber
\end{align}
Now, since 
\[\det g = t^{2k-2} \det h \det \alpha =t^{2k-2} (\det h_0 \det \mathring{b})(1+t\gamma_1 +t^2\gamma_2),\]
we have 
\begin{equation}\label{logdet}
	\begin{split}
    		t\dd_t \log \sqrt{\det g} &= \frac{t}{2}\dd_t \log\det g\\
		&= (k-1) + \frac{\gamma_1}{2}t + \left(\gamma_2-\frac{\gamma_1^2}{2}\right)t^2 +O(t^3).
	\end{split}
\end{equation}
Putting \eqref{logdet} into \eqref{lapp} while noting that $\sqrt{\det h} = \sqrt{\det h_0} ( 1+\frac{\gamma_1}{2}t) + O(t^2)$ gives
\begin{align}
    u\Delta_g u =& v_0^2(k-1) + t\left(v_0\Delta_{\mathring{b}} v_1 + v_0v_1 (3k-1) + \frac{v_0^2}{2}\gamma_1\right)\nonumber\\
    &\quad + t^2\left(v_0\Delta_{\mathring{b}}v_2 + v_0v_2(4k+2) + 2kv_1^2 + \frac{3}{2}v_0v_1\gamma_1 + \left(\gamma_2-\frac{\gamma_1^2}{2}\right)\right.\label{truelap}\\
    &\quad+ v_1\Delta_{\mathring{b}}v_1 + \frac{v_0}{2}\langle d\gamma_1,dv_1\rangle_{\mathring{b}}\bigg) + O(t^3)\nonumber
\end{align}
Now upon putting \eqref{gradnorm}, \eqref{arr} and \eqref{truelap} into $L[u]$, the equation $2L[u]=0$ becomes
\begin{equation*}
	\begin{split}
		O(t^3) &= 4 - k - v_0^2(4 - k)\\
		&\quad + t(2v_0\Delta_{\bo}v_1 + 2v_0(k - 5)v_1 + v_0^2\gamma_1)\\
		&\quad + t^2\biggl( 2v_0\Delta_{\bo}v_2 + 2(k-4)v_0v_2 - 8v_1^2 + 3v_0v_1\gamma_1 + 2v_1\Delta_{\bo}v_1\\
		&\quad + \left.\langle dv_1,d(v_0\gamma_1 - (k+2)v_1)\rangle_{\bo} + v_0^2(2\gamma_2-\gamma_1^2) + \frac{v_0^2}{k + 1}R_g\right).
	\end{split}
\end{equation*}
Solving this for $v_0,v_1$, and $v_2$ allows us to compute the $\frac{1}{t}$ coefficient in the volume function $\vartheta = u^{-2-k}\sqrt{\frac{\det h \det \alpha}{\det h_0 \det \bo}}$, namely
\begin{equation}\label{thetaeq}
	\begin{split}
		\vartheta_2 =& \frac{1}{8v_0^{k + 4}}(v_0^2(4\gamma_2 - 2\gamma_1^2) + v_0^2\gamma_1^2 - 4(k+2)v_0(\gamma_1v_1 + 2v_2)\\
		&+ 4(k + 2)(k + 3)v_1^2).
	\end{split}
\end{equation}
We have immediately
\begin{align*}
	v_0 &= 1,\\
	v_1 = \frac{v_0}{8}\gamma_1 &= -\frac{1}{4}c_a\tr_{h_0}\mfl^a.
\end{align*}
Now, setting
\begin{equation*}
	2v_0\Delta_{\bo}v_2 + 2(k - 4)v_0v_2 + \frac{(9 - k)v_0^2}{32}\gamma_1^2 + \frac{(6 - k)v_0^2}{64}|d\gamma_1|_{\bo}^2 + v_0^2(2\gamma_2 - \gamma_1^2) + \frac{v_0^2}{k + 1}R_g
\end{equation*}
to zero, solving it for the zeroth-degree term of $v_2$, and invoking (\ref{pi0dgamma}), we have
\begin{equation*}
	2\pi_0(v_2) = \frac{1}{4 - k}\left( \frac{12 + 5k - k^2}{64}\pi_0(\gamma_1^2) + \pi_0(2\gamma_2 - \gamma_1^2) + \frac{\pi_0(R_g)}{k + 1} \right).
\end{equation*}
Thus,
\begin{align*}
	\vartheta_2 &= \frac{1}{8}\bigg( 4(k+2)(k + 3)\frac{1}{64}\gamma_1^2 - 4(k+2)\frac{1}{8}\gamma_1^2 + \gamma_1^2\\
	&\qquad\qquad+ 2(2\gamma_2 - \gamma_1^2) - 4(k+2)(2v_2) \bigg),
	\intertext{so}
	\pi_0(\vartheta_2) &= \frac{1}{8(k - 4)}\left( \frac{k(10 - k)}{4}\pi_0(\gamma_1^2) + 6k\pi_0(2\gamma_2 - \gamma_1^2)
	+ \frac{4(k + 2)}{(k + 1)}\pi_0(R_g)\right).
\end{align*}
By (\ref{gammagammaeq}) and (\ref{gamma1eq}), this becomes
\begin{equation}\label{pi0t2eq}
	\begin{split}
		\pi_0(\vartheta_2) &= \frac{1}{8(4 - k)}\bigg( (k-10)|\mfh|^2\\
		&\qquad+ 12\left( g^{ab}R_{ab} + |\mfl|^2 - \frac{2}{3}R_{ab}{}^{ba} \right) - \frac{4(k + 2)}{(k + 1)}\pi_0(R_g) \bigg).
	\end{split}
\end{equation}
Now, for $\Sigma^2 \hookrightarrow M^{2 + k}$, we have $|\mfl|^2 = |\mathring{\mfl}|^2 + \frac{1}{2}|\mfh|^2$.
Recall that the Schouten tensor $P$ on $M$ is defined by
\begin{equation*}
	P_{AB} = \frac{1}{k}\left( R_{AB} - \frac{R_g}{2(k + 1)}g_{AB} \right).
\end{equation*}
It follows that
\begin{equation*}
	g^{ab}R_{ab} = g^{AB}R_{AB} - g^{ij}R_{ij} = \frac{kR_g}{k + 1} - kh_0^{ij}P_{ij}.
\end{equation*}
A similar calculation, using also the Gauss-Codazzi equation, shows that
\begin{equation*}
	R_{ab}{}^{ba} = \frac{(k - 1)R_g}{k + 1} - 2kh_0^{jk}P_{jk} + R_{h_0} - \frac{1}{2}|\mfh|^2 + |\mathring{\mfl}|^2.
\end{equation*}
Putting all this together, we conclude that
\begin{equation*}
	\mathcal{E}_{2,k} = \frac{\vol_{\bo}(\spheres^{k - 1})}{8(4 - k)}\int_{\Sigma}\left( k(|\mfh|^2 + 4\tr_{h_0}(P|_{T\Sigma})) + 4|\mathring{\mfl}|^2 - 8R_{h_0} \right)dV_{h_0}.
\end{equation*}
The first term gives a multiple of the Willmore energy, and in particular a multiple of the Graham-Witten anomaly \cite{gw99}. The second term is a pointwise invariant, and of course the third gives
a multiple of the Euler characteristic. One can see that, although this is not defined for $k = 4$, the integrand in that case is in fact the pointwise conformal invariant guaranteed by
Theorem \ref{expandthm}.
For the case $k = 1$, a computation from (\ref{pi0t2eq}) following \cite{g17} gives the simple form
\begin{equation*}
	\mathcal{E}_{2,1} = \frac{1}{2}\int_{\Sigma}(|\mathring{\mfl}|^2 - R_{h_0})dV_{h_0}.
\end{equation*}
This agrees with \cite{g17}, except that our value is twice his, since we also integrate $\vartheta_2$ over the 0-sphere.

\nocite{o70}
\bibliographystyle{amsalpha}
\bibliography{norm}
\end{document}